\newtheorem{thm}{Theorem}[section]
 \newtheorem{dfn}[thm]{Definition}
 \newtheorem{lem}[thm]{Lemma}
 \newtheorem{prp}[thm]{Proposition}
 \newtheorem{rem}[thm]{Remark}
 \newtheorem{cor}[thm]{Corollary}
 \newtheorem{exam}[thm]{Example}
 \def\P{\mathbb{P}}
 \def\A{\mathbb{A}}
 \def\G{\mathbb{G}}
 \def\R{\mathbb{R}}
 \def\Z{\mathbb{Z}}
 \def\Q{\mathbb{Q}}
 \def\O{\mathcal{O}}
 \def\J{\mathcal{J}}
 \def\L{\mathcal{L}}
 \def\M{\mathscr{M}}
 \def\H{\mathscr{H}}
 \def\K{\mathscr{K}}
 \def\F{\mathscr{F}}
 \def\C{\mathscr{C}}
 \def\CC{\mathbb{C}}
 \def\T{\mathscr{T}}
 \def\X{\mathscr{X}}
 \def\Cent{\mathop{\mathrm{cent}}\nolimits}
 \def\Val{\mathop{\mathrm{Val}}\nolimits}
 \def\mpd{\mathop{\mathrm{mpd}}\nolimits}
 \def\cores{\mathop{\mathrm{cor}}\nolimits}
 \def\Tan{\mathop{\mathrm{Tan}}\nolimits}
 \def\Zar{\mathop{\mathrm{Zar}}\nolimits}
 \def\pr{\mathop{\mathrm{pr}}\nolimits}
 \def\CH{\mathop{\mathrm{CH}}\nolimits}
 \def\Gr{\mathop{\mathrm{Gr}}\nolimits}
 \def\trdeg{\mathop{\mathrm{tr.deg}}\nolimits}
 \def\res{\mathop{\mathrm{res}}\nolimits}
 \def\Im{\mathop{\mathrm{Im}}\nolimits}
 \def\Ker{\mathop{\mathrm{Ker}}\nolimits}
 \def\new{\mathop{\mathrm{new}}\nolimits}
 \def\sing{\mathop{\mathrm{sing}}\nolimits}
 \def\Sk{\mathop{\mathrm{Sk}}\nolimits}
 \def\ad{\mathop{\mathrm{ad}}\nolimits}
 \def\Trop{\mathop{\mathrm{trop}}\nolimits}
 \def\Ber{\mathop{\mathrm{Ber}}\nolimits}
 \def\relint{\mathop{\mathrm{rel.int}}\nolimits}
 \def\height{\mathop{\mathrm{ht}}\nolimits}
 \def\Span{\mathop{\mathrm{Span}}\nolimits}
 \def\rank{\mathop{\mathrm{rank}}\nolimits}
 \def\Frac{\mathop{\mathrm{Frac}}\nolimits}
 \def\supp{\mathop{\mathrm{supp}}\nolimits}
 \def\ZR{\mathop{\mathrm{ZR}}\nolimits}
 \def\Spv{\mathop{\mathrm{Spv}}\nolimits}
 \def\Spa{\mathop{\mathrm{Spa}}\nolimits}
 \def\Spec{\mathop{\mathrm{Spec}}\nolimits}
 \def\Hom{\mathop{\mathrm{Hom}}\nolimits}
 \def\id{\mathop{\mathrm{id}}\nolimits}
 \def\Trop{\mathop{\mathrm{Trop}}\nolimits}
 \def\ini{\mathop{\mathrm{in}}\nolimits}
 \def\alg{\mathop{\mathrm{alg}}\nolimits}
 \def\Gr{\mathop{\mathrm{Gr}}\nolimits}
\numberwithin{equation}{section}
 \numberwithin{figure}{section}
 \DeclareRobustCommand{\genericinterval}[2]{%
  \@ifstar{\genericinterval@star{#1}{#2}}{\genericinterval@nostar{#1}{#2}}}
   \newcommand{\genericinterval@star}[4]{\mathopen{}\mathclose{\left#1#3,#4\right#2}}
    \newcommand{\genericinterval@nostar}[4]{\mathopen{#1}#3,#4\mathclose{#2}}
\begin{document}
 \title[On tropical cohomology]{On tropical cohomology of smooth algebraic varieties}
 \author{Ryota Mikami}
 \address{Institute of Mathematics, Academia Sinica, Astronomy-Mathematics Building, No.\ 1, Sec.\ 4, Roosevelt Road, Taipei 10617, Taiwan.}
 \email{ryotamikamimath467jhoiv9dhk3@gmail.com}
 \subjclass[2020]{Primary 14T20; Secondary 14F43; Tertiary 14G22}
 \keywords{tropical geometry, non-archiemedean geometry, cycle class maps, Chow groups, Milnor K-groups}
 \date{\today}
 
\begin{abstract}
 	In this paper, we give an explicit description of tropical cohomology of smooth algebraic varieties over trivially valued fields. 
 	We also construct ``monodromy weight'' spectral sequences for tropical cohomology of geometric strictly semi-stable reductions. 
 \end{abstract}
 
 \maketitle
 \setcounter{tocdepth}{1}
 \tableofcontents

\section{Introduction}
 Cohomology theories in tropical geometry were introduced by
 Gross-Siebert (\cite{GS10}) for 
 certain integral affine manifolds with singularities
 and 
 by 
 Itenberg-Katzarkov-Mikhalkin-Zharkov (\cite{IKMZ19}) 
 for tropical varieties. 
 The latter is called \emph{tropical cohomology}.
 Both are isomorphic to graded quotients of cohomology groups in  algebraic geometry in certain maximally degenerate cases.
 Hence they recover Hodge numbers, and are closely related to topological mirror symmetry.

 Tropical cohomology $H_{\Trop}^{p,q}$ was extended to algebaric varieties over complete valuation fields (\cite{Jel22}) 
 using tropical charts given by Chambert-Loir-Ducros \cite{CLD12}, Gubler \cite{Gub16}, and Jell \cite{Jel16}.
 The purpose of this paper is to study tropical cohomology of smooth algebraic varieties over trivially valued fields
 and of geometric strictly semi-stable reductions.
 
 Let $X$ be a smooth algebraic variety over a trivially valued field $K$.
 Liu defined a \textit{tropical cycle class map} 
 $$ \mathrm{CH}^p (X) \otimes_\Z \Q 
 \cong H^p (X_{\Zar}, \K_{M,X}^p \otimes \Q)
  \to H_{\Trop}^{p,p}(X;\Q)$$
 (\cite[Definition 3.8]{Liu20}),
 where $\K_{M,X}^p \otimes \Q$ is the Zariski sheaf of rational coefficents Milnor $K$-groups,
  and the isomorphism is Bloch's formula (\cite{Gab94}, \cite{Ros96}). 
 Inspired by his construction, 
 we introduce tropical analogs $K_T^p (-/K)$ (Definition \ref{tropical Milnor K}) of rational coefficents Milnor $K$-groups over $K$, \textit{tropical Milnor $K$-groups}.
 Similarly to Milnor $K$-groups (\cite{Gab94}, \cite{Ros96}), 
 the Zariski sheafification $\K_{T,X}^p$ have the \emph{Gersten resolution}  (Corollary \ref{corshftrK}) 
 \begin{align*}
 	0 \rightarrow \mathscr{K}_{T,X}^p 
 	& \rightarrow \bigoplus_{x \in X^{(0)}} i_{x *} K_T^p(k(x)/K)
 	\xrightarrow{d} \bigoplus_{x\in X^{(1)}} i_{x *}K_T^{p-1}(k(x)/K)   \\
 	  & 
 		 \xrightarrow{d} \bigoplus_{x\in X^{(2)}} i_{x *}K_T^{p-2}(k(x)/K) \xrightarrow{d}  \dots 
 		  \xrightarrow{d} \bigoplus_{x\in X^{(p)}} i_{x *}K_T^0(k(x)/K) \xrightarrow{d} 0 ,
 \end{align*}
 where
 $X^{(i)}$ is the set of points of $X$ of codimension $i$,
 the morphism
 $i_x \colon \Spec k(x) \rightarrow X$ ($x\in X$) is the natural one, 
 and $d$ are given by residue homomorphisms.
 In particular,  an analog of Bloch's formula holds:
 $$ H^p(X_{\Zar},\K_{T,X}^p) \cong \mathrm{CH}^p (X) \otimes_\Z \Q .$$

 The main theorem of this paper is the following.
 
 \begin{thm}[Theorem \ref{thm,trop,main}]\label{thm;trop;coho;Zariski;troK}
 For $p,q \geq 0$, we have
 $$H^{p,q}_{\Trop}(X;\Q) \cong H^q(X_{\Zar},\K_{T,X}^p).$$
 \end{thm}
 
 As a corollary, 
 we will give ``monodromy weight'' spectral sequences converging to
 tropical cohomology of geometric strictly semi-stable reductions.
 They and their constructions are similar to those for singular cohomology of complex algebraic varieties given by Steenbrink \cite{Ste76}.
 Let 
 $\pi \colon Y \to C$ be a flat, generically smooth, projective morphism from a smooth algebraic variety $Y$ to a smooth algebraic curve $C$ over  $K$.
 Let $c \in C$ be a closed point.
 We put $\hat{K}_c$ the discretely valued fraction field of the formal completion $\hat{\O_{C,c}}$ of the local ring $\O_{C,c}$,
 and 
 $Y_{\hat{K}_c} := Y \times_C \Spec \hat{K}_c$
 the base change.
 We assume that $Y_c:= \pi^{-1}(c)$ is a simple normal crossing divisor.
 (In equi-characteristic $0$, there are always strictly semi-stable reductions after base changes (\cite[Chapter II]{KKMSD73}).)
 We put $Y_{c,i} $ ($i \in I$) the irreducible components of $Y_c$, 
 and for $J \subset I$, we put $Y_{c,J} :=\bigcap_{i \in J} Y_{c,i}$.
 
 \begin{cor}\label{introduction spectral sequence for ss reduction}
 	For $r \geq 0$, we have a natural spectral sequence 
 	$$E_1^{p,q}= \bigoplus_{ \max \{0,p\} \leq u  }
   \bigoplus_{\substack{ J \subset I \\ \# J = -p + 2u+1 } }
 H^{r+p-u, p+q -u}_{\Trop} (Y_{c,J} ; \Q)  \Rightarrow H^{r,p+q}_{\Trop } (Y_{\hat{K}_c} ; \Q).$$
 \end{cor}
 
 \begin{rem}
 	We assume $K =\mathbb{C}$, 
 	and that for $J \subset I$,
 	we have 
 	\begin{align*}
 	H^{p, q }_{\Trop} (Y_{c,J} ; \Q ) & =0  \quad (p \neq q) ,\\
 	H_{\sing}^{2p+1}(Y_{c,J}(\mathbb{C} ); \Q  )& =0 \quad  (p \geq 0),  \\
 	 H_{\sing}^{2p}(Y_{c,J} (\mathbb{C} ); \Q )  & \cong \CH^p (Y_{c,J}) \otimes \Q  \quad 
 	(p \geq 0).
 	\end{align*}
 	(These hold when e.g., $Y_{c,J}$ is a smooth projective toric variety, or the wonderful compactification of the complement of a hyperplane arrangement (\cite{CP95}) (see Subsection \ref{subsec example})).
 	Then by Theorem \ref{thm;trop;coho;Zariski;troK} and Corollary \ref{introduction spectral sequence for ss reduction}, we have an isomorphism 
 	$$ H^{p, q }_{\Trop} (Y_{\hat{K}_c} ; \Q) \cong \Gr^W_{2p} H_{\sing}^{p+q}(Y_{\infty} (\mathbb{C} ); \Q ),$$
 	where the right-hand side is the weight-$2p$ graded quotient of the limit mixed Hodge structure (\cite[Theorem 11.22]{PS08}) of 
 	the $(p+q)$-th singular cohomology of fibers of $\pi$ at $c \in C$.   
 	Moreover, since, in this case, the right-hand side is of $(p,p)$-type, 
  by \cite[Corollary 11.25]{PS08}, 
  we also have 
 	$$ \dim H^{p, q }_{\Trop} (Y_{\hat{K}_c} ; \Q) \cong h^{p,q} (\pi^{-1}(c'))$$
 	for a closed point $c' \in C$ in general position. 

 	As mentioned in the beginning of introduction, 
 	this kind of results were previously 
 	proved 
 by Gross-Siebert (\cite{GS10}) for  certain (possibly non-semi-stable) toric degenerations of Calabi-Yau varieties using their cohomology of 
 certain tropical affine manifolds with singularities,
 and 
 by 
 Itenberg-Katzarkov-Mikhalkin-Zharkov (\cite{IKMZ19}) 
 for maximally degenerate smooth projective varieties 
 having smooth (i.e., locally matroidal) tropicalizations
 using tropical cohomology of the tropicalizations.
 Our result applies to any maximally degenerate strictly semi-stable reductions, 
 although it involves more complicated spaces,  Berkovich analytifications.
 \end{rem}
 
 There are some related works.
 	Theorem \ref{thm;trop;coho;Zariski;troK} and Corollary \ref{introduction spectral sequence for ss reduction} for curves are proved (in a more general form) by Jell-Wanner \cite{JW18} and Jell \cite{Jel19}. 
 	After the first version of the current paper, Amini-Piquerez \cite[Theorem 1.3]{AP21} proved a result similar to Theorem \ref{thm;trop;coho;Zariski;troK} for natural compactifications of unimodular tropical fans.
 
 Our proof of Theorem \ref{thm;trop;coho;Zariski;troK} is based on a theorem on coniveau spectral sequences of general cohomology theories, which has been developed by many mathematicians including Quillen \cite{Qui73}, Bloch-Ogus \cite{BO74}, Gabber \cite{Gab94}, Rost \cite{Ros96}, and Colliot-Th\'{e}l\`{e}ne-Hoobler-Kahn \cite{CTHK97}.
 This theorem was used to prove the Gersten resolutions and Bloch's formula  by Quillen \cite{Qui73} for algebraic $K$-groups, Gabber \cite{Gab94} and Rost \cite{Ros96} for Milnor $K$-groups. (Bloch's formula for $K_2$ was proved by Bloch \cite{Blo74}.)
 Using this theorem,
 we reduce Theorem \ref{thm;trop;coho;Zariski;troK} to $\A^1$-homotopy invariance of tropical cohomology over trivially valued fields.
 The main part of this paper is proof of $\A^1$-homotopy invariance.
 
 In this paper,
 we consider tropical  cohomology and tropical Milnor $K$-groups with $\Q$-coefficients.
 With $\Z$-coefficients, at least, we need to modify Section \ref{sec;trocoho;P^1;fiber}.
 
 The organization of this paper is as follows.
 In Section \ref{sec;not;term}, we fix several notations and terminologies.
 In Section \ref{sec;analytic;spaces}, we give a review on
 valuations and non-archimedean analytic spaces. 
 In Section \ref{sec polyhedra}, 
 we study homomorphisms to totally ordered abelian groups and limits of fans,
 which are used to define tropicalizations of valuations of higher heights.
 In Section \ref{sec tropicalizations}, we study tropicalizations of valuations of any heights.
 In Section \ref{sec;trocoho}, we recall tropical cohomology.
 In Section \ref{sec stalks, trop K}, 
 we study stalks of the sheaf $\F^p$, and introduce tropical Milnor $K$-groups.
 In Section \ref{sec;main theorem},
 we prove the main theorem (Theorem \ref{thm,trop,main}) of this paper.
 In Section \ref{sec;projective;line},
 we recall analytifications and tropicalizations of the affine line $\A^1$, explicitly.
 In Section \ref{sec;trocoho;P^1;fiber},
 we prove  $\A^1$-homotopy invariance of tropical cohomology over trivially valued fields.
 In Section \ref{section finite fields}, we prove the existence of corestriction map, which is needed for Theorem \ref{thm;trop;coho;Zariski;troK} over trivially valued finite fields.
 In Section \ref{sec ssreduction}, we prove Corollary \ref{introduction spectral sequence for ss reduction}.
 
\section{Notations and terminologies}\label{sec;not;term}
 For a $\Z$-module $G$ and  a commutative ring $R$, we put $G_R := G \otimes_\Z R$.
 
 A separated scheme $X$ of finite type over a field $K$ is called an algebraic variety.
 We denote the residue field of the structure sheaf at a point $x \in X $  by $k(x)$.
 We put $X_L$ the base change for a field extension $L/K$.
 When $K$ is equipped with the trivial valuation, 
 we put $X^{\circ}$ the subset of the Berkovich analytic space $X^{\Ber}$  consisting of valuations $v \in X^{\Ber}$ such that there exists a natural morphism $\Spec \O_v \to X$.
 
 We denote an algebraic closure of a field $L$ by $L^{\alg}$.
 We denote transcendental degree of a field extension $L/K$ by $\trdeg (L/K)$.
 For a valuation $v$ of a field $L$, we put $L_v$ the completion of $L$. 
 
 To simplify notation, we put $H_{\Trop}^{p,q} (-):= H_{\Trop}^{p,q}(-;\Q)$.

 In this paper except for Section \ref{sec;trocoho;P^1;fiber}, 
 let $M$ be a free $\Z$-module of finite rank $n$,
 and $N:= \Hom (M,\Z)$.
 
\section{Valuations and non-archimedean analytic spaces}\label{sec;analytic;spaces}
  In this section, we give a quick review on (non-archimedean) \textit{valuations} (Subsection \ref{subsec;val})
  and non-archimedean analytic spaces:
  \textit{Berkovich analytic spaces} (Subsection \ref{subsec;Berkovich}),
  \textit{Zariski-Riemann spaces} (Subsection \ref{subsec;ZR}), and
  \textit{Huber's adic spaces} (Subsection \ref{subsec;adic}).
  (See Section \ref{sec;projective;line} for analytifications of the affine line.)
  We refer to  \cite{HK94} and \cite[Chapter 6]{Bou72} for valuations,
  \cite{Hub93} and \cite{Hub94} for valuations and Huber's adic spaces,
  \cite{Ber90} and \cite{Tem15} for Berkovich analytic spaces,
  and
  \cite{Tem11} for Zariski-Riemann spaces.
  
 \subsection{Valuations}\label{subsec;val}
  In this subsection, rings are commutative rings with unit elements.
  \begin{dfn}
  We define a \emph{valuation} $v$ of a ring $R$ as a map $v \colon R \rightarrow \Gamma'_v\cup \{\infty\} $
  satisfying the following properties:
  \begin{itemize}
  \item  $\Gamma'_v$ is a totally ordered abelian group,
  \item $v(ab)=v(a)+v(b)  $ for $a,b \in R$, where we put  $ \gamma + \infty = \infty + \gamma = \infty$ ($\gamma \in \Gamma'_v$),
  \item $v(0)=\infty$ and $v(1)=0$,
  \item  $v(a + b)\geq \min \{v(a),v(b)\} $, where we extend the order of $\Gamma'_v$ to $\Gamma'_v\cup\{\infty\} $ by $ \infty \geq \gamma$ for $\gamma \in \Gamma'_v$.
  \end{itemize}
  \end{dfn}
  The set  $ \supp(v):=v^{-1}(\infty)$ is a prime ideal of $R$, which is called the \emph{support} of $v $.
  The subgroup  of $\Gamma'_v$ generated by $v(R)\setminus \{\infty\}$ is called the \emph{value group} of $v$.
  We denote it by $\Gamma_v$.
  The valuation $v$ gives a valuation on $\Frac (R / \supp (v) )$, which is also denoted by $v$.
  We put
  $$\O_v :=\{a \in \Frac(R/\supp(v)) \mid v(a) \geq 0 \} ,$$
  which is called  the \emph{valuation ring} of $v$, where $\Frac(R/\supp(v))$ is the fraction field.
  We put
  $$\kappa(v):=\Frac(\O_v / \{ a \in \O_v \mid v(a)>0\}),$$
  which is called  the \emph{residue field} of $v$.
  If $ R / \supp(v) \subset \O_v$,
  we call the image of  the maximal ideal under the canonical morphism $\Spec \O_v \to \Spec R$
  the \emph{center} of $v$.
  
  \begin{dfn}
  We call two valuations $v$ and $w$ of a ring $R$ are \emph{equivalent} if there exists an isomorphism $\varphi \colon \Gamma_v \overset{\sim}{\to} \Gamma_w$ of totally  ordered abelian groups satisfying $\varphi' \circ v =w$, where $\varphi' \colon \Gamma_v \cup \{\infty\} \rightarrow \Gamma_w\cup \{ \infty\}$ is the extension of $\varphi $ defined by $\varphi'(\infty) =\infty$.
  \end{dfn}
  
  We call the rank of a totally ordered abelian group $\Gamma$ as an abelian group the \emph{rational rank} of $\Gamma$.
  We denote it by $\rank \Gamma$.
  
  \begin{dfn}
  Let $\Gamma$ be a totally ordered abelian group.
  A subgroup $H$ of $\Gamma$ is called \emph{convex} if every element $\gamma \in \Gamma$ satisfying $h< \gamma<h' $ for some $h,h' \in H$ is contained in $H$.
  \end{dfn}
  
  When $H \subset \Gamma$ is a convex subgroup,  the quotient subgroup $\Gamma /H$  has a natural order, i.e., 
  $\overline{\gamma} \leq \overline{\gamma'} $
  if $\gamma \leq \gamma +h$ for some $h \in H$.
  
  The set of convex subgroups of $\Gamma$ are totally ordered by inclusions.
  \begin{dfn}
  We call the number of proper convex subgroups of a totally ordered abelian group $\Gamma$ the \emph{height} of $\Gamma$.
  We denote it by $\height \Gamma$.
  \end{dfn}
  
  The following well-known theorem is called the Harn embedding theorem.
  \begin{thm}[{Clifford \cite{Cli54}, Hausner-Wendel \cite{HW52}}]\label{thm:Harn:embed}
  Every totally ordered abelian group $\Gamma$ of finite height  $n$ has an embedding into the additive group $\R^n$ with the lexicographic order.
  \end{thm}
  
  \begin{rem}\label{convex subgroups of Rn}
  Let $G \subset \R^n$ be a totally ordered subgroup.  
  Then the convex subgroups of $G$ are  
  $$ G \cap (\{(0,\dots,0)\} \times \R^{n-r}) $$
   $ (0\leq r \leq n)$,
  where $(0,\dots,0) \in \R^r$.
  In particular, 
  a totally ordered abelian group $\Gamma$ of finite height  $n$ 
  can not be embedded in $\R^{n-1}$.
  \end{rem}

  We call the rational rank (resp.\ height) of  the value group of a valuation $v$ the rational rank (resp.\ height) of $v$.
  Rational ranks and heights of equivalence classes of valuations  
  are defined as those of representatives.
  
  \begin{dfn}\label{dfn trivial valuation}
  	A valuation $v$ of a field $L$ is said to be \emph{trivial} if  $\Gamma_v =\{0\}$.
  \end{dfn}
  
  Let $R$ be a ring.
  We call  the set of all equivalence classes of valuations of $R$ the \emph{valuation spectrum} of $R$.
  We denote it by $\Spv(R)$.
  We equip $\Spv(R)$ with the topology which is generated by 
  the sets
   $$ \{v\in \Spv(R) \mid v(a)\geq v(b)\neq \infty \} \qquad (a,b \in R) .$$
  In this paper, \emph{generalizations} and \emph{specializations} of a valuation in (subsets of) $\Spv(R)$ are in the topological sense.
  
  Let $v \colon R \rightarrow \Gamma_v \cup \{\infty\}$ be a valuation,
  $H \subset \Gamma_v$ a convex subgroup. We define a map
  \begin{align*}
  v/H\colon R\rightarrow (\Gamma_v/H)\cup \{\infty\}, \quad &  a \mapsto \begin{cases}
  v(a)\text{ mod  } H & \text{if } v(a) \neq \infty \\
   \infty  & \text{if } v(a)=\infty.
   	\end{cases}
  \end{align*}
  
  \begin{lem}[{Huber-Knebusch \cite[Lemma 1.2.1]{HK94}}]
  The map $v/H$ is a valuation of $R$, and it is a generalization of $v$ in $\Spv(R)$,
  called a vertical (or primary) generalization.
  \end{lem}
  
  For a field $K$, all specializations in $ \Spv (K)$ are vertical \cite[Proposition 1.2.4]{HK94}.
  
  \begin{rem}[{Bourbaki \cite[Chapter 6. Subsection 4.1, 4.2, 4.3]{Bou72}}]
  \label{rem;val;vert;special;resi;fld;val}
  For a valuation $v$ of a field $K$,
  there is a natural bijection between specializations $w$ of $v$ in $\Spv (K)$ and valuations $\overline{w}$ on the residue field $\kappa(v)$ of $v$
  given as follows.
  Let $w  \in \Spv (K)$ be a specialization of $v$.
  Then the image of the valuation ring $\O_w (\subset \O_v)$
  under the natural map $\O_v \twoheadrightarrow \kappa (v)$
  is the valuation ring $\O_{\overline{w}} $ 
  of a valuation $\overline{w} $ of $\kappa (v)$.
  The value group $\Gamma_{w}$ contains $\Gamma_{\overline{w}}$ 
  as a convex subgroup, 
  and we have $w / \Gamma_{\overline{w}} =v$.
  \end{rem}
   
 \subsection{Berkovich analytic spaces}\label{subsec;Berkovich}
  In \cite[Chapter 3]{Ber90}, Berkovich introduced the \textit{Berkovich analytic space} $X^{\mathrm{Ber}}$, which is a locally compact (\cite[Theorem 1.2.1]{Ber90}) Hausdorff  (\cite[Theorem 3.4.8 (i), Theorem 3.5.3 (i)]{Ber90}) topological space, associated to an algebraic variety $X$ over a complete valuation field $(L,v_L \colon L^{\times} \to \R)$ of height $\leq 1$.
  Berkovich analytic spaces are, as sets,
  the sets of valuations $v $ with target $\R \cup \{\infty \}$.
  (Strictly speaking, Berkovich uses  multiplicative seminorms instead of  valuations, but we identify them in a natural way by $ - \log \colon \R_{> 0} \cong \R $.)

  We denote the Berkovich analytic space associated to an $L$-\emph{affinoid algebra} $A$ (\cite[Definition 2.1.1]{Ber90}) by $\M(A)$ \cite[Section 1.2]{Ber90}.
  There exists a unique minimal closed subset of $ \mathscr{M}(A)$
  on which every element of $A$ has its minimum \cite[Corollary 2.4.5]{Ber90}, 
  called the \textit{Shilov boundary} of $\mathscr{M}(A)$.
  It is a finite set. 
   
 \subsection{Zariski-Riemann spaces}\label{subsec;ZR}
  For a finitely generated extension $L/K$ of fields,
  we put $\ZR(L/K) \subset \Spv (L)$ the subspace of equivalence classes of valuations of $L$ which are trivial on $K$.
  We call $\ZR (L/K)$ the \textit{Zariski-Riemann space}.
  
  There is another expression.
  For each $v \in \ZR(L/K)$ and proper integral algebraic variety $X$ over $K$ with function field $L$,
  by the valuative criterion of properness, there exists a unique morphism $\Spec \O_v \rightarrow X$ over $K$ extending the function field $\Spec L \to X$.
  This induces a map $\ZR(L/K) \twoheadrightarrow X$ by taking the image of the maximal ideal of $ \O_v$ (i.e., center of $v$).
  We have a map from $\ZR (L/K)$ to
  the inverse limit $\underleftarrow{\lim} X$ (as topological spaces) of such varieties $X$ under birational morphisms.
  
  The following  Propositon is well-known,
  	see e.g., \cite[Corollary 3.4.7]{Tem11}.
  \begin{prp}\label{ZRhomeo}
  The map  $\ZR(L/K) \rightarrow \underleftarrow{\lim} X$ is a homeomorphism.
  \end{prp}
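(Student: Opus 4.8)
The plan is to show that the natural map $\beta \colon \ZR(L/K) \to \underleftarrow{\lim} X =: \mathfrak{X}$ is a bijection which, together with its inverse, carries a basis of open sets to open sets. First I would record that $\beta$ is well defined and that, for each proper $K$-model $X$ of $L$, the composite $\ZR(L/K) \xrightarrow{\beta} \mathfrak{X} \xrightarrow{\pi_X} X$ is the center map $v \mapsto c_X(v)$: the generic point $\Spec L \to X$ extends uniquely to $\Spec \O_v \to X$ by the valuative criterion of properness, and the image of the closed point, $c_X(v)$, satisfies $\O_{X,c_X(v)} \subseteq \O_v$ with the inclusion local and inducing the identity on fraction fields. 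I would also note that the system of proper $K$-models of $L$ is cofiltered and essentially small, so that $\mathfrak{X}$ makes sense.

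For bijectivity, the crucial input is the formula $\O_v = \bigcup_X \O_{X,c_X(v)}$, a directed union over models: ``$\supseteq$'' is clear, and for ``$\subseteq$'', given $f \in \O_v$, $f \neq 0$, I would pass to the closure of the graph of the rational map $X_0 \dashrightarrow \P^1_K$ defined by $f$, obtaining a model $X$ with a morphism $q \colon X \to \P^1_K$ extending $f$; since $v(f) \geq 0$, the center $q(c_X(v)) = c_{\P^1_K}(v|_{K(f)})$ lies in the chart $\Spec K[f]$, so $f \in \O_{X,c_X(v)}$. Injectivity of $\beta$ is then immediate: if $c_X(v) = c_X(w)$ for all $X$, then $\O_v = \O_w$, hence $v$ and $w$ are equivalent. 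For surjectivity, given $(x_X)_X \in \mathfrak{X}$ I would put $\O := \bigcup_X \O_{X,x_X}$, a directed union of local subrings of $L$ with fraction field $L$; it is a valuation ring of $L$ because, for $f \in L^\times$, replacing the index $X$ by the graph of the rational map $X \dashrightarrow \P^1_K$ given by $f$ puts $x_X$ into one of the two standard charts, so $f \in \O$ or $f^{-1} \in \O$; it is trivial on $K$; and its center on each $X$ is $x_X$, since $\O_{X,x_X} \to \O_{X',x_{X'}}$ is local for every $X' \to X$ in the system, whence $\mathfrak{m}_\O \cap \O_{X,x_X} = \mathfrak{m}_{x_X}$.

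To conclude that $\beta$ is a homeomorphism, I would check that both $\beta$ and $\beta^{-1}$ send basic opens to opens. A basis of $\mathfrak{X}$ is given by the sets $\pi_X^{-1}(U)$ with $U = \Spec A \subseteq X$ open affine, $A = K[a_1,\dots,a_m]$; then $\beta^{-1}(\pi_X^{-1}(U)) = \{v \mid A \subseteq \O_v\} = \bigcap_{i} \{v \mid v(a_i) \geq 0\}$ is a basic open of $\ZR(L/K)$, so $\beta$ is continuous. In the other direction, a subbasis of $\ZR(L/K)$ consists of the sets $\{v \mid h \in \O_v\}$ for $h \in L^\times$, so a basis consists of the sets $W = \{v \mid h_1,\dots,h_n \in \O_v\}$. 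For such a $W$ I would take the proper model $X$ obtained as the closure of the graph of the rational map to $(\P^1_K)^n$ with coordinates $h_1,\dots,h_n$, so that these extend to a morphism $q \colon X \to (\P^1_K)^n$, and set $U := q^{-1}((\A^1_K)^n)$; bookkeeping of centers under $q$ and its scheme-theoretic image $\overline{q(X)}$, exactly as in the proof of the union formula, gives $W = \{v \mid c_X(v) \in U\}$, hence $\beta(W) = \pi_X^{-1}(U)$ is open in $\mathfrak{X}$. Thus $\beta$ is a continuous open bijection, i.e.\ a homeomorphism.

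I expect the main obstacle to be the openness of $\beta$: one must produce, for each basic open $W$ of $\ZR(L/K)$, a \emph{single} proper model $X$ and an open $U \subseteq X$ with $W = \{v \mid c_X(v) \in U\}$, which forces the graph-resolution construction and a careful analysis of how centers behave under the (not necessarily birational) morphism $q \colon X \to (\P^1_K)^n$ and its image. The surjectivity step --- recognizing $\bigcup_X \O_{X,x_X}$ as a valuation ring whose centers are the prescribed points --- is the other place needing care; the remaining assertions are formal consequences of the valuative criterion of properness.
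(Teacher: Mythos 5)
The paper itself defers this proposition entirely to Temkin's Corollary~3.4.7, so there is no in-paper argument to compare against; what you have written is a correct, self-contained proof of the classical Zariski--Nagata statement that the cited result encodes, and its structure is the standard one: identify $\O_v = \bigcup_X \O_{X,c_X(v)}$ as a directed union to get bijectivity, then match basic open sets on both sides, resolving the rational map $(h_1,\dots,h_n)\colon X_0 \dashrightarrow (\P^1_K)^n$ to a morphism $q$ on a graph-closure model to handle openness of $\beta$.

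The one step that merits a word of justification, which you flagged but left compressed, is why in the openness argument $c_X(v) \in q^{-1}((\A^1_K)^n)$ is equivalent to $h_1,\dots,h_n \in \O_v$ even though $q$ need not be birational onto its image. The composite $\Spec\O_v \to X \xrightarrow{q} (\P^1_K)^n$ sends every prime $\mathfrak{p}\subseteq\mathfrak{m}_v$ to a generalization of $q(c_X(v))$; since open subsets are stable under generalization, the image of $\Spec\O_v$ lies in $(\A^1_K)^n$ if and only if $q(c_X(v))$ does, which in turn holds if and only if the composite factors through $\Spec K[T_1,\dots,T_n]$, i.e.\ $h_i\in\O_v$ for all $i$. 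With that made explicit, the argument is complete, and the remaining steps (cofilteredness of the system of proper models via closures of diagonals and graphs, locality of the transition maps $\O_{X,x_X}\to\O_{X',x_{X'}}$, the identification of $\bigcup_X\O_{X,x_X}$ as a valuation ring trivial on $K$ with the prescribed centers, and continuity of $\beta$) are handled correctly.
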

  
  \begin{rem}[Abhyankar's inequality]\label{Abhyankar's inequality}
  For $v \in \ZR (L/K)$, 
  we have 
  $$\trdeg (\kappa (v) /K ) + \rank \Gamma_v \leq \trdeg (L/K).$$
  The equality holds for some explicit $v$.
  \end{rem}
  
  \begin{dfn} (This notation is not standard.)
  $$(\Spec L/K)^{\Ber}
  := \{ v\colon L \to \R \cup \{\infty\} \mid \text{a valuation trivial on } K \}.
  $$
  \end{dfn}
  For an integral algebraic variety $X$ over $K$ with function field $L$,
  the set $(\Spec L/K)^{\Ber}$ can be identified with the subspace of the analytification $X^{\Ber}$  
  consisting of points whose supports are the generic point of $X$. We introduce the induced topology on $(\Spec L/K)^{\Ber}$. 
   
 \subsection{Huber's adic spaces}\label{subsec;adic}
  For a separated scheme $X$ of finite type over a trivially valued field $K$,
  the adic space $X^{\ad}$ associated to $X$ is defined as follows.
  (See \cite{Hub93} and \cite{Hub94} for notations and the theory of his adic spaces.)
  For each affine open subvariety $U=\Spec R \subset X$,
  we put $U^{\ad}:= \Spa(R,R \cap K^{\alg})$ the space of equivalence classes of valuations on $R$ trivial on $K$
  (where $R$ is equipped with the discrete topology).
  We define $X^{\ad}$ by glueing $U^{\ad}_{\alpha}$ for an affine open covering $\{U_{\alpha}\}_\alpha$ of $X$.
  
  \begin{rem}
  Taking supports of valuations induces a surjective map
  $X^{\ad} \twoheadrightarrow X$
  whose fiber of $x \in X$ is homeomorphic to $\ZR(k(x) / K)$.
  \end{rem}
  
  \begin{rem}\label{rem;Ber;adic}
  Taking equivalence classes induces a map $X^{\Ber} \to X^{\ad}$.
  This induces a bijection
  $$X^{\Ber} / (\emph{the equivalence relation of valuations}) \cong X^{\ad,\height \leq 1}$$
  to the subset $X^{\ad,\height \leq 1}$ of $X^{\ad}$
  consisting of equivalence classes of valuations of height $\leq 1$.
  \end{rem}
  
  For an algebraic variety $Y$ over a complete valuation field $L$ of height $1$,
  let $Y^{\ad}$ be the adic space associated to $Y$
  in the sense of \cite{Hub94}.
  
  \begin{rem}
  Taking equivalence classes induces an injective map 
  $Y^{\Ber} \to Y^{\ad}$ 
  whose image is 
  the subset $Y^{\ad,\height = 1}$
  consisting of equivalence classes of valuations of height $\leq 1$.
  Note that this is not a continuous map.
  \end{rem}
  
\section{Targets of tropicalization maps}\label{sec polyhedra}
  In this section,
  we recall a partial compactification $\bigsqcup_{\sigma \in \Sigma } N_{\sigma ,\R}$ of $\R^n$ and fans and polyhedral complexes in it (Subsection \ref{subsec;fan}).
  We also study the set of equivalence classes of group homomorphisms
  from $M$ 
  to totally ordered abelian groups (Subsection \ref{subsection Homomorphisms to totally ordered abelian groups}).
  There is a natural bijection from it to the limit of fan structures of $\Hom (M,\R)= N_\R$ (Subsection \ref{subsection Limits of fan structures}).
  They are used as targets of tropicalization maps of valuations of heigher heights in Subsection \ref{subsection Tropicalizations of Zariski-Riemann spaces} and Subsection \ref{subsec;trop;adic}.
  
  Recall that 
  $M $ is a free $\Z$-module of finite rank $n$
  and $N:=\Hom(M,\Z)$.
  Let $\Sigma$ be a fan in $N_\R$, and $T_{\Sigma}$ the normal toric variety over a field $K$ corresponding to $\Sigma$. See \cite{CLS11} for toric varieties.
  In this paper, cones mean  strongly convex rational polyhedral cones.
  There is a natural bijection between cones $\sigma \in \Sigma$ and torus orbits $O(\sigma)$ in $T_{\Sigma}$.
  The torus orbit $O(\sigma)$ is isomorphic to a torus $ \Spec K[M  \cap \sigma^{\perp}].$
  We put $N_{\sigma}:= \Hom (M  \cap \sigma^{\perp} , \Z)$,
  \begin{align*}
  \sigma^{\perp}:= & \{m\in M_\R \mid n(m)=0 \ (n\in \sigma) \}, \\
  \sigma^{\vee}:= &\{m\in M_\R \mid n(m ) \geq 0 \ (n\in \sigma) \}.
  \end{align*}
  
 \subsection{Fans and polyhedral complexes}\label{subsec;fan}
 We recall tropical toric varieties, which are target spaces of Kajiwara-Payne's tropicalizations, see \cite{Kaj08}, \cite{Pay09}, \cite{FGP14}.
  A topology on the disjoint union  $\bigsqcup_{\sigma \in \Sigma } N_{\sigma,\R}$  is defined as follows.
  We extend the canonical topology on $\R$  to that on $\R \cup \{ \infty \}$ so that  $(a, \infty]$ ($a \in \R$) are a basis of neighborhoods of $\infty$.
   We consider the set of semigroup homomorphisms $\Hom (M \cap \sigma^{\vee}, \R \cup \{ \infty \}) $  as a topological subspace of $(\R \cup \{ \infty \})^{M \cap \sigma^{\vee}} $.
   For $\sigma \in \Sigma$,
  we define a topology on $\bigsqcup_{ \substack{\tau \in \Sigma \\ \tau \subseteq  \sigma  } } N_{\tau,\R}$ by a natural bijection
  \begin{align*}	
  \Hom (M \cap \sigma^{\vee}, \R \cup \{ \infty \}) & \cong \bigsqcup_{ \substack{\tau \in \Sigma \\ \tau \subseteq  \sigma  } } N_{\tau,\R}  \\
   n & \mapsto n|_{\langle n^{-1}(\R) \rangle },
  \end{align*}
  where $\langle n^{-1}(\R) \rangle  \subset M $ is the abelian subgroup generated by $n^{-1}(\R)$, which is of the form $M \cap \tau^{\perp}$ for some $\tau \in \Sigma$ with $\tau \subset \sigma$.
   Then we define a topology on  $\bigsqcup_{\sigma \in \Sigma } N_{\sigma,\R}$ by glueing the topological spaces $ \bigsqcup_{\substack{\tau \in \Sigma \\ \tau \subseteq \sigma  }} N_{\tau,\R} $.
  
  We shall recall fans and polyhedral complexes in $\bigsqcup_{\sigma \in \Sigma } N_{\sigma,\R}$.
  Let $\Gamma\subset \R$ be a subgroup.
  \begin{dfn}
  A subset of $\R^n$ is called a \emph{$\Gamma$-rational polyhedron}
  if  it is the intersection of finitely many sets of the form
  $$\{x \in \R^n \mid \langle x , a \rangle \leq b \} \ (a \in \Z^n, b \in  \Gamma),$$
  here $\langle x,a\rangle$ is the usual inner product of $\R^n$.
  \end{dfn}
  
  Strongly convex $\{0\}$-rational polyhedra are called \emph{cones},
  and we simply call $\R$-rational polyhedra \emph{polyhedra}.
  \begin{dfn}
  For a cone $\sigma\in \Sigma $ and a ($\Gamma$-rational) polyhedron (resp.\ a cone) $C\subset N_{\sigma,\R}$,
  we call its closure $P:=\overline{C}$ in  $ \bigsqcup_{\sigma \in \Sigma } N_{\sigma,\R}$ a \emph{($\Gamma$-rational) polyhedron} (resp.\ a \emph{cone}) in $\bigsqcup_{\sigma \in \Sigma } N_{\sigma,\R}$.
  In this case, we put $\relint(P):=\relint(C)$, and call it the \emph{relative interior} of $P$.
  We put $\dim(P):=\dim(C)$.
  We also put $\sigma_P \in \Sigma $ the unique cone such that $\relint(P)\subset N_{\sigma_P,\R}$.
  \end{dfn}

  A subset $Q$ of a polyhedron $P$ in $\bigsqcup_{\sigma \in \Sigma } N_{\sigma,\R}$ is called a \emph{face} of $P$ if it is
  the closure of the intersection $P^a \cap  N_{\tau,\R}$
  in $ \bigsqcup_{\sigma \in \Sigma } N_{\sigma,\R}$
  for some $ a \in M \cap \sigma_P^{\perp}$ and some cone $\tau \in \Sigma $,
  where $P^a$ is the closure of
  $$\{x \in P\cap  N_{\sigma_P,\R} \mid x(a) \leq  y(a) \text{ for any } y \in P \cap  N_{\sigma_P,\R} \}  $$
  in $ \bigsqcup_{\sigma \in \Sigma } N_{\sigma,\R}$.
  A finite collection $\Lambda$ of  ($\Gamma$-rational) polyhedra (resp.\ cones) in $ \bigsqcup_{\sigma \in \Sigma } N_{\sigma,\R}$ is called  a   \emph{($\Gamma$-rational) polyhedral complex} (resp.\ a \emph{fan}) if it satisfies the following two conditions:
  \begin{itemize}
  \item for $P \in \Lambda$, each face of $P$ is also in $\Lambda$, and 
  \item for $P,Q \in \Lambda$, the intersection $P \cap Q$ is a face of $P$ and $Q$.
  \end {itemize}
  We call the union
  $$\lvert \Lambda \rvert := \bigcup_{P\in\Lambda} P \subset \bigsqcup_{\sigma \in \Sigma } N_{\sigma,\R} $$
  the \emph{support} of $\Lambda$.
  We say that $\Lambda$ is a \emph{($\Gamma$-rational) polyhedral complex} (resp.\ a \emph{fan}) \emph{structure} of $\lvert \Lambda \rvert $.
  For a  subset $B \subset \bigsqcup_{\sigma \in \Sigma } N_{\sigma,\R}$,
  we put
  $$\Lambda \cap B := \{P \cap B\}_{P \in \Lambda }.$$
  
  A polyhedral complex $\Lambda'$ is called a refinement of a polyhedral complex $\Lambda$ (or we say that $\Lambda'$ is finer than $\Lambda$)
  if their supports are the same and 
  for any $P \in \Lambda'$,
  there exists a polyhedron $Q_{P} \in \Lambda$
  such that $\relint (P ) \subset \relint Q_P.$
  This induces a surjective map 
  $\Lambda' \ni P \mapsto Q_P \in \Lambda$.
  
  The following notions are used to study tropicalizations over valuation fields of height $1$ by Gubler \cite{Gub13}, e.g., for tropical compactifications.

  \begin{dfn}\label{admissible cones}
  	A subset of $\R^n \times \R_{\geq 0}$ is called a \emph{$\Gamma$-admissible cone} 
  if  it is the intersection of finitely many sets of the form
  $$\{(x,s) \in \R^n \times \R_{\geq 0} \mid \langle x , a \rangle  + s b \leq 0 \} \ (a \in \Z^n, b \in  \Gamma)$$
  and if it does not contains a line.
  \end{dfn}
  
  We define $\Gamma$-admissible fans similarly to fans.
  For a $\Gamma$-admissible cone $\sigma$, 
  we put $\sigma_1 := \{x \in \R^n \mid (x,1) \in \sigma\}$, a $\Gamma$-rational polyhedron. 
  For a $\Gamma$-admissible fan $\Sigma$, 
  we put $\Sigma_1 := \{ \sigma_1\}_{\sigma \in \Sigma}$, a $\Gamma$-rational polyhedral complex. 
   
 \subsection{Homomorphisms to totally ordered abelian groups}\label{subsection Homomorphisms to totally ordered abelian groups}
  In this subsection, we shall give a description of equivalence classes of group homomorphisms from $M$ to totally ordered abelian groups.
  Recall that 
  every totally ordered abelian group of height $r$ can be embedded in $\R^r$ (Theorem \ref{thm:Harn:embed}).

  For $l_i \in N_\R$, 
  we put $L_i$   
  the minimal $\Q$-linear subspace of $ N_\Q$ such that  $L_{i,\R}:=L_i\otimes_\Q \R \subset N_\R $ contains $l_i$.
  For $r\in \Z_{\geq 1}$,
  we put
  $$J_r:= J_r (N):= \bigg\{ (l_1,\dots,l_r) \in (N_\R)^r \bigg|   l_i
  \notin \sum_{j=1}^{i-1} L_{j,\R}
   \ (1 \leq i\leq r)
   \bigg\},$$
   where $ \sum_{j=1}^{0} L_{j,\R}:=\{0\}$.
  We put $J_0:=J_0 (N):=\{0\}$.
  We identify $(N_\R)^r$ with $\Hom (M,\R^r)$.
  
  \begin{lem}
  	Let $\L = (l_1,\dots,l_r) \in (N_\R)^r$.
  The abelian subgroup $\L(M) \subset \R^r$ is of height $r$ with respect to the lexicographic order
  if and only if 
  $\L = (l_1,\dots,l_r) \in  J_r$.
  \end{lem}
  \begin{proof}
  By Remark \ref{convex subgroups of Rn}, 
  the subgroup $\L(M) \subset \R^r$ is of height $r$ 
  if and only if 
  $$\L(M) \cap (\{0\}^{j-1} \times \R^{r-j+1})  \neq \L(M) \cap (\{0\}^{j} \times \R^{r-j})$$
  for any $j$.
  Since 
  $$ (l_1,\dots,l_s)(M)  \cong \L(M) / (\L(M) \cap (\{0\}^{s} \times \R^{r-s})),$$
  the latter is equivalent to that the natural surjection
  $$\R^j \supset  (l_1,\dots,l_{j}) (M) \twoheadrightarrow (l_1,\dots,l_{j-1})(M) \subset \R^{j-1}  $$
  is not injective for any $j$, i.e., $\L \in J_r$.
  (Note that $M\cap \Ker l_i =M\cap L_i^{\perp}$.)
  \end{proof}
  
  We say that $\L =(l_1, \dots, l_r) \in J_r$ and $ \L'=(l_1', \dots, l_r') \in J_r$
  are \emph{equivalent}
  if there is an isomorphism 
  $\varphi \colon \L(M) \cong \L'(M)$
  as totally ordered groups such that $\varphi \circ \L = \L'$.
  We put 
  $\L \sim_{I_r} \L'$
  if we have
  $$\R_{>0} \cdot l_i = \R_{>0} \cdot l'_i
  \in N_\R / \sum_{j=1}^{i-1} L_{j,\R} \ (1 \leq i\leq r),$$
   where we denote  the images of $l_i$ and $l_i'$ under the projection also by $l_i$ and $l_i'$.
  We also put $J_0/\sim_{I_r} :=J_0$.
  
  \begin{lem}
  $\L \in \J_r$ and $\L'\in \J_r$ are equivalent
  if and only if
  $\L  \sim_{I_r} \L'$.
  \end{lem}
  
  \begin{proof}
  	We give a proof by induction on $r \geq 1$.
  	The case of $r=1$ is trivial.
  	We assume that $r \geq 2$, and one of the two assertions (the equivalence  or $\L \sim_{I_r} \L'$) holds.
  	Then by the case of $r=1$, 
      the two assertions hold for 
  	$l_1, l_1' \in J_1$. 
  	In particular, we have $\Ker l_1 = \Ker l_1'$.
  	By the hypothesis of induction, the two assertions also hold for $M\cap \Ker l_1$ and 
  	$$(l_2,\dots,l_r),  (l_2',\dots,l_r') \in J_{r-1}(N /N \cap  L_1 ),$$
   where we denote  the images of $l_i$ and $l_i'$ ($2\leq i \leq r$) under the projection also by $l_i$ and $l_i'$.
      (Note that 
      $\Hom (M \cap \Ker l_1,\Z) \cong N / N \cap L_1$.)
  	It is easy to see that we have $\L \sim_{I_r} \L'$ since $l_1 \sim_{I_1} l_1'$ and 
      $(l_2,\dots,l_r) \sim_{I_{r-1}}  (l_2',\dots,l_r')$.
  	There are unique isomorphisms
  	$\varphi_1 \colon l_1(M) \cong l_1'(M)$
  	and 
  	$$\varphi_2 \colon (l_2,\dots,l_r)(M \cap \Ker l_1)  
  	\cong (l_2',\dots,l_r') (M \cap \Ker l_1)$$
  	as ordered groups which are compatible with $l_1,l_1', (l_2,\dots,l_r)$ and $(l_2',\dots,l_r')$.
    Hence
  	there exists a unique isomorphism
  	$\varphi \colon \L(M) \cong \L'(M)$
  	as ordered groups such that $\varphi \circ \L = \L'$.
  	Hence $\L $ and $\L'$ are eqiuvalent.
  \end{proof}
  
  For 
  $ \L \in J_r $, we put $\rank \L$ the rank of the free $\Z$-module $\L(M)$.
  
  We extend the notion of equivalence to group homomorphisms $  M \to \Gamma $ to general totally ordered abelian groups $\Gamma$ (from the case of $\R^r$) in the natural way.
  We put $\Val M$ the set of equivalence classes of group homomorphisms 
  from $M$ to totally ordred abelian groups,
  and 
  $\Val^{\height =r} M$ the subset of height $r$. 
  
  \begin{cor}\label{homomorphisms to totally ordered abelian groups}
  There is a natural bijection 
  between $\Val^{\height =r} M \cong J_r / \sim_{I_r}	$.
  \end{cor}
  
  Let $M'$ be a free $\Z$-module of finite rank, $N':=\Hom (M',\Z)$ its dual, and $\psi \colon M \to M'$ a group homomorphism. 
  Then $\psi $ induces maps $N'_\R \to N_\R$ and 
  $$\bigsqcup_{r\geq 0} J_r (N') /\sim_{I_r} \to \bigsqcup_{r\geq 0} J_r (N) /\sim_{I_r}.$$
  Under the bijection in Corollary \ref{homomorphisms to totally ordered abelian groups}, this map coincides with a map
  $$-\circ \psi \colon \Val M' \to \Val M.$$
   
 \subsection{Limits of fan structures}\label{subsection Limits of fan structures}
  In this subsection, we shall give a bijection between the limit of fan structures of $N_\R$ and the set of equivalence classes of group homomorphisms from $M$ to totally ordered abelian groups.
  
  Let 
  $ \underleftarrow{\lim} \Xi$ be the inverse limit of all fan structures $\Xi$ of $N_\R$ as sets. (Maps are given by refinements, see Subsection \ref{subsec;fan}.)
  For a subset $A\subset \R^r$, we write $A\geq 0$ (resp. $A>0$) if $a\geq 0$ (resp. $a>0$) for any $a\in A$ 
  with respect to the lexicographic order.
  \begin{lem}\label{lem;trop;adic;another;express;NR}
  There is a bijection
  \begin{align*}
  \bigsqcup_{r \geq 0}  J_r	 / \sim_{I_r}
   \cong  \underleftarrow{\lim} \Xi
  \end{align*}
  given by
  $\{0\} \mapsto (\{0\})_{\Xi}$ 
  and 
  $ \L= (l_1, \dots,l_r) \mapsto (P_{\L,\Xi})_{ \Xi}$,
  where for sufficiently fine fan structure $\Xi$ (which depends on $\L$),
  the cone $P_{\L,\Xi}$ is the unique cone in $\Xi$ satisfying
  \begin{align*}
  	& \L (M \cap P_{\L,\Xi}^{\vee }) \geq 0   ,   \\
  	& \L ((M \cap P_{\L,\Xi}^{\vee}) \setminus P_{\L,\Xi}^{\perp}) > 0 .
  \end{align*}
  \end{lem}
  \begin{proof}
  We prove the assertion (including the existence and the uniquness of such $P_{\L,\Xi}$) by induction on $\dim N$.
  When $\dim N=0$, it is trivial.
  We assume $\dim N \geq 1$.
  Note that $\bigcap_{\Xi} P_\Xi $ is a halfline 
  for any $(P_\Xi)_{\Xi} \in \underleftarrow{\lim} \Xi  \setminus (\{0\})_\Xi $.
  We fix $l \in N_\R \setminus \{0\}$.
  We shall show that there exists a bijection
  \begin{align}\label{eq limit fan structure}
  & \bigsqcup_{r \geq 1}\{ \L= (l_1,\dots, l_r) \in J_r |\R_{>0} \cdot l_1 = \R_{>0} \cdot l \} / \sim_{I_r}  \\
  \cong  &
   \{(P_\Xi)_{\Xi} \in \underleftarrow{\lim} \Xi  \mid l \in \bigcap_{\Xi} P_\Xi  \}   . \notag
  \end{align}
  We will see that the required bijection is given as disjoint union of bijections (\ref{eq limit fan structure}).
  We put $L$  
  the minimal subspace of $ N_\Q$ such that  $L_\R$ contains $l $.
  Let $\Xi_0$ be a sufficiently fine fan structure of $N_\R$
  such that
   the cone 
  $P_{l,\Xi_0}  \in \Xi_0$ 
    whose relative interior contains $l$ exactly
    spans $L_\R$.
  Then for $r \geq 1$,  we have a bijection
  \begin{align*}
  & \{ L= (l_1,\dots, l_r) \in J_r | \R_{>0} \cdot l_1 =\R_{>0} \cdot  l \} / \sim_{I_r} \\
  \cong  &
    J_{r-1} (N_{P_{l,\Xi_0}})  / \sim_{I_{r-1}} 
  \end{align*}
  by $(l_1 ,l_2,\dots,l_r) \mapsto (l_2, \dots,l_r)$.
  By the hypothesis of the induction, 
  we have a bijection
  \begin{align*}
    J_{r-1} (N_{P_{l,\Xi_0}})  / \sim_{I_{r-1}} 
    \cong \underleftarrow{\lim} \Lambda,
  \end{align*}
  where $\Lambda$ runs through all fan structures of $N_{P_{l,\Xi_0},\R}$.
  By Remark \ref{induction limits of fans}, 
  we have 
  $$
   \{(P_\Xi)_{\Xi} \in \underleftarrow{\lim} \Xi  \mid l \in \bigcap_{\Xi} P_\Xi  \}  
   \cong \underleftarrow{\lim}  \Lambda.$$
   Hence 
  we get bijection (\ref{eq limit fan structure}). 
  It remains to show the existence and the uniqueness of $P_{\L,\Xi}$ and that bijection (\ref{eq limit fan structure}) maps $\L \mapsto (P_{\L,\Xi})_\Xi$.
  We fix
   $\L = (l_1,l_2,\dots,l_r) \in J_r$.
  When $r=1$, it is easy, i.e.,
  the cone
  $P_{\L,\Xi} \in \Xi$ in the assertion is the cone whose relative interior contains $l_1$,
  and $\L =(l_1) \mapsto (P_{\L,\Xi})_{\Xi}$ under bijection (\ref{eq limit fan structure}).
  We assume 
  $r \geq 2$. 
  For a cone $P \subset N_\R$,
  we have 
  \begin{itemize}
  	\item $\L (M \cap P^{\vee}) \geq 0$
  	if and only if 
  	$$ l_1 (M \cap P^{\vee}) \geq 0 \ \text{and} \  (l_2,\dots,l_r) (M \cap l_1^{\perp} \cap P^{\vee}) \geq 0,$$
  	and in this case, 
  	\item $\L ((M \cap P^{\vee})\setminus P^{\perp}) > 0$
  	if and only if  
  	$$(l_2,\dots,l_r) ((M \cap l_1^{\perp} \cap P^{\vee}) \setminus P^{\perp}) > 0.$$
  \end{itemize}
  Note that $l_1 (M\cap P^{\vee}) \geq 0$ if and only  if $l_1 \in P$.
  Hence by induction, $P_{\L,\Xi}$ exists, and  is unique. 
  $(P_{\L,\Xi})_{\Xi}$ is the image of $\L$
  under bijection (\ref{eq limit fan structure}).
  \end{proof}

  \begin{rem}\label{induction limits of fans}
  Let $l \in N_\R $ be a non-zero element.
  Let $L$ and  $\Xi_0$ be as in proof of Lemma \ref{lem;trop;adic;another;express;NR}. (In particular, we have $N_{P_{l,\Xi_0}} \cong N / N \cap L$.) 
  For each fan structure $\Xi $ of $N_\R$, 
  we put 
  $P_{l,\Xi} \in \Xi $ the cone whose relative interior contains $l$.
  When $\Xi$ is finer than $\Xi_0$,
  the natural surjection
  $N_{P_{l,\Xi},\R} \twoheadrightarrow N_{P_{l,\Xi_0},\R}$
  is a bijection.  We identify them.
  The set
  $$ \{ \pr_{P_{l,\Xi}}(P)  \mid P \in \Xi , \ P_{l,\Xi} \subset P \} $$
  is a fan structure of $N_{P_{l,\Xi},\R}$,
   where $\pr_{P_{l,\Xi}} \colon N_\R \twoheadrightarrow N_{P_{l,\Xi},\R}$ is the projection.
  Since
  $$
   \{(P_\Xi)_{\Xi} \in \underleftarrow{\lim} \Xi \mid P_{l,\Xi} \subset P_\Xi  \ \text{for any } \Xi \}
   =
   \{(P_\Xi)_{\Xi} \in \underleftarrow{\lim} \Xi \mid l \in \bigcap_\Xi P_\Xi  \},
   $$
   the projection $ \pr_{P_{l,\Xi}}(P)$
   induces a bijection
  $$
   \{(P_\Xi)_{\Xi} \in \underleftarrow{\lim} \Xi  \mid l \in \bigcap_{\Xi} P_\Xi  \}  
  \cong \underleftarrow{\lim} \Lambda,$$
  where $\Lambda$ runs through all fan structures of $N_{P_{l,\Xi_0},\R}$.
  \end{rem}
  
  By  Corollary \ref{homomorphisms to totally ordered abelian groups} and Lemma \ref{lem;trop;adic;another;express;NR}, we have the following.
  \begin{cor}\label{limits of fan and homomophirms to totally ordered groups}
  	We have  a natural bijection 
  	$$\Val M \cong 
   \underleftarrow{\lim}\Xi,$$ 
   where $\Xi$ runs through all fan structures of $N_\R$.
  \end{cor}
  
  By Corollary \ref{limits of fan and homomophirms to totally ordered groups}, 
  we define 
  ranks and heights of elements of 
   $\underleftarrow{\lim}\Xi$
   as those of their images in $\Val M$.
  
  \begin{rem}\label{rem;trop;adic;l_r;P;span;same}
  	\begin{itemize}
  		\item 
  Let
  $ \L= (l_1, \dots,l_r) \in J_r $ map to 
  $ (P_{\L,\Xi})_{ \Xi} \in \underleftarrow{\lim} \Xi $.
  By  proof of Lemma \ref{lem;trop;adic;another;express;NR},
  for a sufficiently fine fan structure $\Xi$  of $N_\R$,
  we have
  $$\Span_\R (P_{\L, \Xi})  = \sum_{i=1}^r L_{i,\R},$$
  where $L_{i} $ is the minimal $\Q$-linear subspace of $N_\Q$ 
  such that $L_{i,\R}$ contains $l_i$.
  (In particular, we have $\rank \L =  \dim P_{\L, \Xi}.$)
  
  \item Conversely, for a fan structure $\Xi$ of $N_\R$ and 
        a cone $P \in \Xi$ of dimesion $r$, 
  	  there exists 
  	  $$\L = (l_1,\dots,l_{\dim P}) \in J_{\dim P} \cap (N_\Q)^r$$
  	  such that 
  	    $P_{\L,\Xi} =P$ 
  	   and 
  	    $$\Span_\R P = \sum_{i=1}^r L_{i,\R} = \sum_{i=1}^r \R \cdot l_{i}.$$
  	  Namely, by taking a refinement of $\Xi$, 
  	  we may assume that $P= \R_{\geq 0}^{\dim P} \times \{0\}^{n-\dim P}$
  	  for some identification $N \cong \Z^n$,
  	  then 
  	  $\L =(e_1,\dots,e_{\dim P})$ is a required one, 
  	  where $e_i = (0,\dots,0,1,0, \dots, 0)$ is the $i$-th coordinate. 
  	\end{itemize}
  \end{rem}

  Let $M'$ be a free $\Z$-module of finite rank, $N':=\Hom (M',\Z)$ its dual, and $\psi \colon M \to M'$ a morphism. 
  Then $\psi $ induces a morphism $\psi \colon N'_\R \to N_\R$.
  Let  $\Xi'$ (resp. $\Xi$) be a fan structure of $N'_\R $ (resp. $N_\R$)
  such that 
  for a cone $P'\in \Xi'$, 
  the cone $\psi (P')$ is contained in some cone $P\in \Xi$.
  Let $P_{P'} \in \Xi$ be the minimal one among them.
  Then we get a map $\Xi' \ni P' \mapsto P_{P'} \in \Xi$. 
  It induces a map 
  $$\underleftarrow{\lim}_{\Xi'} \Xi'
  \to 
  \underleftarrow{\lim}_{\Xi} \Xi,$$
  where $\Xi'$ (resp. $\Xi$) runs through all fan structures of $N'_\R$ (resp. $N_\R$).
  Under the bijection in Lemma \ref{lem;trop;adic;another;express;NR}, 
  this map coincides with a map 
  $$\bigsqcup_{r\geq 0} J_r (N') /\sim_{I_r} \to \bigsqcup_{r\geq 0} J_r (N) /\sim_{I_r}$$
  given by $\psi$.
  Hence under the bijection in Corollary \ref{homomorphisms to totally ordered abelian groups}, it also coincides with a map
  $$-\circ \psi \colon \Val M' \to \Val M.$$
   
\section{Tropicalizations}\label{sec tropicalizations}
   In this section, we 
   shall recall 
   \textit{tropicalizations} of  Berkovich analytifications of algebraic varieties (Subsection \ref{subsec;trop;Ber})  
   and tropical compactifications (Subsection \ref{subsec;trop;triv;val}). 
   See also Section \ref{sec;projective;line}
   for tropicalizations of the affine line. 
   We also introduce and study 
   tropicalizations of Zariski-Riemann spaces (Subsection \ref{subsection Tropicalizations of Zariski-Riemann spaces}) and Huber's adic spaces (Subsection \ref{subsec;trop;adic}).
   (Our tropicalizations of adic spaces are  different from Foster-Payne's adic tropicalizations, see \cite{Fos16}.)

 \subsection{Tropicalizations of Berkovich analytic spaces}\label{subsec;trop;Ber}
  We recall basics of \textit{tropicalizations} of Berkovich analytic spaces, see \cite{Gub13}, \cite{GRW16}, \cite{GRW17}, and \cite{Pay09}.
  Let $(L,v_L \colon L^{\times} \to \R)$ be a complete valuation field of height $\leq 1$. We put $\Gamma_L$ its value group.
  In this subsection, every algebraic variety is defined over $L$.
  Let $\Sigma$ be a fan in $N_\R$, and $T_{\Sigma}$ the normal toric variety over $L$ corresponding to $\Sigma$.
  
  For a cone $\sigma \in \Sigma$, 
  the \textit{tropicalization map} $$\Trop \colon O(\sigma)^{\mathrm{Ber}} \rightarrow N_{\sigma,\R} = \Hom(M \cap \sigma^{\perp} ,\R)$$
  is the proper surjective continuous map given by the restriction
  $$ \Trop(v_x):= v_x|_{M \cap \sigma^{\perp}}  \colon M \cap \sigma^{\perp}  \rightarrow  \R $$
  ($ v_x \in O(\sigma)^{\mathrm{Ber}}$).
  We define the tropicalization map $$\Trop \colon T_{\Sigma}^{\mathrm{Ber}}=\bigsqcup_{\sigma \in \Sigma }  O(\sigma)^{\mathrm{Ber}} 
  \rightarrow 
  \bigsqcup_{\sigma \in \Sigma } N_{\sigma,\R}$$
  as their direct sum, which
   is  proper, surjective, and  continuous.
  
   For a morphism $\varphi \colon X \rightarrow T_{\Sigma} $ from an algebraic variety $X$ over $L$ to $T_{\Sigma}$, the image $\Trop(\varphi(X^{\mathrm{Ber}}))$ of $X^{\mathrm{Ber}}$  is called a \textit{tropicalization} of $X^{\mathrm{Ber}}$ (or $X$).
  For simplicity, we often write $\Trop(\varphi(X))$ instead of $\Trop(\varphi(X^{\Ber}))$.
  When $\varphi \colon X \rightarrow T_{\Sigma}$ is a closed immersion,
  the tropicalization $\Trop(\varphi (X))$ is a finite union of $(\dim X)$-dimensional $ \Gamma_v$-rational polyhedra.

  For a toric morphism $\psi \colon T_{\Sigma'} \to T_{\Sigma}$,
  there exists a continous map $\Trop(T_{\Sigma'}) \to \Trop(T_{\Sigma})$
  inducing a commutative diagram
  $$\xymatrix{
  	T_{\Sigma'}^{\Ber} \ar@{>>}[d]^-{\Trop} \ar[r]^-{\psi} & T_{\Sigma}^{\Ber}\ar@{>>}[d]^-{\Trop}\\
  	\Trop(T_{\Sigma'}) \ar[r]& \Trop(T_{\Sigma}).
  }$$
  We also denote it by $\psi $.
  
  Tropicalizations do not change under base extensions, i.e.,
  for an extension $L'/L$ of complete valuation fields of height $\leq 1$, we have a commutative diagram
  $$\xymatrix{
  	X^{\Ber}_{L'}\ar@{>>}[d] \ar[r]^-{\varphi_{L'}} &T^{\Ber}_{\Sigma,L'} \ar@{>>}[d] \ar@{>>}[dr]^-{\Trop} & & \\
  	X^{\Ber} \ar[r]^-{\varphi} & T^{\Ber}_{\Sigma} \ar@{>>}[r]^-{\Trop} & \bigsqcup_{\sigma \in \Sigma } N_{\sigma,\R},
  }$$
  where $(-)_{L'}$ means the base change to $L'$.
  In particular, we have
  $$\Trop(\varphi_{L'}(X_{L'})) = \Trop(\varphi(X)) \subset \bigsqcup_{\sigma \in \Sigma } N_{\sigma,\R}.$$
  
  When $\varphi \colon X \to T_{\Sigma}$ is a closed immersion,
  we put $\Sk_{\varphi}(X) \subset X^{\Ber}$ 
  the union of the Shilov boundaries of fibers $(\Trop \circ \varphi)^{-1} (a)$ $(a \in \Trop (\varphi (X)) )$,
  a \textit{tropical skeleton} of $X$.
  When there is no confusion, we simply denote it  by $\Sk(X)$.
  
  Let 
  $\sigma \in \Sigma$ be a cone,
  $a \in \Trop(\varphi(X)) \cap N_{\sigma,\R}$ a point,
  and $L'/L$ be  an extension of complete valued field 
  such that $a \in  N_\sigma \otimes_\Z \Gamma_{L'}$.
  Then the \emph{initial degeneration} $\ini_{a} X$ (\cite[Subsection 3.5]{GRW17}) is the special fiber of a natural admissible formal model 
  of a strictly $L'$-affinoid domain $\Trop^{-1}(a) \cap X_{L'}^{\Ber}$.
  The reduction map 
  $\Trop^{-1}(a) \cap X_{L'}^{\Ber} \to \ini_a X$ 
  is a surjective, and functorial (\cite[Proposition 2.17]{GRW17}). The inverse image of generic points is the Shilov boundary (\cite[Subsection 2.13]{GRW17}). 
  
  \begin{lem}[{\cite[Lemma 4.4]{GRW17}}]\label{skeleton base change}
  For a complete valued field extension $L'/L$,
    we have a surjection 
      $ \Sk_{\varphi_{L'}} (X_{L'}) 
      \to \Sk_{\varphi} (X_{L})$.
  \end{lem}

  \begin{lem}\label{skeleton functorial}
  	Let $ X \subset \G_m^n$ be an irreducible algebraic subvariety over $L$, 
  	and $\psi \colon \G_m^n \to \G_m^r$ a morphism given by monic monomials 
  	such that the closure $X':= \overline{\psi (X)} \subset \G_m^r$ is of the same dimension as $X$.
  	Then the natural morphism $\psi \colon X^{\Ber} \to X'^{\Ber}$
  	satisfies $\psi (\Sk X ) \supset \Sk X'$.
  \end{lem}
  \begin{proof}
    By Lemma \ref{skeleton base change},
  	  we may assume that $\Gamma_L =\R$. 
  	  Since Shilov boundaries are not contained in lower dimensional subvarieties, 
  	  we have $\psi (X^{\Ber}) \supset \Sk X'$.
  	  Since reduction maps are functorial (\cite[Proposition 2.17]{GRW17}) and 
  	  $$\dim \ini_a X = \dim X = \dim X' =\dim  \ini_{\psi (a)} X'$$ 
  	  ($a \in \Trop(X)$), 
  	  we have $\Sk X   \supset \psi^{-1} (\Sk X') $.
  	  Hence the assertion holds.
  \end{proof}
  
  Let $Y$ be an algebaric variety over $L$.
  In the rest of this subsection, we assume that $L$ is trivially valued.
  \begin{dfn}\label{lem;X;circ}
  We put $Y^{\circ}  $ the subset of $Y^{\Ber}$
  consisting of
  valuations $v$ such that there exists a natural morphism $\Spec \O_v \to Y$.
  \end{dfn}
  
  When $Y$ is proper, by the valuative criterion of properness, we have $Y^{\circ} =Y^{\Ber}$.
  
  \begin{rem}\label{rem;X;circ;cpt}
  When there is a closed immersion $\varphi \colon Y \to T_{\Sigma}$, 
  we have
  $$Y^{\circ}= 
  \bigcup_{  P \in \Lambda : \text{compact}} (\Trop \circ \varphi)^{-1}(P),$$
  where $\Lambda$ is a (fixed) fan structure of $\Trop(\varphi (Y))$.
  This can be seen by the theory of toric geometry.
  In particular, since $\Trop \circ \varphi$ is proper, the subset $Y^{\circ} $ is compact. 
  \end{rem}
   
 \subsection{Tropical compactifications}\label{subsec;trop;triv;val}
  In this subsection, we briefly recall tropical compactifications introduced by Tevelev \cite{Tev07} in the trivially valued (algebraically closed) case and by Gubler \cite{Gub13} in general. 
  Let $X \subset \G_m^n=\Spec L[M]$ be a pure-dimensional closed subvariety over a complete valued field $(L,v\colon L^{\times} \to \R)$.

  In the trivially valued and reduced case, we shall use Tevelev's definition.

  \begin{dfn}\label{dfn;trop;cpt}
  	When $L$ is trivially valued and $X$ is reduced, 
  	a \emph{tropical fan} $\Sigma$ for $X$ is a fan in $N_\R$ 
  such that  the multiplication map
  $$ \G_m^n \times \overline{X} \to T_{\Sigma} $$
  is faithfully flat and $\overline{X} $ is proper, 
  where 
  the closure $\overline{X}$ is taken in the toric variety $T_{\Sigma}$ over $L$ corresponding to the fan $\Sigma$. 
  In this case, $\overline{X}$ is called a \emph{tropical compactification} of $X$.
  \end{dfn}
  
  When $v(L^{\times}) \neq \{0\}$,
  we put $\T := \Spec \O_v [M]$ a torus over the valuation ring $\O_v$, 
  and  for a  $v(L^{\times})$-admissible fan $\Sigma$ in $N_\R \times \R_{\geq 0}$, 
  put $\T_{\Sigma}$ the corresponding normal $\T$-toric scheme over $\O_v$ (\cite[7.7]{Gub13}).
  \begin{dfn}\label{2 dfn;trop;cpt}
  When $v(L^{\times}) \neq \{0\}$,
  	a \emph{$v(L^{\times})$-admissible tropical fan} for $X$ 
  	is a  $v(L^{\times})$-admissible fan $\Sigma$ in $N_\R \times \R_{\geq 0}$
  	such that 
  	$\Sigma_1 := \{ \sigma \cap (N_\R \times \{ 1 \}) \}_{\sigma \in \Sigma}$ 
  	is a polyhedral complex structure of $\Trop(X) \times \{1\}$, 
  	and 
  	there is a closed subscheme 
  	$\F \subset \T \times_{L^{\circ}} \T_{\Sigma}$ satisfying the following properties: 
  	\begin{itemize}
  		\item the second projection induces a faithfully flat morphism $f\colon \F \to \T_{\Sigma}$, and 
  		\item the morphism 
  		     $$ \Phi \colon \T \times_{L^{\circ}} \T_{\Sigma} \ni (t,x) \mapsto 
  			 (t^{-1},tx) \in 
  			 \T \times_{L^{\circ}} \T_{\Sigma}$$
  			induces an isomorphism 
  			$\G_m^n \times X \cong f^{-1}(\G_m^n)$.
  	\end{itemize}
  	In this case, we call the clousre $\X$ of $X$ in $\T_{\Sigma}$
  	a \emph{tropical compactification}.
  \end{dfn}
  
  Definition \ref{2 dfn;trop;cpt} also works in trivially valued case, 
  and when $X$ is reduced, it coincides with Definition \ref{dfn;trop;cpt} (\cite[Remark 12.2]{Gub13}).

  \begin{thm}[{\cite[Theorem 12.3]{Gub13}}]
  	When $L$ is trivially valued and $X$ is reduced (resp. $L$ is non-trivially valued), 
  there exists a tropical 
  (resp. a $v(L^{\times})$-admissible tropical) fan $\Sigma$ 
  for $X$.
  \end{thm}
  
  \begin{rem}\label{rem;trop;cpt;property}
  \begin{enumerate}
  \item (\cite[Proposition 12.5]{Gub13}) The support of a tropical fan for $X$ is  $\Trop(X)$.
  \item (\cite[Proposition 12.4]{Gub13}) Any refinement of  a tropical 
  (resp. a $v(L^{\times})$-admissible tropical) fan 
  for $X$  is also 
   a tropical 
  (resp. a $v(L^{\times})$-admissible tropical) fan for $X$. 
  \end{enumerate}
  \end{rem}

  \begin{prp}[{\cite[Proposition 2.3]{Tev07}}]\label{rem,trop,cpt,prp}
  	When $L$ is trvially valued, 
  for a fan $\Sigma$ in $N_{\R}$,
  the closure $\overline{X}$ of $X$ in $T_{\Sigma}$ is proper
  if and only if $\Trop(X)$ is contained in the support of $\Sigma$.
  \end{prp}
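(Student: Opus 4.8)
The plan is to prove both implications with the valuative criterion of properness. Since $\overline{X}$ is a closed subscheme of the separated finite-type $K$-scheme $T_\Sigma$, it is automatically separated and of finite type over $K$, so in each direction only the lifting property along valuation rings is at stake.

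First I would do the implication ``$\overline{X}$ proper $\Rightarrow \Trop(X)\subset\lvert\Sigma\rvert$''. Let $w\in\Trop(X)$; if $w=0$ then $w\in\lvert\Sigma\rvert$ since $\{0\}\in\Sigma$, so assume $w\neq0$. By definition of the tropicalization, $w=\Trop(v)$ for some point $v\in X^{\Ber}$; unwinding, $v$ is a nontrivial valuation of height $\le 1$, trivial on $K$, on a field $F$ carrying a $K$-morphism $\Spec F\to X\subset\G_m^n$, and $w=v\circ\chi$ where $\chi\colon M\to F^\times$ is the tuple of coordinate characters. Because $\overline{X}$ is proper, the valuative criterion (valid for the valuation ring $\O_v$, not just for discrete ones, once properness is known) provides a morphism $\Spec\O_v\to\overline{X}\subset T_\Sigma$ extending $\Spec F\to X$; its closed point lands in some orbit $O(\tau)$, $\tau\in\Sigma$. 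The orbit--cone correspondence together with the identification of the center of a valuation on a toric variety --- this is precisely the computation behind Remark \ref{rem;trop;Ber;center}, and that computation does not use that $\overline{X}$ is a tropical compactification --- shows that $\tau$ is the cone with $\Trop(v)\in\relint(\tau)$. Hence $w\in\tau\subset\lvert\Sigma\rvert$.

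For the converse, suppose $\Trop(X)\subset\lvert\Sigma\rvert$. By Tevelev's existence theorem \cite[Theorem 1.2]{Tev07} pick a fan $\Sigma_0$ with the closure of $X$ in $T_{\Sigma_0}$ a tropical compactification, so $\lvert\Sigma_0\rvert=\Trop(X)$ by Remark \ref{rem;trop;cpt;property}(1). Form the common refinement $\Sigma_1:=\{\sigma\cap\sigma_0\mid\sigma\in\Sigma,\ \sigma_0\in\Sigma_0\}$; it is again a fan (closure under faces comes from $\sigma^\vee+\sigma_0^\vee=(\sigma\cap\sigma_0)^\vee$), and $\lvert\Sigma_1\rvert=\lvert\Sigma\rvert\cap\lvert\Sigma_0\rvert=\Trop(X)=\lvert\Sigma_0\rvert$, the last equality being the only use of the hypothesis. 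Since each cone of $\Sigma_1$ sits inside a cone of $\Sigma_0$, the fan $\Sigma_1$ refines $\Sigma_0$, so by Remark \ref{rem;trop;cpt;property}(2) the closure $\overline{X}^{(1)}$ of $X$ in $T_{\Sigma_1}$ is still a tropical compactification and hence proper over $K$ by Definition \ref{dfn;trop;cpt}. On the other hand each cone of $\Sigma_1$ also sits inside a cone of $\Sigma$, so the identity of $N$ induces a toric morphism $f\colon T_{\Sigma_1}\to T_\Sigma$ which is the identity on the dense torus $\G_m^n$. The restriction $f|_{\overline{X}^{(1)}}\colon\overline{X}^{(1)}\to T_\Sigma$ is proper (factor it through its graph, a closed immersion since $T_\Sigma$ is separated, followed by a base change of $\overline{X}^{(1)}\to\Spec K$), so its image is a closed irreducible subset of $T_\Sigma$ whose generic point is that of $X$; therefore this image is exactly $\overline{X}$. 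Consequently $\overline{X}^{(1)}\to\overline{X}$ is proper (cancel through the closed immersion $\overline{X}\hookrightarrow T_\Sigma$) and surjective, and since $\overline{X}^{(1)}\to\Spec K$ is proper, so is $\overline{X}\to\Spec K$.

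I expect the subtle point to be in the converse: one must obtain properness of the intermediate compactification $\overline{X}^{(1)}$ from Tevelev's existence theorem plus stability under refinement, not tautologically from ``a closure in a toric variety whose support is $\Trop(X)$'' --- that would assume exactly what is to be proved. The rest is routine: verifying that the common refinement of two fans is a fan, that a toric morphism is induced and restricts to the identity on the torus, and the properness/surjectivity bookkeeping above. (A more self-contained but longer route avoids Tevelev's theorem by a direct valuative-criterion argument, reducing a $\Spec F$-point meeting an orbit $O(\tau)$ to a point on the dense torus of the smaller toric variety $V(\tau)=T_{\Star(\tau)}$; there the obstacle becomes identifying $\Trop(\overline{X}\cap O(\tau))$ with the star of $\Trop(X)$ along $\tau$, which the hypothesis again places inside $\lvert\Star(\tau)\rvert$.)
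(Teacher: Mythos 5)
The paper gives no argument for this statement at all — it is imported wholesale from \cite[Proposition 2.3]{Tev07} — so your proposal can only be measured against the literature, and on its own terms it does check out. Your forward direction is the standard valuative argument: properness of $\overline{X}$ gives the lift $\Spec\O_v\to\overline{X}$ for an arbitrary (not necessarily discrete) valuation ring, since the lifting property for all valuation rings is a consequence of properness, and the orbit--cone computation identifying the orbit of the center with the cone whose relative interior contains $\Trop(v)$ indeed uses nothing about tropical compactifications. Your converse takes a genuinely different route from the usual one: you deduce it from the two other quoted facts (existence of a tropical compactification and Remark \ref{rem;trop;cpt;property}) via the common refinement $\Sigma_1$, and the steps you call routine are in fact fine — $\Sigma_1$ is a fan, it refines $\Sigma_0$ because the hypothesis gives $\lvert\Sigma_1\rvert=\Trop(X)=\lvert\Sigma_0\rvert$, the toric morphism $T_{\Sigma_1}\to T_{\Sigma}$ restricts to a proper surjection of $\overline{X}^{(1)}$ onto $\overline{X}$, and a separated finite-type $K$-scheme receiving a surjection from a proper one is proper. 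The one caveat is provenance rather than logic: in \cite{Tev07} both Theorem 1.2 and Proposition 2.5 are established \emph{using} Proposition 2.3, whose own proof completes $\Sigma$ to a complete fan and invokes the criterion that $\overline{X}$ meets $O(\tau)$ exactly when $\relint(\tau)$ meets $\Trop(X)$ (the two-sided version of the statement appearing here as Lemma \ref{rem;intersection;subcone;trop'n}); so as a stand-alone proof of the cited result your converse is circular at the level of the sources, even though it is a legitimate deduction inside this paper, which treats all three statements as black boxes. The direct route you sketch in your closing parenthesis — complete the fan and control which boundary orbits the closure meets — is essentially Tevelev's actual argument and is what you would need to break that dependency; a purely valuative converse is also possible, but then you must handle test points that do not factor through the dense torus (or justify reducing to those that do), which is precisely what your sketch leaves open.
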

  
  \begin{rem}[{\cite[Proposition 12.6]{Gub13}}]
  \label{trop cpt dimensions}
  For a tropical 
  (resp. a $v(L^{\times})$-admissible tropical) 
  fan $\Sigma$ for $X$ and $\sigma \in \Sigma$, the intersection $\overline{X} \cap O(\sigma)$ is non-empty and of pure dimension $ (\dim X - \dim \sigma)$
  (resp. $ (\dim X - \dim \sigma +1 )$).
  \end{rem}
  
  \begin{rem}\label{tropical compactifications and initial degeneration}
  We assume $v(L^{\times}) \neq \{0\}$.
  	Let $\Sigma$ be a $v(L^{\times})$-admissible tropical fan for $X$,  
    $\sigma \in \Sigma$ a cone, 
  	$\sigma_1 =\sigma \cap (N_{\R }\times \{ 1\} ) \in \Sigma_1$,
  	and $x \in  \relint \sigma_1  \subset \Trop (X)$.
  	The map  
  	$$\Trop^{-1}(x) \cap X^{\Ber} \to \X \cap O(\sigma)$$
  	of taking centers is surjective. 
  	In fact, by extending the base field $L$,  this map factors through 
  	$$\Trop^{-1}(x) \cap X^{\Ber} \to \ini_x X \to \X \cap O(\sigma),$$
  	where 
  	the first map is a reduction map, which is surjective by \cite[Proposition 2.17]{GRW17}, 
  	and 
  	the second map is, 
  	by \cite[Remark 12.7]{Gub13},  
    a morphism of algebraic varieties over the residue field 
  	of the form 
  	$$\ini_x X \cong (\X \cap O(\sigma)) \times \G_m^s 
  	\twoheadrightarrow \X \cap O(\sigma)$$
  	($s \in \Z_{\geq 0} $), where the second morphism is the first projection.
  \end{rem}
   
 \subsection{Tropicalizations of Zariski-Riemann spaces}\label{subsection Tropicalizations of Zariski-Riemann spaces}
  In this subsection, we shall introduce tropicalizations of Zariski-Riemann spaces.
  Let $K$ be a trivially valued field.
  Let $x \in \G_m^n = \Spec K[M]$,
  and $\overline{\{x\}}\subset \G_m^n$ the closure.
  We put $k(x)$ the residue field. 
  For a fan structure $\Lambda$ of $\Trop ( \overline{\{x\}})$, 
  we put 
  $\overline{\{x\}}^{\Lambda} \subset T_{\Lambda}$
  the closure in the toric variety $T_{\Lambda}$ corresponding to $\Lambda$.
  By Proposition \ref{rem,trop,cpt,prp},
  the algebraic variety 
  $\overline{\{x\}}^{\Lambda} $
  is proper and intersects with any orbit $O(\lambda)$ ($\lambda \in \Lambda$).
  
  \begin{dfn}\label{dfn:trop'n:ZRspace}
  For $v \in \ZR (\Spec k(x)/K)$, we put 
  $\Trop_{\Lambda}^{\ad}(v) \in \Lambda$
  the cone such that 
  the image of the maximal ideal under 
  the natural morphism
  $$\Spec \O_v \to 
  \overline{\{x\}}^{\Lambda} \subset T_{\Lambda}$$
  is contained in the orbit $O( \Trop_{\Lambda}^{\ad} (v) )$.
  This induces a surjective map
  $$ \Trop_{\Lambda}^{\ad} \colon  \ZR (k(x)/K) \twoheadrightarrow \Lambda. $$
  By Proposition \ref{ZRhomeo},
  we have a surjective map
  $$\Trop^{\ad}  \colon \ZR(k(x)/K) \twoheadrightarrow \underleftarrow{\lim} \Lambda,$$
  called a \emph{tropicalization map} of the Zariski-Riemann space $\ZR(k(x)/K)$,
  where $\underleftarrow{\lim} \Lambda$ is the inverse limit of all
  fan structures $\Lambda$ of $\Trop(\overline{\{x\}}) $ as sets. 
  (The maps $\Trop^{\ad}_{\Lambda}$ and $\Trop^{\ad}$ are continuous with respect to some natural topology of $\Lambda$ and $\underleftarrow{\lim} \Lambda$, but we do not use them in this paper.)
  \end{dfn}
  
  Any fan structure of $\Trop(\overline{\{x\}}) $ has a refinement which is a subfan of a given fan structure $\Xi$ of $N_\R$.
  Hence we have a natural injective map
  $$\underleftarrow{\lim} \Lambda \hookrightarrow \underleftarrow{\lim} \Xi,$$
  where 
  $\Xi$ runs through all fan structures of $N_\R$.
  We identify $\underleftarrow{\lim} \Lambda $ and its image.
  
  For a valuation $v \in \ZR (k(x)/K)$,
  the composition 
  $$  M \to k(x)^{\times} \xrightarrow{v} \Gamma_v$$
  is a group homomorphism to a totally ordered group $\Gamma_v$.
  (Here for simplicity, we identify $v \in \ZR (L/K)$ and a representative.)
  By Corollary \ref{homomorphisms to totally ordered abelian groups},
  the group homomorphism $M \to \Gamma_v$ can be considered as an element in $J_r / \sim_{I_r}$ for some $r$.
  
  \begin{lem}\label{rem;trop;adic;l_1}
  	For $v \in \ZR (L/K)$, 
  	the element
  $\Trop_{\Lambda}^{\ad} (v) \in \underleftarrow{\lim} \Lambda (\subset \underleftarrow{\lim} \Xi)$
  is the image of 
  $  M \to \Gamma_v$
  under the bijection in 
  Lemma \ref{lem;trop;adic;another;express;NR}.
  \end{lem}
  \begin{proof}
  This easily follows from the definition of 
  $\Trop_{\Lambda}^{\ad}$.
  \end{proof}
   
 \subsection{Tropicalizations of adic spaces over trivially valued fields}\label{subsec;trop;adic}
  In this subsection,
  we study tropicalizations and tropical skeletons of adic spaces associated with algebraic varieties over a trivially valued field $K$.
  
  We define tropicalizations as the direct sum of tropicalizations of Zariski-Riemann spaces.
  Let $X $ be a closed subvariety of a torus $\G_m^n =\Spec K[M]$ over $K$,
  and
  $\Lambda $ a fan structure of  $\Trop(X) \subset N_{\R}$.
  For $x \in X$, 
  we define 
  $$\Trop_{\Lambda}^{\ad} \colon \ZR (k(x)/K) \to \Lambda$$
  in the same way as Definition \ref{dfn:trop'n:ZRspace}.
  
  \begin{dfn}
  We define 
  $$ \Trop_{\Lambda}^{\ad} \colon X^{\ad} 
  = \bigsqcup_{x \in X} \ZR (k(x) / K) \twoheadrightarrow \Lambda$$
  the disjoint union of 
  $\Trop_{\Lambda}^{\ad}$ on $\ZR (k(x)/K)$.
  We have  a surjective map
  $$\Trop^{\ad} \colon X^{\ad} \twoheadrightarrow \underleftarrow{\lim } \Lambda $$
  called a \emph{tropicalization map} of $X^{\ad}$,
  where $\underleftarrow{\lim} \Lambda$ is the inverse limit of all
  fan structures $\Lambda$ of $\Trop(X) $ as sets. 
  \end{dfn}
  
  Let $Y$ be a closed subvariety  of a toric variety $T_\Sigma$ over $K$,
  and
  $\Xi $ a  fan structure of  $\Trop(Y) $.
  For a cone $\sigma \in \Sigma$,
  we put 
  $$\Xi \cap \Trop (O(\sigma)) 
  := \{ \xi \cap \Trop (O(\sigma)) \mid \xi \in \Xi , \ 
  \relint \xi \subset \Trop O(\sigma) \},$$
  a fan in $\Trop (O(\sigma))$.
  We identify $\Xi \cap \Trop (O ( \sigma))$
  with 
  $$\{\xi \in \Xi \mid \relint \xi \subset \Trop O(\sigma) \}$$
  by taking closures in $\Trop (T_{\Sigma})$.
  
  \begin{dfn}
  We define 
  $$\Trop^{\ad}_{\Xi} \colon Y^{\ad} 
  = \bigsqcup_{\sigma \in \Sigma}  (Y \cap O(\sigma))^{\ad}
  \twoheadrightarrow
   \bigsqcup_{\sigma \in \Sigma}  \Xi \cap \Trop (O (\sigma))
  = \Xi$$
  the disjoint union of $\Trop^{\ad}_{\Xi \cap \Trop(O(\sigma))} $.
  We have a surjective map
  $$\Trop^{\ad} \colon Y^{\ad} \twoheadrightarrow \underleftarrow{\lim } \Xi $$
  called a \emph{tropicalization map} of $Y^{\ad}$,
  where $\underleftarrow{\lim} \Xi$ is the inverse limit of all
  fan structures $\Xi$ of $\Trop(Y) $ as sets. 
  \end{dfn}
  
  We put 
  $$\Trop^{\ad}(Y^{\ad}) \cap \Trop (O(\sigma)) := 
  \underleftarrow{\lim }\Xi \cap \Trop (O(\sigma)) \cong \Trop^{\ad} ((Y \cap O(\sigma))^{\ad}) .
  $$
  We have 
  $$\Trop^{\ad}(Y^{\ad}) 
  =\bigsqcup_{\sigma \in \Sigma } \Trop^{\ad}(Y^{\ad}) \cap \Trop (O(\sigma)). $$
  
  By Lemma \ref{lem;trop;adic;another;express;NR}, 
  there is a natural map 
  $$\Trop(Y) \to \Trop^{\ad}(Y^{\ad})$$
  whose image consists of points of height $0$ and $1$.
  By Lemma \ref{rem;trop;adic;l_1},
  we have a commutative diagram
  $$\xymatrix{
  Y^{\Ber} \ar[r] \ar[d]^-{\Trop} &
  Y^{\ad} \ar[d]^-{\Trop^{\ad}} \\
  \Trop(Y) \ar[r] &
  \Trop (Y^{\ad}).
  }$$
   
\section{Tropical cohomology}\label{sec;trocoho}
   In this section, we recall \textit{tropical cohomology}.
   Recall that 
   $M$ is a free $\Z$-module of finite rank and $N:=\Hom(M,\Z)$.
   
 \subsection{Tropical cohomology of polyhedral complexes}\label{subsec;trop;cohomology;fan}
  We recall tropical cohomology introduced by Itenberg-Katzarkov-Mikhalkin-Zharkov \cite{IKMZ19}.
  See also \cite{JSS19}.
  Let $T_{\Sigma}$ be the toric variety corresponding to a fan $\Sigma$ in $N_\R$.
  Let $\Lambda $ be a polyhedral complex in $\Trop(T_{\Sigma})$.
  Recall that for a  subset $B \subset \Trop(T_{\Sigma})$,
  we put
  $\Lambda \cap B := \{P \cap B\}_{P \in \Lambda }.$
  For $P \in \Lambda$,
  we put $\sigma_P \in \Sigma$ the cone such that $\relint(P) \subset N_{\sigma_P,\R}$.
  
  Let $p \geq 0$ be a non-negative integer.
  For $P\in \Lambda$, we put  
  $$\Tan_\Q P := \Tan_\Q (P \cap N_{\sigma_P,\Q})
     := \sum_{x,y\in P\cap  N_{\sigma_P,\Q}} \Q ( x-y ) \subset N_{\sigma_P,\Q}$$
  a $\Q$-linear subspace of $N_{\sigma_P,\Q}$,
  $$F_p(P,\Lambda):= \sum_{\substack{P' \in \Lambda\cap N_{\sigma_P,\R}\\ \relint(P) \subset P'}} \bigwedge^p \Tan_\Q (P') \subset \bigwedge^p N_{\sigma_P,\Q},$$ 
  and  
  $$F^p(P,\Lambda):= \bigwedge^p  (M \cap \sigma_P^{\perp} )_\Q 
  \big/ 
  \big\{f \in \bigwedge^p (M \cap \sigma_P^{\perp})_\Q 
  \big| \alpha(f)=0 \ (\alpha \in F_p(P,\Lambda))
  \big\} ,$$
  where we identify $$\bigwedge^p N_{\sigma_P,\Q} \cong \Hom \big(\bigwedge^p  (M \cap \sigma_P^{\perp} )_\Q, \Q \big).$$
  We have
  $$F^p(P,\Lambda) \cong \Hom (F_p(P,\Lambda),\Q).$$
  Since $F_p(P,\Lambda)$ (resp.  $F^p(P,\Lambda)$) depends only on the support $\lvert \Lambda \rvert$,
  we sometimes write $F_p(P,\lvert \Lambda \rvert)$ (resp.\ $F^p(P,\lvert \Lambda \rvert)$) instead of $F_p(P, \Lambda )$ (resp.\ $F_p(P,\Lambda)$).
  When there is no confusion, we simply write $F_p(P)$ (resp.\ $F^p(P)$).
  
  \begin{rem}\label{remdefofF_Pmap}
  Let $P_1,P_2 \in \Lambda$ with $P_2 \subset P_1$.
  Then we have $\sigma_{P_1} \subset \sigma_{P_2}$.
  \begin{itemize}
  \item When $\sigma_{P_1} =\sigma_{P_2}$, there exists a natural injection
  $$ i_{P_2 \subset P_1} \colon F_p(P_1) \hookrightarrow F_p(P_2).$$
  \item When $P_2 = P_1 \cap \overline{ N_{\sigma_{P_2},\R }},$
  the natural projection $ N_{\sigma_{P_1},\R } \twoheadrightarrow N_{\sigma_{P_2},\R }$ induces
  a morphism $$ i_{P_2 \subset P_1} \colon F_p(P_1) \twoheadrightarrow F_p(P_2).$$
  \item  In general, we put
  $$ i_{P_2 \subset P_1} := i_{P_2 \subset Q} \circ i_{Q \subset P_1} \colon F_p(P_1) \twoheadrightarrow F_p(Q) \hookrightarrow F_p(P_2),$$
  where $Q:= P_1 \cap \overline{N_{\sigma_{P_2},\R }}$.
  \end{itemize}
  \end{rem}
  
  Let $B \subset \lvert \Lambda \rvert$ be a locally closed subset.
  \begin{dfn}\label{deftrocoho}
  \begin{enumerate}
  \item For every cone $P \in \Lambda$,
  we put  $C_q(B\cap P)$ the free $\Q$-vector space generated by continuous maps $\gamma \colon \Delta^q \rightarrow B \cap P$ from the standard $q$-simplex $\Delta^q$.
  We put
  $$C_{p,q}(B,\Lambda) :=\bigoplus_{P \in \Lambda} F_p(P,\Lambda) \otimes_\Q C_q(B\cap P) / \text{(equivalence relation)},$$
  where the equivalence relation is generated by 
  $$ \alpha_{P_1} \otimes \gamma - i_{P_2 \subset P_1}(\alpha_{P_1}) \otimes \gamma$$
  for $P_1,P_2 \in \Lambda $ with $P_2 \subset P_1$, 
  $\alpha_{P_1} \in F_p(P_1,\Lambda)$, 
  and $\gamma \colon  \Delta^q \to B \cap P_2 \subset B \cap P_1$.
  We call its elements \emph{tropical $(p,q)$-chains}. \label{enu:def:tro:coho:fan}
  \item  For $\gamma \in C_q(B \cap P)$, we denote the usual boundary by $\partial(\gamma):=\sum_{i=0}^q (-1)^i \gamma^i$.
  For each $v \otimes \gamma \in F_p(P,\Lambda) \otimes C_q(B \cap P)$, we put
  $$\partial(v \otimes \gamma):= (-1)^p \sum_{i=0}^q (-1)^i v \otimes \gamma^i \in C_{p,q-1}(B,\Lambda).$$
  We obtain complexes $(C_{p,*}(B,\Lambda),\partial)$.
  \item We define the \emph{tropical homology groups} to be
  $$H_{p,q}^{\Trop}(B,\Lambda):= H_q(C_{p,*}(B,\Lambda),\partial).$$
  We put $(C^{p,*} (B, \Lambda), \delta)$ the dual complex of $(C_{p,*}(B,\Lambda),\partial)$.
  We call its cohomology groups
  $$H_{\Trop}^{p,q}(B,\Lambda) := H^q(C^{p,*}(B,\Lambda),\delta)$$
  the \emph{tropical cohomology groups} of $(B,\Lambda)$.
  \end{enumerate}
  \end{dfn}
  
  \begin{rem}
  For a refinement $\Lambda'$ of $\Lambda$ and any $p,q$, the natural map
  $$H_{\Trop}^{p,q}(B,\Lambda) \to H_{\Trop}^{p,q}(B,\Lambda')$$
  is an isomorphism. This follows from \cite[Proposition 2.8]{MZ13} (see \cite[Section 3]{JSS19} for a proof).
  We write $H_{\Trop}^{p,q}(\lvert \Lambda \rvert) :=H_{\Trop}^{p,q}(\lvert \Lambda \rvert,\Lambda)$ for short.
  \end{rem}
  
  \begin{rem}\label{remark stalk tropical variety}
  By \cite[Proposition 3.15]{JSS19},
  tropical cohomology group of $\lvert \Lambda \rvert$ is isomorphic to 
  sheaf cohomology of a sheaf 
  $\F_{\lvert \Lambda \rvert}^p:=\Ker(\C_{\lvert \Lambda \rvert}^{p,0}\to \C_{\lvert \Lambda \rvert}^{p,1})$ on ${\lvert \Lambda \rvert}$, 
  where $\C_{\lvert \Lambda \rvert}^{p,*}$ is the complex of 
  the sheafifications of the presheaves of tropical $(p,*)$-cochains.
  By \cite[Proposition 3.11]{JSS19},
  for $P\in \Lambda$ and $x\in \relint P$, 
  we have $\F_{{\lvert \Lambda \rvert},x}^p \cong F^p(P)$.
  \end{rem}
  
  For a subset $D \subset B$,
  we put
  $$C^{p,q}_{D}(B,\Lambda):= \Ker(C^{p,q}(B,\Lambda) \to C^{p,q}(B\setminus D,\Lambda)).$$
  We put $H_{\Trop,D}^{p,q}(B,\Lambda)$ its cohomology group.
   
 \subsection{Tropical cohomology of algebraic varieties}\label{subsec:trop:coh:hom}
  In this subsection, we recall tropical cohomology of an algebraic variety $X$ 
  over a complete valuation field $(L,v_L \colon L^{\times} \to \R)$.
  This was introduced by Jell \cite[Section 8]{Jel22}, 
  and is based on tropical charts, 
  which were given by Chambert-Loir-Ducros \cite{CLD12}, Gubler \cite{Gub16}, and Jell \cite{Jel16}.
  We put $\Gamma_v$ the value group of $(L,v_L)$.
  We fix a toric structure of each affine space.
  
  We define a sheaf $\mathscr{C}_X^{p,q}$  on $X^{\Ber}$ by, for each open subset $V \subset X^{\Ber}$, putting
  $\mathscr{C}_X^{p,q}(V)$ the set of equivalence classes of $(U_i,V_i,\varphi_i,\Lambda_i,\alpha_i)_i$ consisting of
  \begin{itemize}
  \item a Zariski open covering $  \{U_i\}_i $ of $X$ and an open covering $\{V_i \}_i $ of $V$,
  \item closed immersions $ \varphi_i \colon U_i \rightarrow \A^{n_i}$   such that $V_i =(\Trop \circ \varphi_i)^{-1}(\Omega_i) \subset U_i^{\Ber}$ for some open subsets $\Omega_i \subset \Trop(\varphi_i(U_i))$,
  \item $ \Gamma_v$-rational polyhedral complex structures  $  \Lambda_i $ of $\Trop(\varphi_i(U_i))$, and
  \item $\alpha_i \in C^{p,q}(\Trop(\varphi_i(V_i)),\Lambda_i)$
  \end{itemize}
  satisfying the following:
  for any $i,j$,
  there exists
  \begin{itemize}
  \item a Zariski open covering $\{U_{i,j,k}\}_k$ of $U_{i}\cap U_{j}$,
  \item closed immersions $\varphi_{i,j,k} \colon U_{i,j,k} \rightarrow \A^{n_{i,j,k}}$,
  \item toric morphisms $ \Psi_{(i,j,k),l} \colon \A^{n_{i,j,k}} \to \A^{n_l}$ ($l\in \{i,j\}$), and
  \item $ \Gamma_v$-rational polyhedral complex  structures $\Lambda_{i,j,k}$ of $\Trop(\varphi_{i,j,k}(U_{i,j,k}))$
  \end{itemize}
  such that
  \begin{itemize}
  \item for each $P \in \Lambda_{i,j,k}$ and $l\in \{i,j\}$,
  there exists $Q \in \Lambda_l $ containing $\Psi_{(i,j,k),l}(P )$,
  \item for $i,j,k$ and $l\in \{i,j\}$, the diagram 
  $$\xymatrix{
  & U_{i,j,k} \ar[d]^-{\varphi_{i,j,k}} \ar[dr]^-{\varphi_l} \\
   &\A^{n_{i,j,k}} \ar[r]_-{\Psi_{(i,j,k),l}} & \A^{n_l} \\
  }$$
  is commutative, and
  \item for $i,j,k$, we have
  \begin{align*}
  	\Psi_{(i,j,k),i}^* \alpha_i |_{\Trop(\varphi_{i,j,k}(V_i \cap V_j \cap U_{i,j,k}^{\Ber}))} &= \Psi_{(i,j,k),j}^* \alpha_j |_{\Trop(\varphi_{i,j,k}(V_i \cap V_j \cap U_{i,j,k}^{\Ber}))}\\
  												   &\in C^{p,q}(\Trop(\varphi_{i,j,k}(V_i \cap V_j \cap U_{i,j,k}^{\Ber})), \Lambda_{i,j,k}).
  \end{align*}
  \end{itemize}
  The equivalence relation is generated by
  $$(U_i,V_i,\varphi_i,\Lambda_i,\alpha_i)_i \sim (U'_j,V'_j,\varphi'_j,\Lambda'_j,\alpha'_j)_j $$
  satisfying the following:
  for each $j$, there exist $i(j)$ and a toric morphism $\psi_j \colon \A^{n'_j} \to \A^{n_{i(j)}}$
  such that
  \begin{itemize}
  \item $U_{i(j)}$ (resp.\ $V_{i(j)}$) contains $U'_j$ (resp.\ $V'_j$),
  \item the diagram
  $$\xymatrix{
  	U_{i(j)} \ar[r]^-{\varphi_{i(j)}} & \A^{n_{i(j)}}\\
  	U'_j \ar[u] \ar[r]^-{\varphi'_j} & \A^{n'_j} \ar[u]_-{\psi_j}
  }$$
  is commutative,
  \item for $P \in \Lambda'_j$, there exists $Q \in \Lambda_{i(j)}$ containing $\psi_j(P)$, and
  \item  $\alpha'_j = \psi_j^* \alpha_i|_{\Trop(\varphi'_j(V'_j))} $.
  \end{itemize}
  The coboundary map $\delta$ in Definition \ref{deftrocoho} induces a complex
  $$ \C_X^{p,0} \rightarrow \C_X^{p,1} \rightarrow \C_X^{p,2} \rightarrow \dots$$
  of sheaves on $X^{\Ber}$, which is exact by \cite[Proposition 3.15]{JSS19}.
  The cohomology groups
  $$H_{\Trop}^{p,q}(X):= \Ker(\C_X^{p,q}(X^{\Ber})\rightarrow \C_X^{p,q+1}(X^{\Ber}))/\Im(\C_X^{p,q-1}(X^{\Ber})\rightarrow \C_X^{p,q}(X^{\Ber}))$$
  are  called  the \textit{tropical cohomology groups} of $X$.
  
  We also use another expression (\cite{Jel16}, \cite{Jel22}) of tropical cohomology by embeddings of $X$ to toric varieties. 
  When $X$ has a closed immersion to a toric variety, 
  there are many closed immersions of $X$ to toric varieties \cite[Theorem 1.2]{FGP14}.
  In this case, 
  we define a sheaf $\C_{T,X}^{p,q}$ on $X^{\Ber}$ in a similar way to $\C_X^{p,q}$
  but using closed immersions $X \to T_{\Sigma_i}$ to a toric variety $T_{\Sigma_i}$ instead of pairs of open subvarieties $U_i \subset X$ and closed immersions $U_i \to \A^{n_i}$.
  There exists a natural isomorphism $\C_X^{p,q}\cong \C_{T,X}^{p,q}$ (\cite[Remark 8.2]{Jel22}).
  In particular, tropical cohomology $H_{\Trop}^{p,q}(X)$ is isomorphic to
  $$ H_{T,\Trop}^{p,q}(X)
  := \Ker(\C_{T,X}^{p,q}(X^{\Ber})\rightarrow \C_{T,X}^{p,q+1}(X^{\Ber}))/\Im(\C_{T,X}^{p,q-1}(X^{\Ber})\rightarrow \C_{T,X}^{p,q}(X^{\Ber})).$$ 
  When there is no confusion, we identify $\C_X^{p,q}\cong \C_{T,X}^{p,q}$ and $H_{\Trop}^{p,q}(X) \cong H_{T,\Trop}^{p,q}(X)$.
  
  The sheaf $\C^{p,q}_X $ is not flabby. 
  This is because
  for an open subset $\iota \colon U \hookrightarrow X$, a point $x \in X^{\Ber} \setminus U^{\Ber}$, 
  and $\alpha \in \C_U^{p,q}(U^{\Ber})$, 
  the element
  $\iota_* \alpha_x  $ 
  in the stalk $ (\iota_* \C_U^{p,q})_x$
  does not necessarily 
  come from finitely many tropical charts, and hence $\alpha$ does not necessarily extends to $x\in X^{\Ber}$.
  When $\alpha$ is given by finitely many tropical charts, this problem does not happen.
  In particular, we have the following.
  
  \begin{lem}\label{tropical p,q cochains sheaf c-soft}
  	The sheaf $\C^{p,q}_X $ is c-soft, i.e., for any compact subset $K\subset X^{\Ber}$, 
  	the natural map $$\Gamma (X^{\Ber}; \C^{p,q}_X ) \to \Gamma (K; \C^{p,q}_X )$$
  	is surjective.
  \end{lem}
  \begin{proof}
  	This is \cite[Lemma 8.10]{Jel22} when $X$ has an embedding to a toric variety, or $\C^{p,q}_X$ is with $\R$-coefficients. We shall see that a similar proof works in general. 
  	By \cite[Proposition 2.5.1]{KS90}, since $X^{\Ber}$ is Hausdorff, every element of $\Gamma (K; \C^{p,q}_X )$ is the restriction of an section on an open neighborhood of $K$.
  	Hence it suffices to show that 
  	for an open subset $V\subset X^{\Ber}$,
  	a section 
  	$g:=(U_i,V_i,\varphi_i,\Lambda_i,\alpha_i)_{i\in I} \in \C^{p,q}_X (V)$ with $\# I <\infty$ 
  	extends to $\C^{p,q}_X (X^{\Ber})$.
  	We consider extension by $0$, i.e., extensions by taking $0$ for tropical $(p,q)$-chains whose supports are not contained in tropicalizations of any subsets of $V$.
  	It suffices to show that 
    for 
     an affine open subvariety $U\subset X$,
     the section 
  	$g|_{U^{\Ber} \cap V}$ can extend by $0$ to $U^{\Ber}$.
  	(Then glueing is obvious.)
  	Since 
  	$g|_{U^{\Ber}\cap V} \in \C^{p,q}_{U}(U^{\Ber}\cap V)$ is given by finitely many tropical charts, it is given by a single closed immersion $ \varphi \colon U\to \A^r$, 
  	hence it can extend to $U^{\Ber}$ by $0$.
  \end{proof}
  
  By \cite[Proposition 3.15]{JSS19} and Lemma \ref{tropical p,q cochains sheaf c-soft}, tropical cohomology is the sheaf cohomology groups of a sheaf $\F_X^p:=\Ker(\C_X^{p,0}\to \C_X^{p,1})$ on $X^{\Ber}$. 
  For a closed subscheme $Z \subset X$,
  we put $$\C_{Z \subset X}^{p,q}
  :=\Ker(\C_X^{p,q}\rightarrow
  \pi_* \pi^* \C_X^{p,q} ),$$
  where $\pi \colon X^{\Ber}\setminus Z^{\Ber} \to X^{\Ber}$ is the inclusion,
  and
  $$H_{\Trop,Z}^{p,q}(X):= \Ker(\C_{Z \subset X }^{p,q}(X)\rightarrow \C_{Z \subset X}^{p,q+1}(X))/\Im(\C_{Z \subset X}^{p,q-1}(X)\rightarrow \C_{Z \subset X}^{p,q}(X)).$$

  In the rest of this subsection, we assume that $L$ is trivially valued.
  Recall that 
  $X^{\circ}  \subset X^{\Ber}$ is the subset
  consisting of
  valuations $v$ having natural morphisms $\Spec \O_v \to X$.
  We put $\C_{X^{\circ}}^{p,q}:= \C_{X}^{p,q}|_{X^{\circ}}$.
  For a closed subscheme $Z \subset X$,
  we also define sheaves 
  $$\C_{Z^{\circ} \subset X^{\circ}}^{p,q} 
  := \Ker (
  \C_{X^{\circ}}^{p,q} \to \pi^{\circ}_{*} \pi^{\circ,*} \C_{X^{\circ}}^{p,q} 
   ), $$
  where $\pi^{\circ} \colon X^{\circ} \setminus Z^{\circ} \to X^{\circ}$ is the inclusion.
  We define
  $ H_{\Trop}^{p,q}(X^{\circ})$ as the $q$-th cohomology group 
  of $\C_{X^{\circ}}^{p,*} (X^{\circ})$, which is the $q$-th cohomology group of 
  $\F_{X^\circ}^p := \F_X^p |_{X^{\circ}}.$
  We also define $ H_{\Trop,Z^{\circ}}^{p,q}(X^{\circ})$ similarly.
  
  When $X$ has a closed immersion to a toric variety, 
  since $X^{\circ}$ is compact (Remark \ref{rem;X;circ;cpt}),
  every sections of $\C_{X^{\circ}}^{p,*} (X^{\circ})$ comes from a single closed immersion to a toric variety. 
  In particular,
  since tropical cohomology of a suitable subset of a tropical variety 
  is 
  isomorphic to cohomology of sections of the sheafifications of the presheaves of tropical cochains (see \cite[Lemma 3.14]{JSS19}),
   we have 
  $$ H^{p,q}_{\Trop,T}(X^{\circ}) \cong \underrightarrow{\lim}_{\varphi} H^{p,q}_{\Trop}(\Trop(\varphi(X^{\circ}))) ,$$
  where the limit is indexed by 
  the category whose objects are
   closed immersions $\varphi \colon X \to T_{\Sigma}$ to toric varieties  
   and morphisms from $\varphi \colon X \to T_{\Sigma}$ to $\varphi' \colon X \to T_{\Sigma'}$
   are toric morphisms $\psi \colon T_{\Sigma' } \to T_{\Sigma} $ such that 
   $\psi \circ \varphi' = \varphi $.
  For a closed subscheme $Z \subset X$,
  we also have
  $$ H^{p,q}_{\Trop,T,Z^{\circ}}(X^{\circ})
  \cong \underrightarrow{\lim}_{\varphi}
  H^{p,q}_{\Trop,\Trop(\varphi(Z^{\circ}))}(\Trop(\varphi(X^{\circ}))).$$
  
  \begin{lem}\label{lem;X;circ;trocoho}
  We assume that $X$ has a closed immersion to a toric variety.
  Then we have a natural isomorphism
  $$ H^{p,q}_{\Trop,T}(X)
  \cong H^{p,q}_{\Trop,T}(X^{\circ}).$$
  \end{lem}
  \begin{proof}
  Note that 
  for a closed immersion $\varphi$ of $X$ into a toric variety, a fan structure $\Lambda$ of $\Trop(\varphi(X))$, and a subset $B \subset \Trop(\varphi(X))$ 
  such that 
  $\Trop(\varphi(X^{\circ})) \subset B$ 
  and there is 
  a strong deformation retraction
  $ \psi \colon B\times [0,1] \to B $ of $B $ onto $\Trop(\varphi(X^{\circ}))$ preserving the fan structure,
  i.e.,
  $$\psi((P \cap B,[0,1])) \subset P \cap B  \quad (P \in \Lambda),$$
  we have
  \begin{align}\label{equation retraction}
   H^{p,q}_{\Trop}(\Trop(\varphi(X^{\circ})), \Lambda ) 
  \cong  H^{p,q}_{\Trop}(B,\Lambda).
  \end{align}
  By Remark \ref{rem;X;circ;cpt},
  surjectivity follows from (\ref{equation retraction}) for $B =\Trop(\varphi(X))$.
  There is a sequence of compact subsets $X_1 := X^{\circ} \subset X_2 \subset \dots $ of $X^{\Ber}$
  such that $X^{\Ber} = \bigcup_i X_i$
  and $B= \Trop (\varphi (X_i))$ satisfies the above condition. 
  Let 
  $\alpha \in \C_{T,X}^{p,q}(X^{\Ber})$  be a cocycle.
  Then the restriction $\alpha|_{X_j}$ comes from a single closed immersion $\varphi_{\alpha,j} \colon X \to T_{\Sigma_{\alpha,j}}$ to a toric variety $T_{\Sigma_{\alpha,j}}$.  
  We may assume that there are toric morphisms $\psi_{i,i+1} \colon T_{\Sigma_{\alpha,i+1}} \to T_{\Sigma_{\alpha, i}}$ such that $\psi_{i,i+1} \circ \varphi_{\alpha,i+1} = \varphi_{\alpha,i}$.
  Then 
  since by (\ref{equation retraction}), 
  we have 
  $$
   H^{p,q}_{\Trop, \Trop (\varphi_{\alpha,j} (X_{j}) \setminus \Trop (\varphi_{\alpha,j} (X_{j-1}))}(\Trop(\varphi_{\alpha,j} (X_j)), \Lambda ) =0,$$
  when the restriction of $\alpha$ to $X^{\circ}$
  is the coboundary of some cochain, 
  the cycle $\alpha$
  is a coboundary of some cochain on $X^{\Ber}$.
  Hence injectivity holds.
  \end{proof}

  \begin{cor}\label{cor;X;circ;trocoho;T;Z}
  For a closed subscheme $Z \subset X$,
  we have
  $$ H^{p,q}_{\Trop,T,Z}(X) \cong H^{p,q}_{\Trop,T,Z^{\circ}}(X^{\circ}).$$
  \end{cor}
  \begin{proof}
  In a similar way to Lemma \ref{lem;X;circ;trocoho},
  the pull back map
  $$H^{q}(\Gamma(X^{\circ} \setminus Z^{\circ}, \C^{p,*}_{T,X^{\circ}})) \to
  H^{p,q}_{\Trop,T}((X \setminus Z)^{\circ})$$
  is an isomorphism. Hence the assertion holds.
  \end{proof}
  
  \begin{dfn}\label{stalks of F^p at higher valuations}
  	For $v \in X^{\ad}$, 
  we put
  $$\F^p_{X,v}
      := \underrightarrow{\lim}_{
  		\varphi \colon U \to \A^{n_\varphi} 
  		\ }
          F^p (\Trop^{\ad}_{\Lambda_\varphi}(\varphi(v)), \Trop (\varphi (U))),
  $$
  where 
  the limit is indexed by 
  the category 
  whose objects are 
  closed immersions $\varphi \colon U \to \A^{n_{\varphi}}$ 
  from affine open subvarieties $U$ of $X$ with $v \in U^{\ad}$ 
  to affine spaces endowed with toric structures,
  and morphisms are toric morphisms (in the opposito direction) of affine spaces compatible with closed immersions,
  and 
  $\Lambda_\varphi$ is a polyhedral complex structures of $\Trop(\varphi(U))$.
  \end{dfn}

  Obviously,
  for $x \in X^{\Ber}$, this ``stalk'' at the image under  the natural map $X^{\Ber} \to X^{\ad}$ coincides with the usual stalk $\F_{X,x}^p$ 
  of the sheaf $\F_X^p$.
   
\section{Stalks and tropical Milnor K-groups}\label{sec stalks, trop K}
   In this section,
   we shall study ``stalks'' of the sheaf $\F_X^p$.
   We shall also define and study a tropical analog of rational Milnor $K$-groups.
   Recall that $M$ is a free $\Z$-module of finite rank $n$.
   Let $K$ be a trivially valued field.
   Let $X$ be an algebraic variety over $K$. 

 \subsection{Stalks}
  We shall study the ``stalk'' $ \F^p_{X,v}$ at  $v \in X^{\ad}$ (Definition \ref{stalks of F^p at higher valuations}).
  We first consider the stalk when $v$ is a trivial valuation, i.e., $\Gamma_v =\{0\}$.
  
  Let $L/K$ be an extension of fields,
  and $\varphi \colon \Spec L \to \G_m^n = \Spec K[M]$ 
  be a morphism over $K$.
  We put $\overline{\varphi(\Spec L)} \subset \G_m^n$ the closure.
  The morphism $\varphi$ gives   a group homomorphism $\varphi \colon M \to L^{\times}$.
  We denote the wedge product of $\varphi \otimes_\Z \Q$ by 
  $\varphi \colon \wedge^p M_\Q \to \wedge^p (L^{\times})_\Q.$
  A valuation $v \in \ZR (L/K)$ is also a group homomorphism 
  $v \colon L^{\times} \to \Gamma_v$.
  We denote 
  the wedge product of 
  $v \otimes_\Z \Q $ by 
  $ \wedge^p v \colon \wedge^p (L^{\times})_\Q \to \wedge^p \Gamma_{v,\Q} .$
  
  \begin{lem}\label{lemFpexpressvalua}
  We have
  $$F^p(0 , \Trop ( \overline{\varphi(\Spec L)})) = \wedge^p  M_\Q/J_M,$$
  where $J_M $ is the $\Q$-vector subspace generated by $f \in \wedge^p M_\Q$
  such that $\wedge^p v (\varphi(f))=0$ for $v \in \ZR(L/K)$.
  
  Moreover, when $L = k(\varphi(\Spec L))$, 
  where $k(\varphi(\Spec L))$ is the residue field of the structure sheaf at $\varphi(\Spec L) \in \G_m^n$,
  we have 
  $$F^p(0 , \Trop ( \overline{\varphi(\Spec L)})) = \wedge^p  M_\Q/J'_M,$$
  where  $J_M'$ is the $\Q$-vector subspace generated by $f \in \wedge^p M_\Q$
  such that $\wedge^p v (\varphi(f))=0$ for $v \in \ZR(L/K)$ with
  $\Gamma_v \cong \Z^p ,$ 
  where $\Z^p$ is equipped with the lexicographic order.
  \end{lem}
  \begin{proof}
    The first assertion follows from Remark \ref{rem;trop;adic;l_r;P;span;same}
    and Lemma \ref{rem;trop;adic;l_1}.
    The second assertion follows from vertical generalizations (Subsection \ref{subsec;val})
    and the  fact that 
    for $w \in \ZR(L/K)$ with $\height (w) = \trdeg (L/K)$, 
    by \cite[Chapter 6. Section 10.3. Corollary 3 of Theorem 1]{Bou72},
    we have $\Gamma_w = \Z^{\trdeg (L/K)}$.
  \end{proof}
  
  Of course, the projection
  $$\wedge^p M_\Q \twoheadrightarrow F^p(0 , \Trop ( \overline{\varphi(\Spec L)})) $$
  factors through
  $$\wedge^p M_\Q \twoheadrightarrow \wedge^p \varphi(M)_\Q \twoheadrightarrow F^p(0 , \Trop ( \overline{\varphi(\Spec L)})) .$$
  
  \begin{rem}\label{remark on different tropicalizations}
    Let $L'/L$ be an extension of fields, and 
     a commutative diagram  
  $$\xymatrix{
  & \Spec L \ar[r]^-{\varphi_1} & \G_m^r \\
  & \Spec L' \ar[r]_-{\varphi_2} \ar[u]  & \G_m^l \ar[u]_-{\psi}
  }$$
  over $K$ with
  $\psi \colon \G_m^l \to \G_m^r$ given by monic monomials.
  This diagram induces a surjective map
  $$\Trop ( \overline{\varphi_2(\Spec L')})\twoheadrightarrow \Trop ( \overline{\varphi_1(\Spec L)})$$ 
  and hence induces an injective pull-back map
  \begin{align}
  F^p(0, \Trop ( \overline{\varphi_1(\Spec L)})) \hookrightarrow F^p(0, \Trop ( \overline{\varphi_2(\Spec L')})). \label{map:Fp:pull-back}
  \end{align}
  \end{rem}
  
  \begin{dfn}\label{tropical Milnor K}
  Let $p \geq 0$ be a non-negative integer.
  We put 
  $$  K_T^p(L/K):=\lim_{\substack{\rightarrow \\ \varphi\colon \Spec L \rightarrow \G_m^r }} F^p(0, \Trop( \overline{\varphi(\Spec L)} ) ) , $$
  where the limit is indexed by 
  the  category whose objects are 
  $K$-morphisms  $\varphi \colon \Spec L \to \G_m^r$  to tori  of arbitrary dimensions 
  and morphisms are $K$-morphisms (in the opposite direction) of tori given by monic monomials compatible with morphisms from $\Spec L$.
  We call it the $p$-th \emph{tropical Milnor $K$-group}.
  When there is no confusion, we put $K_T^p(L):=K_T^p(L/K)$.
  \end{dfn}
  
  \begin{rem}
    By \cite[Proposition 3.11]{JSS19} (Remark \ref{remark stalk tropical variety}), for a valuation $v\in X^{\Ber}$ with $\Gamma_v=\{0\}$, 
    we have a natural isomorphism
    $$\F^p_{X,v}\cong K_T^p(k(\supp(v))/K),$$
    where $k(\supp(v))$ is the residue field at $\supp(v)\in X$.
  \end{rem}
  
  There is a natural surjective map 
  $\wedge^p (L^{\times})_\Q \twoheadrightarrow K_T^p(L/K).$
  Moreover, by Lemma \ref{lemFpexpressvalua}, we have the following.
  \begin{cor}\label{lemK_Tval}
  We have
  $$K_T^p(L/K) \cong  \wedge^p ( L^{\times})_\Q / J \cong  \wedge^p ( L^{\times})_\Q / J',$$
  where $J$  (resp.\ $J'$) is the $\Q$-vector subspace generated by $f \in \wedge^p ( L^{\times})_\Q$
  such that $\wedge^p v (f)=0 $ for $v \in \ZR(L/K)$
  (resp.\ for $v \in \ZR(L/K)$ with $\Gamma_v \cong \Z^p$, where $\Z^p$ is equipped with the lexicographic order).
  \end{cor}

  \begin{exam}
  \begin{itemize}
    \item For any $L /K$, we have $K^0_T (L/K)=\Q$ by definition.
    \item For any $L/K$, by Corollary  \ref{lemK_Tval}, we have $K^1_T (L/K) = (L^{\times } /(L \cap  K^{\alg})^{\times})_\Q.$
    \item For any $L/K$ and any $p \geq \trdeg(L/K) +1$, we have $K_T^p (L/K)=0$.
  \end{itemize}
  \end{exam}
  
  The wedge product induces a multiplication
  $$K_T^p (L/K) \times K_T^q (L/K) \to K_T^{p+q} (L/K).$$

  Next, we consider the ``stalk'' $ \F^p_{X,v}$ at a general valuation $v \in X^{\ad}$ (Definition \ref{stalks of F^p at higher valuations}).
  Let 
  $a_1,\dots, a_{ \rank v} \in k(\supp(v))^{\times}$ 
  be elements such that $v(a_1), \dots, v(a_{\rank v})$ form a basis of a $\Q$-vector space $\Q \cdot \Gamma_v$.
  Then we have a decomposition 
  \begin{align}
  (k(\supp(v))^{\times})_\Q =  \Q\langle a_j\rangle_{j=1}^{\rank v} \oplus \Ker (v\otimes \Q \colon (k(\supp(v))^{\times})_\Q \to \Q \cdot \Gamma_v)\label{decomp for stalk of F^p},
  \end{align}
    where $\Q\langle a_j\rangle_{j=1}^{\rank v}$ is a $\Q$-vector space with basis $a_1,\dots, a_{ \rank v} .$
  
  \begin{lem}\label{stalk of F^p}
    We have a natural isomorphism 
    $$\F^p_{X,v}\cong \bigwedge^p  (k(\supp(v))^{\times})_{\Q} /J_v, $$
    where 
    $J_v$ is the $\Q$-vector subspace generated by $f \in  \bigwedge^p  (k(\supp(v))^{\times})_{\Q}$
  such that $\wedge^p w (f)=0 $ for any specialization $w \in \ZR(k(\supp(v))/K)$ of $v$.
  
  Moreover, 
  decomposition (\ref{decomp for stalk of F^p}) and the reduction map 
  $$\Ker ((k(\supp(v))^{\times})_\Q \to \Q \cdot \Gamma_v) \to (\kappa(v)^{\times})_\Q$$
  (extended $\Q$-linearly) 
  induce an isomorphism
    $$\F^p_{X,v}\cong \bigoplus_{i=0}^p \bigwedge^i \Q\langle a_j\rangle_{j=1}^{\rank v} \otimes K^{p-i}_T (\kappa (v) /K). $$
  \end{lem}
  
  \begin{proof}
    To simplify notation, by \cite[Proposition 3.11]{JSS19} (Remark \ref{remark stalk tropical variety}), 
    we may assume that $\supp (v)$ is a unique generic point of $X$. 
    By Remark \ref{rem;val;vert;special;resi;fld;val}, the first and the second assertions are equivalent. We discuss the second one.

    Let
    $$\varphi \colon \Spec K(X) \to \G_m^r =\Spec K[M'] $$ 
    be a morphism over $K$ to a torus $\G_m^r$
    such that $\varphi(M') $ contains $a_1,\dots, a_{ \rank v}$.
    We put $\overline{\varphi(\Spec K(X))}$ the closure in $\G_m^r$.
    Let $\Lambda $ be  a fan structure of $\Trop(\overline{\varphi(\Spec K(X))})$  such that 
     the cone $\sigma_\varphi := \Trop^{\ad}_{\Lambda} (\varphi(v)) \in \Lambda$ is of dimension $=\rank v$.
    By definition, we have
    $$\F^p_{X,v} = \underrightarrow{\lim}_{\varphi} F^p(\sigma_\varphi, \Trop(\overline{\varphi(\Spec K(X))})).$$
    We put $\overline{\varphi(\Spec K(X))}^{T_{\sigma_\varphi}}$ the closure in the affine toric variety $T_{\sigma_\varphi}$ corresponding to the cone $\sigma_\varphi$. 
    Then we have a natural morphism 
    $$\Spec \O_v \to \overline{\varphi(\Spec K(X))}^{T_{\sigma_\varphi}},$$
    and we put 
    $y \in \overline{\varphi(\Spec K(X))}^{T_{\sigma_\varphi}} \cap O(\sigma_\varphi)$ 
    the image of the maximal ideal of the valuation ring $\O_v$.
    We fix $\tilde{a_i} \in M'$ with $a_i =\varphi(\tilde{a_i})$.
    Then a decomposition 
    $$M'_\Q =\Q\langle \tilde{a_i'}\rangle_{j=1}^{\rank v} \oplus (M'\cap \sigma_\varphi^{\perp})_\Q$$
    induces an isomorphism 
    $$ F^p(\sigma_\varphi, \Trop(\overline{\varphi(\Spec K(X))})) 
    \cong \bigoplus_{i=0}^p \bigwedge^i \Q\langle \tilde{a_j} \rangle_{j=1}^{\rank v} 
    \otimes F^{p-i} (0_{N'_{\sigma_\varphi,\R}}, \Trop(\overline{\varphi(\Spec K(X))}^{T_{\sigma_\varphi}})) ,$$
    where $0_{N'_{\sigma_\varphi,\R}}$ is the zero in 
    $N'_{\sigma_\varphi,\R}:= \Hom (M' \cap \sigma_{\varphi}^{\perp}, \R) $.
    Consequently, it suffices to show that 
    the natural surjective morphism 
      $$F^{p-i} (0_{N'_{\sigma_\varphi,\R}}, \Trop(\overline{\varphi(\Spec K(X))}^{T_{\sigma_\varphi}}))  \twoheadrightarrow 
    F^{p-i} (0_{N'_{\sigma_\varphi,\R}}, \Trop(\overline{\{y\}}^{O(\sigma_\varphi)}))$$ 
    induces an isomorphism
    \begin{align} 
     \underrightarrow{\lim}_{\varphi, \sigma_\varphi} F^{p-i}(0_{N'_{\sigma_\varphi,\R}}, \Trop(\overline{\varphi(\Spec K(X))}^{T_{\sigma_\varphi}}))
     & \cong \underrightarrow{\lim}_{\varphi, \sigma_\varphi} F^{p-i} (0_{N'_{\sigma_\varphi,\R}}, \Trop(\overline{\{y\}}^{O(\sigma_\varphi)})) \label{eq stalk} \\
     (&\cong K^{p-i}_T (\kappa (v) /K)), \notag
    \end{align}
    where $\overline{\{y\}}^{O(\sigma_\varphi)}$ is the closure in $O(\sigma_\varphi)$.
  
    Let $f_1,\dots,f_s \in K[M' \cap \sigma_{\varphi}^{\perp}] \subset K[M']$
    be
    such that 
    $$\overline{\{y\}}^{O(\sigma_\varphi)} =V(f_1,\dots,f_s) \cap \overline{\varphi(\Spec K(X))}^{O(\sigma_\varphi)}$$
    as sets.
    Let 
    $$ \psi :=(\varphi,(f_1,\dots,f_s)\circ \varphi ) 
    \colon \Spec K(X) \to \G_m^r \times \G_m^s .$$
    Then we have 
    $$\overline{\psi(\Spec (K(X)))}^{T_{\sigma_\varphi}\times \A^s} = 
       \overline{\varphi(\Spec K(X))}^{T_{\sigma_\varphi}}$$
     and 
    $$\overline{\psi(\Spec (K(X)))}^{T_{\sigma_\varphi}\times \A^s}  
    \cap O(\sigma_\varphi \times \R_{\geq 0}^s) 
    = \overline{\{y\}}^{O(\sigma_\varphi)}, $$
    where  $\A^s$ is endowed with the natural toric structure corresponding to a cone  $\R_{\geq 0}^s$.
    For a sufficiently fine fan structure $\Lambda'$ of  $  \Trop(\overline{\psi(\Spec (K(X)))})$ (where the closure $\overline{\psi(\Spec (K(X)))}$ is taken in $\G_m^r \times \G_m^s$) 
    and $\sigma_\psi := \Trop^{\ad}_{\Lambda'}(\psi(v))$,
      the natural morphism 
    $$
     \overline{\psi(\Spec (K(X)))}^{T_{\sigma_\psi}  } \cap O(\sigma_\psi)   
    \to
    \overline{\varphi(\Spec K(X))}^{T_{\sigma_\varphi}} \cap O(\sigma_\varphi)
     $$
    factors through
     $$
     \overline{\psi(\Spec (K(X)))}^{T_{\sigma_\varphi}\times \A^s}  
    \cap O(\sigma_\varphi \times \R_{\geq 0}^s). $$
    Hence 
  $$F^{p-i}(0_{N'_{\sigma_\varphi,\R}}, \Trop(\overline{\varphi(\Spec K(X))}^{T_{\sigma_\varphi}}))
  \to 
  F^{p-i}(0_{N'_{\sigma_\psi,\R}}, \Trop(\overline{\psi(\Spec K(X))}^{T_{\sigma_\psi}}))$$
  factors through 
  \begin{align*}
   & F^{p-i}(0_{N'_{\sigma_\varphi \times \R_{\geq 0}^s,\R}},  
   \Trop 
   (\overline{\psi(\Spec (K(X)))}^{T_{\sigma_\varphi}\times \A^s} ))  \\
  \cong &
  F^{p-i} (0_{N'_{\sigma_\varphi,\R}}, \Trop(\overline{\{y\}}^{O(\sigma_\varphi)})).
  \end{align*}
  Hence 
  (\ref{eq stalk}) is injective. (Surjectivity immediately follows.)
  \end{proof}
  
 \subsection{Tropical $K$-groups form a cycle module}\label{Subsection tropical K group cycle module}
  We shall show that tropical Milnor $K$-groups satisfy good properties,
  i.e., they define a cycle module (\cite[Definition 2.1]{Ros96}), which is a ``module'' over Milnor $K$-groups satisfying nice properties like Milnor $K$-groups.
  
  We recall definition and several maps of rational Milnor $K$-groups.
  For a field $E$, its $p$-th rational Milnor $K$-group is defined by 
  $$K_{M,\Q}^p (E) := \bigwedge^p (E^{\times})_\Q / I_M , $$
  where  $I_M$ is the $\Q$-linear subspace of $\bigwedge^p (E^{\times})_\Q$ generated by
  $$\{a_1 \wedge \dots \wedge a_p \mid a_i = 1-a_j  \text{ for some } i \neq j \}.$$
  In particular, we have $K_{M,\Q}^0(E) = \Q$ and $K_{M,\Q}^1(E) = (E^\times)_\Q$. The image of $a_1 \wedge \dots \wedge a_p$ in $K_{M,\Q}^p(E)$  is denoted by $(a_1,\dots,a_p)$.
  \begin{itemize}
  \item A  morphism $\varphi \colon F \to E$ of fields induces  a map
  $$ \varphi_* \colon K_{M,\Q}^p(F) \to K_{M,\Q}^p(E)$$
  by $$\varphi_*((a_1,\dots,a_p)) = (\varphi(a_1), \dots, \varphi(a_p)).$$
  \item For a finite morphism $\varphi \colon F \to E$,
  there is a natural map 
  $$\varphi^* \colon K_{M,\Q}^p(E) \to K_{M,\Q}^p(F)$$
  called the norm homomorphism.
  It is a generalization of
  the multiplication
  $$ \times [E:F] \colon K_{M,\Q}^0 (E) =\Q \to \Q = K_{M,\Q}^0(F)$$
  and
  the usual norm map $E^{\times} \to F^{\times}$.
  This is defined by Bass and Tate \cite{BT72} with respect to a choice of generators of $E$ over $F$, and the independence of the choice was proved by Kato \cite{Kat80}.
  \item For a normalized discrete valuation $v \colon F^{\times} \to \Z$,
  the residue homomorphism (Milnor \cite{Mil70})
  $$\partial_v \colon K_{M,\Q}^p(F) \to K_{M,\Q}^{p-1}(\kappa(v)) $$
  is  characterized by
  \begin{align*}
   \partial_v ((\pi,u_1,\dots , u_{p-1})) & = (\overline{u}_1,\dots,\overline{u}_{p-1}) \\
   \partial_v ((u_1,\dots , u_p)) & = 0 
   \end{align*}
  for a uniformizer $\pi$ of $v$ and $u_i \in F$
  with $v(u_i) =0 \ (1 \leq i \leq p)$, where $\overline{u}_i$ is the reduction.
  (Recall that $\kappa(v)$ is the residue field of $v$.)
  
  We also put 
  $$ s_{v}^{\pi} \colon 
  K_{M,\Q}^p(F) \ni 
  (u_1 , \dots, u_p)
  \mapsto 
  \partial_{v} ((\pi, u_1,\dots, u_p))  
  \in K_{M,\Q}^{p}(\kappa(v)) $$ 
  ($u_i \in F^{\times}$).
  \end{itemize}
  
  \begin{lem}\label{lemtroKfactorMilK}
    For a finitely generated extension $L/K$ of fields,
  the canonical surjective $\Q$-linear map
  $$ \bigwedge^p (L^{\times})_\Q \twoheadrightarrow K_T^p (L/K)$$
  factors through
  $$ \bigwedge^p (L^{\times})_\Q \twoheadrightarrow  K_{M,\Q}^p (L) \twoheadrightarrow K_T^p (L/K).$$
  \end{lem}
  \begin{proof}
  For any $a \in L$, there are no $2$-rational-rank valuations of $K(a)$ which are trivial on $K$.
  Hence the assertion follows from  Corollary \ref{lemK_Tval}.
  \end{proof}
  
  \begin{lem}\label{lemtrKinducedhom}
    Let $E$ and $F$ be finitely generated fields over $K$.
  \begin{itemize}
  \item For a  morphism $\varphi \colon F \to E$ of fields over $K$,
   the map
  $$ \varphi_* \colon K_{M,\Q}^p(F) \to K_{M,\Q}^p(E)$$
  induces a map 
   $$\varphi_*  \colon K_T^p(F/K) \to K_T^p(E/K)$$
   of tropical Milnor $K$-groups.
  \item For a finite morphism $\varphi \colon F \to E$ over $K$,
  the norm homomorphism
  $$\varphi^* \colon K_{M,\Q}^p(E) \to K_{M,\Q}^p(F)$$
  induces a map 
  $$\varphi^* \colon K_T^p(E/K) \to K_T^p(F/K),$$
  also called the norm homomorphism.
  \item For a normalized discrete valuation $v \colon F^{\times} \to \Z$  which is trivial on $K$,
  the residue homomorphism 
  $$\partial_v \colon K_{M,\Q}^p(F) \to K_{M,\Q}^{p-1}(\kappa(v)) $$
  induces a map
  $$\partial_v \colon K_T^p(F/K) \to K_T^{p-1}(\kappa(v)/K), $$
  also called the residue homomorphism.
  \end{itemize}
  \end{lem}
  \begin{proof}
   The assertion on $\varphi_*$ follows from Corollary \ref{lemK_Tval} and the existence of extensions of valuations for extensions of fields (\cite[Chapter 6. Section 3.4. Proposition 5]{Bou72}).
  The assertion 
  on $\partial_v$
  follows from 
    Corollary \ref{lemK_Tval} and 
  Remark \ref{rem;val;vert;special;resi;fld;val}.
  
  We shall prove the assertion on $\varphi^*$.
  By Corollary \ref{lemK_Tval}, it suffices to show that
  for  $\alpha \in K_{M,\Q}^p(E)$ whose image in $K_T^p (E/K)$ is $0$, we have
  $$\wedge^p v (\varphi^* \alpha)=0 $$
  for any $v \in \ZR(F/K)$ with $\Gamma_v \cong \Z^p$,
  where $\Z^p$ is equipped with the lexicographic order.
  We shall show this assertion by induction on $p$.
  When $p=0$, the assertion is trivial.
  We assume $p\geq 1$.
  Let $v_1 \in \ZR(F/K)$ be  the vertical generalization of $v$ of height $1$.
  Then we have $\Gamma_{v_1} \cong \Z$.
  For  an extension  $w_i \in \ZR (E/K)$ of $v_1$ to $E$,
  by the assertion on $\partial_{w_i}$, 
  the image of $\partial_{w_i} (\alpha) \in K_{M,\Q}^{p-1} (\kappa(w_i))$
  in $K_T^{p-1} (\kappa (w_i)/K)$ is $0$.
  Hence by applying the hypothesis of induction to $\varphi_i \colon \kappa(v_1) \to \kappa(w_i)$ and $\partial_{w_i} (\alpha) \in K_{M,\Q}^{p-1}(\kappa(w_i))$, 
  the image of $\varphi_i^{*} (\partial_{w_i} (\alpha)) $ in $K_T^{p-1}(\kappa(v_1)/K)$ is $0$.
  Let $ \overline{v} \in \ZR(\kappa(v_1)/K)$ be the valuation of height $\height (v) -1$ corresponding to $v$ in the sense of Remark \ref{rem;val;vert;special;resi;fld;val}.
  Then we have
  \begin{align*}
     \wedge^{p-1} \overline{v}(\partial_{v_1} \circ \varphi^* \alpha) 
             = \wedge^{p-1} \overline{v}\bigg(\sum_{w_i} \varphi_{i}^* \circ \partial_{w_i} (\alpha)\bigg)
             =0 ,
   \end{align*}
  where $w_i \in \ZR (E/K)$ runs through all extensions of $v_1 \in \ZR(F/K)$ to $E$,
  and  the first equality follows from a basic property
  $$\partial_{v_1}\circ \varphi^* = \sum_{w_i} \varphi_{i}^* \circ \partial_{w_i}$$
  of Milnor $K$-groups.
  Hence $\wedge^p v (\varphi^* \alpha) =0$.
  \end{proof}
  
  We also denote the induced maps of tropical Milnor $K$-groups by $\varphi_*,\varphi^*,\partial_v$.
  
  \begin{thm}\label{trKcymod}
  The functor
  \begin{align*}
    (\emph{finitely generated fields over } K) & \to ( \Z_{\geq 0}\emph{-graded abelian group} ) \\
    L & \mapsto \bigoplus_p K_T^p(L/K).
   \end{align*}
  with $\varphi_*,\varphi^* , \partial_v$ and 
  the natural multiplication 
  $$K_{M,\Q}^p (L) \times K_T^q (L/K) \to K_T^{p+q} (L/K)$$
  is a cycle module in the sense of  Rost \cite[Definition 2.1]{Ros96}. 
  \end{thm}
  \begin{proof}
  This easily follows from Lemma \ref{lemtroKfactorMilK}, Lemma \ref{lemtrKinducedhom}, and
  the fact that Milnor $K$-groups form a cycle module \cite[Theorem 1.4 and Remark 2.4]{Ros96}.
  \end{proof}
  
  We give an explicit resolution of the Zariski sheaf of tropical Milnor $K$-groups on 
  a smooth algebraic variety  $X$ over $K$.
  Let $X^{(i)}$ be the set of points of the scheme $X$ of codimension $i$.  
  For  $i$, a point $x \in X^{(i)}$, and $y \in X^{(i +1)} $,
  Rost defined a map 
  $$\partial_x^y \colon K_T^p(k(x)/K) \to K_T^{p-1}(k(y)/K)$$ 
  (for cycle modules)
   \cite[Section 2]{Ros96} as follows.
  When $y \notin \overline{\{x\}}$, we put $\partial_x^y=0$.
  When $ y \in \overline{\{x\}}$,
  we put
  $$ \partial_x^y :=\sum_v  \varphi_v^* \circ \partial_v\colon K_T^p(k(x)/K) \to K_T^{p-1}(k(y)/K),$$
  where $v \in \ZR(k(x)/K)$ runs through all normalized discrete valuations of $k(x)$ whose
  center in $\overline{\{x\}}$ is $y$,
  and $\varphi_v \colon k(y) \to \kappa(v)$ is the induced morphism.
  We put the Zariski sheaf of $p$-th tropical Milnor $K$-groups $\mathscr{K}_{T,X}^p$ the sheaf on $X_{Zar}$ defined by
  $$\mathscr{K}_{T,X}^p(U):= \Ker ( \bigoplus_{x\in U^{(0)}}K_T^p(k(x)/K)
  \xrightarrow{d} \bigoplus_{y\in U^{(1)}}K_T^{p-1}(k(y)/K) )$$
  for open subsets $U \subset X$,
  where $d:= (\partial_x^y)_{ x \in U^{(0)},  y \in U^{(1)}}$.
  
  \begin{cor}\label{corshftrK}
  For any $p \geq 0$, the sheaf $\mathscr{K}_{T,X}^p$ has the \emph{Gersten resolution}, i.e.,
  an exact sequence
  \begin{align*}
    0 \rightarrow \mathscr{K}_{T,X}^p 
    & \rightarrow \bigoplus_{x \in X^{(0)}} i_{x *} K_T^p(k(x)/K)
    \xrightarrow{d} \bigoplus_{x\in X^{(1)}} i_{x *}K_T^{p-1}(k(x)/K)   \\
      & 
       \xrightarrow{d} \bigoplus_{x\in X^{(2)}} i_{x *}K_T^{p-2}(k(x)/K) \xrightarrow{d}  \dots 
        \xrightarrow{d} \bigoplus_{x\in X^{(p)}} i_{x *}K_T^0(k(x)/K) \xrightarrow{d} 0 ,
   \end{align*}
  where
  $i_x \colon \Spec k(x) \rightarrow X$ are the natural morphisms,
  we identify the groups $K_T^* (k(x)/K)$ and the constant Zariski sheaf on $\Spec k(x)$ given by them,
  and $d:= (\partial_x^y)_{\{x \in X^{(i)} , \ y \in X^{(i+1)}\}}$. 
  In particular, we have
  $$H_{Zar}^p(X,\mathscr{K}_{T,X}^p) \cong CH^p (X)_\Q.$$
  \end{cor}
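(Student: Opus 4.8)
The plan is to deduce the statement from Rost's general machinery of cycle modules, using as input the fact --- recalled just above, and established in \cite[Section 5]{M20} --- that the graded family $K_T^\ast$, with its corestrictions $\varphi^\ast$, pushforwards $\varphi_\ast$, and residue homomorphisms $\partial_v$, is a cycle module in the sense of \cite[Definition 2.1]{Ros96}. For any cycle module $M_\ast$ over the base field $K$, Rost builds the cycle complex with coefficients in $M_\ast$, proves its $\A^1$-homotopy invariance, and derives the localization long exact sequence. The first step is simply to apply this with $M_\ast = K_T^\ast$ and with $p$ playing the role of the grading index.

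Next I would invoke Rost's acyclicity theorem for smooth schemes: from homotopy invariance and localization, together with the geometric presentation lemma (Quillen, Gabber, Ojanguren--Panin) applied to the semilocal ring of $X$ at a finite set of points, one obtains that the Cousin complex of $\mathscr{K}_T^p$ is exact --- its $i$-th term being the sum over $x \in X^{(i)}$ of the skyscraper-type sheaves $i_{x *} K_T^{p-i}(k(x))$, with differentials assembled from the residues $\partial_x^y$. Since $\mathscr{K}_T^p$ is by definition the kernel of the differential leaving degree $0$, this exactness is precisely the asserted Gersten resolution; moreover its terms are flasque, hence acyclic, so the complex computes $H_{\Zar}^\ast(X,\mathscr{K}_T^p)$.

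Finally, for the identification with Chow groups, I would observe that $\wedge^0$ of any group is $\Q$, so $K_T^0(k(x)) \cong \Q$, while $K_T^1(k(x))$ is the quotient of $(k(x)^\times)_\Q$ by the subspace on which every $v \in \ZR(k(x)/K)$ vanishes, and the residues $\partial_v$ on $K_T^1$ are the order-of-vanishing maps. The last two terms of the Gersten resolution then read $\bigoplus_{x \in X^{(p-1)}}(k(x)^\times)_\Q \xrightarrow{d} \bigoplus_{x \in X^{(p)}} \Q \to 0$ with $d$ the total divisor map, so $H_{\Zar}^p(X,\mathscr{K}_T^p)$ is the cokernel of the divisor map on codimension-$p$ cycles, which is $\CH^p(X)_\Q$ by the definition of rational equivalence. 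The real obstacle is all in the cycle module axioms --- in particular checking that the $\partial_v$ are well defined on $\wedge^\bullet(F^\times)_\Q$ modulo the relations cutting out $K_T^\ast$ and that they satisfy Rost's compatibility rules; granting this, which is the content of \cite[Section 5]{M20}, the corollary is a formal consequence of \cite[Section 6]{Ros96}.
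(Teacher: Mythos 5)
Your proposal is correct and reconstructs exactly the argument that the cited reference [M20, Corollary 5.9] relies on: once $K_T^\ast$ is recognized as a Rost cycle module (which the present paper recalls is established in [M20, Section 5]), the Gersten resolution is the specialization of Rost's general acyclicity theorem for smooth schemes, the terms are flasque, and the top cohomology is the cokernel of the total divisor map, i.e.\ $\CH^p(X)_\Q$. The paper itself proves nothing here beyond the citation, so there is no divergence to report; the only point worth spelling out a bit more is that $K_T^1(k(x))$ is a \emph{quotient} of $(k(x)^\times)_\Q$, but since the residue maps factor through this quotient and the projection $(k(x)^\times)_\Q \twoheadrightarrow K_T^1(k(x))$ is surjective, the image of the last differential is unchanged and the cokernel is still $\CH^p(X)_\Q$.
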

  \begin{proof}
  The first assertion follows from  Theorem \ref{trKcymod} and \cite[Theorem 6.1]{Ros96}.
  The second assertion follows from the first one. 
  \end{proof}
  
\section{Proof of the main theorem}\label{sec;main theorem}
   The aim of this section is to prove the main theorem (Theorem \ref{thm,trop,main}).
   It follows from
   (easy lemmas in Subsection \ref{subsec:tro:coh:sev:comput}
   and)
    a theorem on coniveau spectral sequences of general cohomology theories \cite[Corollary 5.1.11]{CTHK97},
   which is developed by many mathematicians including 
   Quillen \cite{Qui73},
   Bloch-Ogus \cite{BO74}, Gabber \cite{Gab94}, Rost \cite{Ros96}, and Colliot-Th\'{e}l\`{e}ne-Hoobler-Kahn \cite{CTHK97}.
   To apply this theorem to tropical cohomology, 
   we will prove 
   Proposition \ref{Keyprp etale}, Proposition \ref{Keyprp}, and Corollary \ref{corestriction}.

   Let $K$ be a trivially valued field.
   
 \subsection{Proof of the main theorem}\label{subsection proof of main theorem}
  \begin{prp}\label{Keyprp etale}
  Let $ \Phi \colon X' \rightarrow X$ be an \'{e}tale morphism of smooth algebraic varieties over $K$.
  Let $Z \subset X$ be a closed subscheme.
  We assume $Z':=\Phi^{-1}(Z) \rightarrow Z$ is an isomorphism.
  Then we have
  $$ \Phi^* \colon H_{\Trop,Z}^{p,q}(X)\cong H_{\Trop,Z'}^{p,q}(X').$$
  \end{prp}
  
  \begin{proof}
  Let $v'\in Z'^{\Ber}$, and  $v:=\Phi(v') \in Z^{\Ber}$.
  We shall show 
  the natural morphism
  $ \C_{Z \subset X,v}^{p,q}
  \to  \C_{Z' \subset X',v'}^{p,q} $
  of stalks is an isomorphism.
  We have a commutative diagram 
  $$\xymatrix{
  0 \ar[r] & \C_{Z \subset X,v}^{p,q} \ar[r] \ar[d]
  &  \C_{ X,v}^{p,q}  \ar[r] \ar[d]
  & \pi_* \pi^* \C_{ X,v}^{p,q } \ar[d]
  \\
  0 \ar[r] & \C_{Z' \subset X',v'}^{p,q} \ar[r] 
  &  \C_{ X',v'}^{p,q}  \ar[r]
  & \pi_* \pi^*  \C_{ X',v'}^{p,q } ,
  }$$ 
  whose rows are exact.
  Since an \'{e}tale morphism is open, 
  the image $\Phi (X'^{\Ber})$ contains an open neighborhood of $v \in X^{\Ber}$.
  Hence the second and the third vertical arrows are injective. 
  Hence it is enough to prove 
  $ \C_{ X,v}^{p,q}
  \to  \C_{ X',v'}^{p,q} $
  is surjective. 
  More precisely, it is enough to show that 
  for $f \in \O_{X',\supp (v')}$, 
  the tropicalization map $\Trop \circ f$ on an open neighborhood of $v'$ factors through $\Phi$ and a tropicalization map on an open neighborhood of $v$.
  
  Since $\Phi $ is etale, 
  there exist  $f' \in \O_{X',\supp (v')}^{\times}$ and $g\in \O_{X,\supp (v)}$ such that $f=f'\cdot g$.
  (Here, to simplify notation, for an element $h \in \O_{X,\supp (v)}$, let $h$ also donote the image of $h$ under $ \O_{X,\supp (v)} \hookrightarrow \O_{X',\supp(v')}$.)
  Since $Z' \cong Z$, 
  there exists $g' \in \O_{X,\supp(v)}^{\times}$ such that 
  $\overline{g'} =\overline{f'} \in k(\supp(v'))^{\times}$.
  Let 
  $$ W':=\{w'\in U'^{\Ber} \mid w'(1-\frac{g'}{f'}) >0 \}$$
  be an open neighborhood of $v'$, 
  where $U'\subset X'$ is an open subscheme 
  such that $v'\in U'^{\Ber}$, 
  functions $f,g,g'$ are in $ O(U')$,
  and $f'$ is in $\O^{\times} (U')$
  Then by ultrametric inequality, 
  for $w' \in W'$, we have
  $ w'(g') = w'(f' ) . $
  Hence
  we have 
  $$\Trop \circ (f',g)|_{W'}= \Trop \circ (g',g) \circ \Phi|_{W'} ,$$
  where $\Trop \colon \A^{2,\Ber} \to \Trop(\A^2)$.
  Hence 
  $\Trop \circ f|_{W'} $ factors through $ \Trop \circ (g',g) \circ \Phi|_{W'}$. 
  \end{proof}

  \begin{prp}\label{Keyprp}
  Let $X $ be a smooth quasi-projective variety over $K$,
  and $Z \subset  X$ a closed subscheme.
  We put $\pi \colon X \times \A^1 \to X$ the first projection.
  Then
  the pullback map
  $$ \pi^* \colon H_{\Trop,Z}^{p,q}(X ) \to H_{\Trop,Z \times \A^1}^{p,q}(X \times \A^1)$$
  is an isomorphism.
  \end{prp}
  Proof of Proposition \ref{Keyprp} will be given in Section  \ref{sec;trocoho;P^1;fiber}.
  
  \begin{prp}\label{corestriction}
  We assume that $K$ is a finite field.
  Let $L/K$ be an extension of trivially valued finite fields,
  and $X$ be a smooth irreducible algebraic variety over $K$.
    Then there is a morphism
  $$\cores \colon H_{\Trop}^{p,q}(X_L) \to  H_{\Trop}^{p,q}(X)$$
  such that 
  $\cores \circ \res = [L:K]$, 
  where 
  $ \res \colon H_{\Trop}^{p,q}(X) \to H_{\Trop}^{p,q}(X_L)$
  is the natural morphism.
  \end{prp}

   Proof of Proposition \ref{corestriction} 
   will be given in Section \ref{section finite fields}.

  We put  $\mathscr{H}^{p,q} $ the Zariski sheaf on $X$ associated to the presheaf $U \mapsto H^{p,q}_{\Trop}(U)$.
  
  \begin{thm}\label{thm,trop,main}
  Let $X$ be a smooth algebraic variety over $K$.
  Then there exist  natural isomorphisms
  \begin{align*}
  	 H_{\Trop}^{p,q} (X) & \cong H_{\Zar}^q(X,\H^{p,0}) ,   \\
     \H^{p,0} & \cong \K_{T,X}^p  .
  \end{align*}
  In particular, we have
  $$ H_{\Trop}^{p,p}(X) \cong CH(X)^p_\Q.$$
  \end{thm}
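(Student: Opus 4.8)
The plan is to run the coniveau/Gersten machinery of Colliot-Th\'{e}l\`{e}ne--Hoobler--Kahn \cite{CTHK97} for the ``cohomology theory with supports'' $(X,Z) \mapsto H^{p,q}_{\Trop,Z}(X)$ on smooth $K$-varieties, exactly along the lines of the proof of Theorem \ref{thm;trop;coho;Zariski;troK} sketched in the introduction; the two displayed isomorphisms are then the sheaf-level refinement of that argument, and the last assertion is a formal consequence. First I would check that tropical cohomology with supports satisfies the hypotheses needed in \cite{CTHK97}: functoriality and the localization long exact sequences (which follow from tropical cohomology being the cohomology of a sheaf, Subsection \ref{subsec:trop:coh:hom}), together with \'etale excision and $\A^1$-homotopy invariance. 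The latter two are precisely Proposition \ref{Keyprp}, proved in Sections \ref{sec;etale;excisition} and \ref{sec;trocoho;P^1;fiber}; only \'etale excision for smooth \emph{quasi-projective} varieties is needed, as in \cite[Section 4]{CTHK97}, and when $K$ is finite one additionally invokes Proposition \ref{existence of corestriction maps} together with \cite[Sections 4, 6]{CTHK97}.

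Granting this input, \cite[Remarks 5.1.3, Corollary 5.1.11, Lemma 5.3.1(a), Proposition 5.3.2(a)]{CTHK97} yield, for each fixed $r$, a spectral sequence
$$E_1^{p,q} = \coprod_{x \in X^{(p)}} H^{r,p+q}_{\Trop,x}(X) \ \Rightarrow\ H^{r,p+q}_{\Trop}(X), \qquad E_2^{p,q} = H^p_{\Zar}(X,\mathscr{H}^{r,q}),$$
together with a description of $\mathscr{H}^{r,0}(V)$ as $\Ker\big(d_1 \colon H^{r,0}_{\Trop,\eta}(X) \to \bigoplus_{x\in V^{(1)}} H^{r,1}_{\Trop,x}(X)\big)$ for $V \subset X$ open, $\eta$ the generic point. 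Next I would feed in two local computations. Lemma \ref{lemtrcohoxtrivial} shows that the local groups $H^{r,q}_{\Trop,x}(X)$ vanish for $q \geq 1$ (this is where trivial valuedness of $K$ enters), hence $E_1^{p,q} = 0$ for $q \geq 1$; since $E_2^{p,q} = 0$ for $q \leq -1$ by construction, the spectral sequence degenerates and gives $H^q_{\Zar}(X,\mathscr{H}^{r,0}) \cong H^{r,q}_{\Trop}(X)$, the first isomorphism. For the second, Lemma \ref{lemH^r,1trK} identifies $H^{r,0}_{\Trop,\eta}(X) \cong K_T^r(K(X))$ and $H^{r,1}_{\Trop,x}(X) \cong K_T^{r-1}(k(x))$ compatibly with $d_1$ and the residue maps $\partial^x_\eta$, so that $\mathscr{H}^{r,0}(V) \cong \Ker\big(K_T^r(K(X)) \xrightarrow{(\partial^x_\eta)_x} \bigoplus_{x\in V^{(1)}} K_T^{r-1}(k(x))\big)$; by the Gersten resolution of $\mathscr{K}_T^r$ (Corollary \ref{corshftrK}) this kernel is $\mathscr{K}_T^r(V)$, giving $\mathscr{K}_T^r \cong \mathscr{H}^{r,0}$ as sheaves. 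Combining the two isomorphisms with $H^p_{\Zar}(X,\mathscr{K}_T^p) \cong \mathrm{CH}^p(X)_\Q$ (again Corollary \ref{corshftrK}) yields $H^{p,p}_{\Trop}(X) \cong \mathrm{CH}^p(X)_\Q$.

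The genuinely hard part is external to this bookkeeping: it is Proposition \ref{Keyprp}, and within it the $\A^1$-homotopy invariance, whose proof (Section \ref{sec;trocoho;P^1;fiber}) rests on explicit descriptions of the Berkovich and adic analytifications and the tropicalizations of $\A^1$ over height-one valuation fields and on computations of tropical $K$-groups of residue fields of points of $\A^{1,\ad}$. Everything else here is formal once Lemma \ref{lemtrcohoxtrivial}, Lemma \ref{lemH^r,1trK}, and Corollary \ref{corshftrK} are available; the only non-tautological local input at this stage is the identification of the local tropical cohomology groups on the $E_1$-page with tropical $K$-groups of residue fields, i.e.\ Lemma \ref{lemH^r,1trK}.
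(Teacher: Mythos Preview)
Your proposal is correct and follows essentially the same route as the paper's own proof: apply the Bloch--Ogus/Gersten machinery of \cite{CTHK97} to tropical cohomology with supports using Proposition \ref{Keyprp} (and Proposition \ref{existence of corestriction maps} over finite fields), collapse the coniveau spectral sequence via Lemma \ref{lemtrcohoxtrivial}, and identify $\mathscr{H}^{r,0}$ with $\mathscr{K}_T^r$ via Lemma \ref{lemH^r,1trK} and the Gersten resolution (Corollary \ref{corshftrK}). The references, the order of the steps, and the diagnosis of where the real work lies all match the paper.
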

  \begin{proof}
  The last assertion follows from Corollary \ref{corshftrK}.
  
  For each $r$, by Proposition \ref{Keyprp etale}, 
  Proposition \ref{Keyprp},
  Proposition \ref{corestriction}, and 
  \cite[Remarks 5.1.3, Corollary 5.1.11, and Proposition 5.3.2 (a)]{CTHK97},
  there exists a spectral sequence
  $$ E_1^{p,q}=\coprod_{x \in X^{(p)}} H_{\Trop,x}^{r,p+q}(X) \Rightarrow H_{\Trop}^{r,p+q}(X)$$
  whose $E_2$-terms are
  $E_2^{p,q}\cong H_{\Zar}^p(X,\mathscr{H}^{r,q})$,
  and we have
  $$ \mathscr{H}^{r,0}(V)
  \cong \Ker \big( d_1 \colon 
  \bigoplus_{x \in V^{(0)}}  H^{r,0}_{\Trop,x} (X) 
  \rightarrow 
  \bigoplus_{x \in V^{(1)}}  H^{r,1}_{\Trop, x}(X) \big) $$ 
  for open subvariety $V \subset X$,
  where
  \begin{itemize}
  \item $H_{\Trop,x}^{r,p+q}(X):= \underrightarrow{\lim}_{x\in U} H_{\Trop,\overline{\{x\}}\cap U}^{r,p+q}(U)$, where $U \subset X$ runs through all open neighborhoods of $x$,
  \item $V^{(i)} \subset V$ is the subset of points of codimension $i$, and
  \item $d_1$ is the differential map of the spectral sequence.
  \end{itemize}
  By Lemma \ref{lemtrcohoxtrivial}, we have
  $E_1^{p,q}=0$ for $q\geq 1$.
  We have $E_2^{p,q}\cong H_{\Zar}^p(X,\mathscr{H}^{r,q})=0$ for $q \leq -1$.
  Hence 
  $$H_{\Zar}^p(X,\H^{r,0}) \cong E_2^{p,0}=E_{\infty}^{p} \cong H_{\Trop}^{r,p} (X).$$
  By Lemma  \ref{lemH^r,1trK} and Corollary \ref{corshftrK}, we also have
  \begin{align*}
  	 \mathscr{H}^{r,0}(V) &
  \cong \Ker \big(
  \bigoplus_{x \in V^{(0)}}  H^{r,0}_{\Trop,x} (X) 
   \xrightarrow{d_1} \bigoplus_{x \in V^{(1)}}  H^{r,1}_{\Trop, x}(X) \big)\\
  				 &  \cong \Ker \big(
             \bigoplus_{x \in V^{(0)}}   K_T^{r} (k(x) /K)
            \xrightarrow{d=(\partial_{\eta}^x)_x} \bigoplus_{x \in V^{(1)}}  K_T^{r-1} (k(x) /K) \big)  \\
  				 & = \mathscr{K}_{T,X}^r(V).
   \end{align*}
  \end{proof}
  
  \begin{rem}
  	By construction, the isomorphism $H_{\Trop}^{p,p}(X) \cong CH(X)^p_\Q$ coincides with Liu's tropical cycle class map (\cite[Definition 3.8]{Liu20}), 
  	which uses the Zariski sheaf of Milnor $K$-groups $\K_M^p$ and Bloch's formula.
  \end{rem}
   
 \subsection{Easy lemmas}\label{subsec;trop;main;thm;basic;lem}
  \label{subsec:tro:coh:sev:comput}
  In this subsection, we shall show Lemma \ref{lemtrcohoxtrivial} and \ref{lemH^r,1trK},
  which are used to prove Theorem \ref{thm,trop,main}.
  Let $X$ be a smooth algebraic variety over $K$.
  Let $M$ be a free $\Z$-module of rank $n$ and $N:= \Hom (M,\Z)$.
  
  \begin{lem}\label{lem:trop:cone:dim:exchang}
  	We assume that $X$ is affine.
  Let $\varphi \colon X \to T_{\sigma}$ be a closed immersion to the affine toric variety $T_{\sigma}$ corresponding to a cone $\sigma \subset N_\R$.
  Let $p \geq 0$, and  $x \in X$ a point of codimension $p$.
  We assume that
  $X \cap \varphi^{-1}(O(\sigma)) = \overline{\{x\}}$ 
  and $X\cap \varphi^{-1}(\G_m^n)\neq \emptyset $, 
  where $\G_m^n=\Spec K[M]$.
  Let $p_x \subset \O(X)$ the prime ideal corresponding to $x$.
  
  Then
  there exist $f_1, \dots ,f_r \in \O(X) \setminus p_x$
  such that
  \begin{enumerate}
  \item $$\Trop((\varphi,(f_i)_i)(X)) \cap (\sigma \times \{0\}^r)
  \subset \Trop(T_{\sigma}) \times (\R \cup \{\infty\})^r $$ 
  is a finite union of cones of dimension $\leq p$,
  and \label{enu:1:lem:trop;cone:dim:exchang}
  \item $$\Trop((\varphi,(f_i)_i)(X)) \cap( \sigma \times \{0\}^r \setminus \relint (\sigma \times \{0\}^r))
  \subset \Trop(T_{\sigma}) \times (\R \cup \{\infty\})^r $$ 
  	is a finite union of cones of dimension $\leq p-1$, \label{enu:2;lem:trop:cone:dim:exchang}
  \end{enumerate}
  where 
  $(\varphi ,(f_i)_i) \colon X \to T_{\sigma} \times \A^r$
  is a closed immersion, and 
  $\A^r$ is equipped with a natural toric structure.
  \end{lem}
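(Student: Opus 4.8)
The plan is to reduce everything to a statement about the tropicalization of a single affine variety with a chosen closed point cut out (scheme-theoretically up to the boundary orbit) by the generic point, and then to produce the auxiliary functions $f_i$ by a Noether-normalization / prime-avoidance argument. First I would observe that the hypothesis $X \cap \varphi^{-1}(O(\sigma)) = \overline{\{x\}}$ says the torus-orbit part of $X$ is irreducible of codimension $p$, so $\dim \overline{\{x\}} = \dim X - p$; the intersection $\Trop((\varphi,(f_i)_i)(X)) \cap (\sigma \times \{0\}^r)$ should be identified, via Tevelev's theory (Remark \ref{rem;trop;cpt;property}, Lemma \ref{rem;intersection;subcone;trop'n}, and Lemma \ref{lem;trop;cpt;refinement}), with (the tropicalization of) the closure of $\overline{\{x\}}$ in a suitable partial compactification, together with the extra coordinates $f_i$ vanishing — i.e. with the tropicalization of $\overline{\{x\}}$ under the map $(\varphi,(f_i)_i)$. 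The point of the $f_i$ is to force that tropicalization to have the \emph{expected} dimension $p = \codim(\overline{\{x\}})$, rather than being a higher-dimensional set.

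The key steps, in order: (1) Set $Z := \overline{\{x\}} \subset X$, a closed subvariety of codimension $p$; the functions $f_i \in \O(X)\setminus p_x$ restrict to functions on $X$ that are units on a neighborhood of the generic point of $Z$ but whose common zero locus is exactly $Z$ — choose $f_1,\dots,f_r$ generating the ideal of $Z$ in some affine chart, after shrinking so that none lies in $p_x$; concretely one may first localize so that $Z$ is cut out by $r$ equations, then clear denominators. (2) Identify $\Trop((\varphi,(f_i)_i)(X)) \cap (\sigma \times \{0\}^r)$ with the tropicalization of $Z$ embedded via $(\varphi|_Z, (f_i|_Z)_i)$: a point of $X^{\Ber}$ mapping into $\sigma \times \{0\}^r$ is a valuation $v$ whose center on $\overline X$ (in the toric variety attached to a fan structure of $\Trop(\varphi(X))$ refining so that $\sigma$ is a face) lies in the stratum $O(\sigma)$ and with $v(f_i) = 0$; by the hypothesis its support specializes to the generic point of $Z$, so $v$ lives on $Z$. (3) By the Bieri–Groves theorem (\cite[Theorem A]{BG84}, cited in Subsection \ref{subsec;trop;Ber}) applied to $Z$, $\Trop((\varphi|_Z,(f_i|_Z)_i)(Z))$ is a finite union of rational polyhedral cones of dimension $\dim Z$; but $\dim Z$ is \emph{not} $\codim(Z) = p$ in general — so here I would instead argue that it is the intersection with the subspace where all $f_i$-coordinates vanish of $\Trop$ of a well-chosen compactification, and the claim (\ref{enu:1:lem:trop;cone:dim:exchang}) is really the statement that after adding enough $f_i$ this intersection drops to dimension $\le p$. (4) For (\ref{enu:2;lem:trop:cone:dim:exchang}), observe that the boundary $\sigma \times \{0\}^r \setminus \relint(\sigma\times\{0\}^r)$ meets $\Trop$ in the union over proper faces $\tau \prec \sigma$ of the corresponding strata, each of which by Lemma \ref{rem;intersection;subcone;trop'n} corresponds to a subvariety of codimension $\ge 1$ inside $Z$ (or to the analogous statement with one of the $f_i$-coordinates becoming positive, again cutting down the dimension), hence has tropicalization of dimension $\le p-1$.

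The main obstacle I anticipate is making step (3) precise: one must choose the $f_i$ so that the \emph{combined} tropicalization has dimension $p$ and not merely $\dim Z$. The mechanism is that the coordinates $f_i$, being a system of equations for $Z$ in $X$ near $x$, force the relevant piece of $\Trop$ to sit inside a linear subspace of the appropriate codimension; but controlling this requires care, because tropicalization does not in general commute with taking the common zero locus of functions. I would handle this by working on a tropical compactification of $X$ adapted to $Z$ — i.e., refining the fan so that the closure $\overline Z$ of $Z$ meets the toric strata properly (possible by \cite[Proposition 2.5]{Tev07} plus a further refinement, as in the Remark after Lemma \ref{lem;trop;cpt;refinement}) — and then reading off the dimension bounds from the codimension statement in Lemma \ref{rem;intersection;subcone;trop'n}. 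A secondary technical point is ensuring $(\varphi,(f_i)_i)$ is a closed immersion, which holds automatically once $\varphi$ is (adding coordinates to a closed immersion keeps it one), so that is not a real difficulty. Once the adapted compactification is in place, both (\ref{enu:1:lem:trop;cone:dim:exchang}) and (\ref{enu:2;lem:trop:cone:dim:exchang}) follow by the dimension count on strata.
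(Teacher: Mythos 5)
There is a genuine gap, and it starts with an internal contradiction in your step (1): you propose to take $f_1,\dots,f_r$ ``generating the ideal of $Z$'' with common zero locus exactly $Z=\overline{\{x\}}$, but the ideal of $Z$ is $p_x$ itself, so any such $f_i$ lies in $p_x$, whereas the statement requires $f_i\in\O(X)\setminus p_x$. The confusion propagates to your reading of the slice $\sigma\times\{0\}^r$: value $0$ in the new coordinates means the valuations $v$ in question satisfy $v(f_i)=0$, i.e.\ the $f_i$ are \emph{units} for $v$, not that they vanish; and such $v$ typically have support the generic point of $X$ (only their \emph{centers} lie over the stratum), so the slice is not $\Trop$ of $Z$ embedded by $(\varphi|_Z,(f_i|_Z))$ as claimed in your step (2). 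Consequently your proposed mechanism for the dimension drop (``the $f_i$, being equations for $Z$, force the relevant piece of $\Trop$ into a linear subspace of the right codimension'') does not exist, as you half-acknowledge in step (3); the proposal never supplies a criterion for choosing the $f_i$ that actually removes the high-dimensional cones from the slice.

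The correct selection principle is in a sense the opposite of yours. Take a fan structure $\Lambda$ of $\Trop(\varphi(X))\cap\sigma$, refine $\sigma$ to a fan $\Sigma$ containing $\Lambda$ as a subfan, and let $X'$ be the closure of the torus part of $\varphi(X)$ in $T_{\Sigma}$; any valuation with tropicalization in $\relint(\tau)$, $\tau\in\Lambda$, has its center in $X'\cap O(\tau)$, which is of pure codimension $\dim(\tau)$ by Lemma \ref{rem;intersection;subcone;trop'n} (this part you did anticipate). Now let $B\subset X$ be the finite set of images of the generic points of the components of $X'\cap O(\tau)$ over all $\tau$ with $\dim(\tau)\geq p$; each $y\in B$ has codimension $\geq p$, so no $y\in B\setminus\{x\}$ is a generization of $x$, and one chooses $f_i\in\O(X)\setminus p_x$ so that every $y\in B\setminus\{x\}$ lies in $V(f_i)$ for some $i$. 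Then a valuation in the slice ($v(f_i)=0$ for all $i$) cannot have its center in any such bad component, which empties the slice over all cones of dimension $>p$ (giving (\ref{enu:1:lem:trop;cone:dim:exchang})), and over proper faces of $\sigma$ even over cones of dimension $\geq p$, since strata over proper faces map into orbits disjoint from $O(\sigma)$ and hence cannot dominate $\overline{\{x\}}$ (giving (\ref{enu:2;lem:trop:cone:dim:exchang})). So the $f_i$ must vanish at the finitely many bad stratum images while being invertible at $x$ — not cut out $Z$.
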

  \begin{proof}
  Let $\Lambda $ be a fan structure of $\Trop(\varphi(X)) \cap \sigma$.
  We put $X'$ the closure of $\varphi(X) \cap \G_m^n$ in $T_{\Lambda}$.
  We put $B \subset X $ the union of the image of the generic points of the irreducible components of $X' \cap O(\tau)$ for  cones $\tau \in \Lambda$ of dimension $\geq p$ under the natural morphism $X' \to X$.
  Then  any $y \in B $ is of codimension $\geq p$ in $X$,
  in particular,  every $y \in B \setminus \{x\}$ is not a generalization of $x$.
  Hence there exist $f_1,\dots,f_r \in \O(X) \setminus p_x$
  such that for any $y \in B \setminus \{x\}$, we have $f_i\in p_y$ for some $1\leq i\leq r$, where $p_y$ is the prime ideal corresponding to $y$.
  
  Let $\Xi$ be a fan structure of 
  $$\Trop((\varphi,(f_i)_i)(X) \cap \G_m^{n+r}) \cap  (\sigma \times \{0\}^r)$$
  such that for any $\xi \in \Xi$, there exists a cone $\tau_\xi\in \Lambda$ with $\relint \pr(\xi)\subset \relint \tau_\xi$, where $\pr \colon N_\R \times \R^r \to N_\R$ is the projection.
  Let $\xi \in \Xi$ be a cone of dimension $\geq p$. 
  Suppose that $\dim \xi \geq p+1$ or $\xi$ does not intersect with $\relint ( \sigma \times \{0\}^r)$.
  Then since $X' \cap O(\tau_\xi)\subset X'$ is of codimension $\dim \tau_\xi (\geq \dim \xi )$ and $x\in  \varphi^{-1}(O(\sigma))$, the generic point of any irreducible component of $X' \cap O(\tau_\xi)$  
  does not map to $x$, i.e., maps to $B\setminus \{x\}$.
  Hence the product $f_1 \cdot \cdot \cdot f_r$ is $0$ on $X' \cap O(\tau_\xi)$.
  Hence $f_1 \cdot \cdot \cdot f_r$ is  $0$ on $X'' \cap O(\xi)$,
  where $X''$ is the closure of $((\varphi,(f_i)_i)(X) \cap \G_m^{n+r})$ in $T_{\Xi}$.
  This contradicts to $\xi \subset \sigma \times \{0\}^r$.
  Hence there are no such $\xi$.
  \end{proof}
  
  \begin{lem}\label{lemtrcohoxtrivial}
  Let $p \geq 0$,  $x \in X^{(p)}$, and $q \geq 1$.
  Then $$H_{\Trop,x}^{r,p+q}(X) =0.$$
  \end{lem}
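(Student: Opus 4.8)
The plan is to reduce the vanishing to a local statement on a tropicalization and then apply Lemma~\ref{lem:trop:cone:dim:exchang}. By definition
$$H^{r,p+q}_{\Trop,x}(X)=\underrightarrow{\lim}_{x\in U}H^{r,p+q}_{\Trop,\overline{\{x\}}\cap U}(U),$$
the colimit being taken over affine open neighbourhoods $U$ of $x$, so it suffices to show that $H^{r,s}_{\Trop,\overline{\{x\}}\cap U}(U)=0$ for every $s>p$ once $U$ is small enough. Since $X$ is smooth and $x$ has codimension $p$, after shrinking $U$ the subvariety $Z:=\overline{\{x\}}\cap U$ is cut out by a regular sequence, and I would fix a closed immersion $\varphi_{0}\colon U\hookrightarrow T_{\sigma_{0}}$ into an affine toric variety with $\varphi_{0}^{-1}(O(\sigma_{0}))\cap U=Z$ (take the regular sequence, together with functions invertible along $Z$, as toric coordinates). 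By Corollary~\ref{cor;X;circ;trocoho;T;Z}, $H^{r,s}_{\Trop,Z}(U)$ is the colimit over closed immersions $\varphi$ of $U$ into toric varieties of $H^{r,s}_{\Trop,\Trop(\varphi(Z^{\circ}))}(\Trop(\varphi(U^{\circ})))$; shrinking $U$ further for each $\varphi$, I would restrict to a cofinal family of immersions $\psi=(\varphi_{0},f_{1},\dots,f_{r})$ with $f_{i}\in\O(U)\setminus p_{x}$ to which Lemma~\ref{lem:trop:cone:dim:exchang} applies, so that $\Trop(\psi(U))$ meets the cone $\sigma\times\{0\}^{r}$ in a union of cones of dimension $\le p$ and meets its boundary in a union of cones of dimension $\le p-1$.

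It then remains to compute $H^{r,s}_{\Trop,\Trop(\psi(Z^{\circ}))}(\Trop(\psi(U^{\circ})))$ in the colimit over $U$. Since $K$ is trivially valued, $\Trop(\psi(Z^{\circ}))$ is a fan and retracts onto its vertex, so its tropical cohomology reduces to a stalk; moreover, by the torus-bundle description of initial degenerations (Lemma~\ref{lem;trop;cpt;refinement} and \cite[Remark 12.7]{Gub13}), in the colimit over shrinking $U$ the space $\Trop(\psi(U^{\circ}))$ looks, near $\Trop(\psi(Z^{\circ}))$, like a product of this fan with a cone over a link $L$, where conclusion~(2) of Lemma~\ref{lem:trop:cone:dim:exchang} forces $\dim L\le p-1$. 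Cohomology of a cone with support at the vertex is the reduced cohomology of the link shifted by one, hence vanishes above degree $p$; combining this with the stalk contribution from the fan via a deformation-retraction argument in the spirit of Lemma~\ref{lemretractcptcone} — now carried out relative to the closed set $\Trop(\psi(Z^{\circ}))$, pushing the retraction cone by cone and using the dimension bound to cross codimension-one faces — one gets $H^{r,s}_{\Trop,\Trop(\psi(Z^{\circ}))}(\Trop(\psi(U^{\circ})))=0$ for all $s>p$. Passing to the colimit yields $H^{r,p+q}_{\Trop,x}(X)=0$ for $q\ge1$.

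The main obstacle is the last step: making precise, in the colimit over shrinking $U$, the local product structure of $\Trop(\psi(U^{\circ}))$ along the tropicalization of $Z$, reading off $\dim L\le p-1$ from Lemma~\ref{lem:trop:cone:dim:exchang}, and performing the relative deformation retraction compatibly with the polyhedral structure so that it actually computes the cohomology with supports. The accompanying bookkeeping — arranging simultaneously $\varphi_{0}^{-1}(O(\sigma_{0}))\cap U=Z$, domination of an arbitrarily given embedding, and the hypotheses of Lemma~\ref{lem:trop:cone:dim:exchang}, by shrinking $U$ — is routine but needs to be checked with some care.
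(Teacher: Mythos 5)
Your argument matches the paper's one-line proof ingredient-for-ingredient (Corollary~\ref{cor;X;circ;trocoho;T}, Lemma~\ref{lem:trop:cone:dim:exchang}, a cell-complex computation \`a la \cite[Proposition 2.2]{MZ13}, and the long exact sequence of the pair), and your cone-over-link bookkeeping is a reasonable way to flesh out the cell-complex step the paper only cites. One small remark: the bound $\dim L\le p-1$ is better traced to conclusion~(1) of Lemma~\ref{lem:trop:cone:dim:exchang}, which caps the local dimension of $\Trop(\psi(U^{\circ}))$ near $\Trop(\psi(Z^{\circ}))$ at $p$ (so that $\dim L\le p-\dim\Trop(\psi(Z^{\circ}))-1\le p-1$), whereas conclusion~(2) serves to keep the top-dimensional cones away from the faces of $\sigma\times\{0\}^r$ so that the relative retraction can be carried out compatibly with the polyhedral structure; otherwise the approach is essentially the same as the paper's.
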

  \begin{proof}
  	By Remark \ref{rem;X;circ;cpt}, Corollary \ref{cor;X;circ;trocoho;T;Z} and Lemma \ref{lem:trop:cone:dim:exchang},
   each element of $H_{\Trop,x}^{r,p+q}(X)$ is given by a cocycle of 
   $$\Trop((\varphi,(f_i)_i)(X)) \cap (\overline{\sigma} \times \{0\}^r)$$ 
   as in Lemma \ref{lem:trop:cone:dim:exchang} 
   for some $\varphi$ defined on an affine neighborhood of $x$,
   where $\overline{\sigma}$ is the closure in $\Trop (T_{\sigma})$.
  	Then since 
    $$\Trop((\varphi,(f_i)_i)(X)) \cap (\overline{\sigma} \times \{0\}^r) 
    = \overline{\Trop((\varphi,(f_i)_i)(X)) \cap (\sigma \times \{0\}^r)},$$
    the assertion follows from the long exact sequences of relative tropical cohomology.
  \end{proof}
  
  \begin{lem}\label{lemH^r,1trK}
  There are natural isomorphisms
  $$ H^{r,0}_{\Trop,x} (X) \cong K_T^r (k(x) /K) \quad (x \in X^{(0)})$$
  and
  $$H^{r,1}_{\Trop,y}(X) \cong K_T^{r-1}(k(y) /K )  \quad (y \in X^{(1)})$$
  such that
  $$\xymatrix{
  	H^{r,0}_{\Trop,x}(X)  \ar[d] \ar[r]^-{d_1} &  H^{r,1}_{\Trop, y}(X) \ar[d] \\
  	K_T^r ( k(x)/K ) \ar[r]^-{\partial_x^y}  & K_T^{r-1} (k(y)/K)
  }$$ 
  is commutative for $y \in \overline{ \{x\} }$.
  \end{lem}
  \begin{proof}
  The first isomorphism is given by Corollary  \ref{cor;X;circ;trocoho;T;Z}.
  The second one is given as follows. (By construction, the diagram is commutative.)
  We fix $y \in X^{(1)}$ and $x \in X^{(0)}$ such that $y \in \overline{ \{x\} }$.
  By Corollary \ref{cor;X;circ;trocoho;T;Z} and Lemma \ref{lem:trop:cone:dim:exchang},
  the tropical cohomology $H^{r,1}_{\Trop,y}(X)$ is isomorphic to
  $$\underrightarrow{\lim}_{\varphi}
  H^{r,1}_{\Trop,\overline{l_\varphi} \setminus l_\varphi} (\overline{l_\varphi}, \Lambda_{\varphi}),$$ 
  where $\varphi \colon U_\varphi \to T_{l_\varphi}$
  run through
  all closed immersions
  from open neighorhoods $U_\varphi \subset X$ of $y$
  to the affine toric varieties $T_{l_{\varphi}}$ corresponding to  $1$-dimensional cones $l_\varphi $
  such that
  $\varphi^{-1}(O(l_{\varphi})) = U_\varphi \cap \overline{\{y\}}$,
  the closure $\overline{l_\varphi} $ is taken in $\Trop(T_{l_\varphi})$, 
  and $\Lambda_\varphi $ is a fan structure of $\Trop(\varphi(U_\varphi))$.
  (Note that for $a \in \relint l_\varphi$, the Shilov boundary of $(\Trop \circ \varphi)^{-1}(a)$ is a (non-normalized) discrete valuation of $k(x)$ corresponding to $y$.)
  By Remark \ref{rem;val;vert;special;resi;fld;val},
  a map
  $$\underrightarrow{\lim}_{\varphi}
  H^{r,1}_{\Trop,\overline{l_\varphi} \setminus l_\varphi} (\overline{l_\varphi}, \Lambda_{\varphi})
  \to K_T^{r-1}(k(x))$$ 
  given by $$H^{r,1}_{\Trop,\overline{l_\varphi} \setminus l_\varphi} (\overline{l_\varphi}, \Lambda_{\varphi})
  \ni \alpha_\varphi \mapsto (-1)^r (\alpha_\varphi)_{\gamma_\varphi}( d_\varphi \wedge \cdot) $$ 
  is the required isomorphism,
  where $d_\varphi \in l_\varphi$ is the primitive element,
  and $\gamma_\varphi \colon [0,1] \to \overline{l_\varphi}$ is a fixed homeomorphism
  such that $\gamma_\varphi(0)$ is $0 \in l_\varphi $.
  \end{proof}
   
 \subsection{Example}\label{subsec example}
  In this subsection, we shall give examples of smooth algebraic varieties 
  over $\mathbb{C}$ with the trivial valuation 
  for which 
  tropical cohomology is isomorphic to singular cohomology.
  
  Let $X$ be a smooth algebraic variety over  $\mathbb{C}$.
  By \cite[Proposition 5.5]{M21},
  there exists a natural morphism 
  $$\K_T^p \ni f_1 \wedge \dots \wedge f_p 
  \mapsto  \frac{ - d (\log f_1)  }{2 \pi i  }  \wedge \dots \wedge \frac{ - d  (\log f_p) }{2 \pi i  }  
  \in  \Omega_X^p ,$$ 
  where $\Omega_X^p$ is the Zariski  sheaf of algebraic $p$-forms.
  It induces a morphism 
  \begin{align}
  H_{\Trop}^{p,q}(X) \to H_{\sing}^{p+q} (X(\CC),\CC) 
  \label{tro coh to sing coh CC}
   \end{align}
  which is compatible with 
  the cycle class map $\CH^p (X) \to  H_{\sing}^{2p} (X(\CC),\CC)$ and the natural isomorphism 
  $\CH^p(X) \otimes \Q \cong H_{\Trop}^{p,p}(X)$ 
  (\cite[Remark 5.6]{M21}).
  
  The morphism (\ref{tro coh to sing coh CC}) induces 
  an isomorphism
  \begin{align}
  H_{\Trop}^{p,q}(X) \cong H_{\sing}^{p+q} (X(\CC),\Q)
   \label{tro coh = sing coh}
   \end{align}
  in the following cases.
  
  \begin{enumerate}
  	\item When $X \cong \A^n$ ($n\geq 0$), the isomorphism (\ref{tro coh = sing coh}) follows from $\A^1$-homotopy invariance and compatibility of  (\ref{tro coh to sing coh CC}) with the morphisms from $\CH^p(X)\otimes \Q$.
  	\item When $X $ is $\G_m^n$ ($n\geq 1$) or the complements of hyperplane arrangements of $\A^n$, the isomorphism (\ref{tro coh = sing coh}) follows from the case of affine spaces, induction, and long exact sequences of (tropical and singular) cohomology for pairs.
  	\item When there is a simple normal crossing divisor $D=\bigcup_{i \in I} D_i \subset X$ 
  	such that 
      the isomorphism (\ref{tro coh = sing coh}) 
  	holds for each strata 
  	$$\bigcap_{j \in J} D_j \setminus (\bigcup_{i \in I \setminus J} D_i )$$
  	($J \subset I$), 
      the isomorphism (\ref{tro coh = sing coh}) 
  	holds for $X$ by induction and long exact sequences of (tropical and singular) cohomology for pairs.
  \end{enumerate}
  	Examples of (3) contains  smooth toric varieties and the wonderful compactifications of the complements of  hyperplane arrangements in the sense of \cite{CP95}.
   
\section{Analytifications and tropicalizations of the affine line}\label{sec;projective;line}
   Let $(L,v_L \colon L^{\times} \to \R)$ be a complete valuation field of height $1$.
   In this section, we recall Berkovich's and Huber's analytifications of the affine line $\A^1_{L}=\Spec L [T]$ 
   (Subsection \ref{subsec:struc:P1})
   and their  tropicalizations (Subsection \ref{subsec;trop;p1}) in details.
   In Subsection \ref{subsec;A1;fiber},
   we recall some facts on tropicalizations of $\A^1$-fibers.
   
   Let $v_L$ also denote the extension to an algebraic closure $L^{\alg}$ and its extension to the $v_L$-adic completion $\hat{L^{\alg}}$ of $L^{\alg}$. 
   We denote the residue field of $(L,v_L)$ by $\kappa(L)$.
   
 \subsection{Analytifications of the affine line}\label{subsec:struc:P1}
  We describe the structure of the analytifications of the affine line.
  See \cite[1.4.4]{Ber90}, \cite[Example 2.20]{Sch11}, \cite{Hub93}, \cite{Hub94}.
  
  As a set,  Huber's analytification $\A^{1,\ad}_{L}$ is the set of equivalence classes of continuous valuations of $5$ types.
  There is a canonical inclusion $\A^{1,\Ber}_{L} \subset \A^{1,\ad}_{L}$ as sets.
  The Berkovich analytic space $\A^{1,\Ber}_{L}$ is identified with the set of valuations of heights $1$, and is also identified with the set of points of type $1-4$.
  (Points of type $5$ are of height $2$.)

  \begin{dfn}\label{dfn:P1:type:pt}
  \begin{itemize}
  \item $x \in \A_{L}^{1,\ad}$ is said to be of \emph{type $1$}
  when there exists $a \in \hat{L^{\alg}}$ 
  such that $x$ is defined by the composition
  $$L[T] \xrightarrow{T = a} \hat{L^{\alg}}  \xrightarrow{v_L} v_L(\hat{L^{\alg}}).$$
  In this case, we say that $x$ corresponds to $a$.
  
  \item $x \in \A^{1,\ad}_{L}$ is said to be of \emph{type $2$} (resp.\ of \emph{type $3$}) when
  there exists $a \in \hat{L^{\alg}}$
  and $r \in v_L((\hat{L^{\alg}})^{\times})$ (resp.\ $r \in \R \setminus v_L((\hat{L^{\alg}})^{\times})$)
  such that  $x$ is defined by the restriction of
  $$  \hat{L^{\alg}}[T] \ni \sum_i a_i (T-a)^{i} \mapsto \min_i \{v_L(a_i) + ir\}$$
  ($a_i \in \hat{L^{\alg}}$).
  In this case, we say that $x$ \emph{corresponds} to $(a,r)$.
  
  \item $x \in \A^{1,\ad}_{L}$ is said to be of \emph{type $5$} when
  there exists $a \in \hat{L^{\alg}}$, a real number $r \in v_L((\hat{L^{\alg}})^{\times})$,  and $\epsilon \in \{ 1 , - 1\}$
  such that  $x$ is defined by the restriction of
  $$  \hat{L^{\alg}}[T]  \ni \sum_i a_i (T-a)^{i} \mapsto \min_i \{ (v_L(a_i) +ir, \epsilon i ) \in \R \times \Z\}$$
  ($a_i \in \hat{L^{\alg}}$),
  where  $\R \times \Z$ is equipped with the rexicographic order.
  In this case, we say that $x$ \emph{corresponds} to $(a,r,\epsilon)$.
  \end{itemize}
  \end{dfn}

  We do not recall points of type $4$, which are not important in this paper.

  Only a point $u$ of type $5$ has a (unique) non-trivial generalization $w$ in $\A^{1,\ad}_L$ in the topological sense. When $u$ corresponds to $(a,r,\epsilon)$ ($a \in \hat{L^{\alg}}$,
   $r \in v_L((\hat{L^{\alg}})^{\times})$,  $\epsilon \in \{ 1 , - 1\}$),  the valuation $w$ is the point of type $2$ corresponding to $(a,r)$.

  \begin{rem}\label{same valuation pair}
  \begin{itemize}
  \item For a point $x$ of type $2$ or $3$ corresponding to $(a,r)$,
  the number $r$ is unique, but $a$ is not unique.
  In fact, for $a' \in \hat{L^{\alg}}$ with $v_L(a-a') \geq r$, the valuation $x$ also corresponds to $(a',r)$.
  \item Let $u$ be a point of type $5$ corresponding to $(a,r,\epsilon)$.
  When $\epsilon = 1$, for $a' \in \hat{L^{\alg}}$ with $v_L(a-a') > r$,
  the valuation $u$ also corresponds to $(a',r,1)$.
  When $\epsilon = -1$, for $a' \in \hat{L^{\alg}}$ with $v_L(a-a') \geq r$,
  the valuation $u$ also corresponds to $(a',r,-1)$.
  \end{itemize}	
  \end{rem}
  
  \begin{rem}
  Let $x \in \A^{1,\Ber}_{L} $ be a point  of type $1$, $2$, or $3$ corresponding to $a \in \hat{L^{\alg}}$ or $(a,r)$ ($r\in \R$).
  We put $(x,\infty) \subset \A^{1,\ad}_{L}$
  the set of valuations
  corresponding to $(a,r')$ ($ r' <r$)
  and $(a,r,\epsilon)$ ($ (r',\epsilon) < (r,0)$), 
  where we put $r:=\infty$ and $(r,0):= \infty$ when $x$ is of type $1$.
  The map
  $$ (x,\infty) \cap \A_L^{1,\Ber} \ni (\emph{the valuation corresponding to }(a,r')) \mapsto r' \in \R_{<r}$$
  is  a homeomorphism.
  We put $[x,\infty):= \{x\} \cup (x,\infty)$.
  \end{rem}

  For $\mu \in \A^{1,\ad}_L$ of type $2$ or $5$,
  we put $\mpd (\mu)$ the minimum of $[L(a'),L]$ over all $ a' \in L^{\alg}$ such that $\mu$ corresponds to $(a',r)$ or $(a',r,\epsilon)$ for some $r$ or $(r,\epsilon)$.

  \begin{lem}{\cite[Theorem 3.9 b)]{APZ90}}\label{mpd near type 1 point}
  For any $a \in L^{\alg}$, 
  there exists $r \in v_L((\hat{L^{\alg}})^{\times})$
  such that $\mpd (w) = [L(a):L]$,
  where $w$ is the valuation corresponding to $(a,r)$. 
  \end{lem}

  Let $w \in \A^{1,\ad}_{L}$ be a point of type $2$  corresponding to $(a,r)$.
  We put $u_{w,\infty} \in \A^{1,\ad}_{L}$ be the specialization of $w$ corresponding to $(a,r,-1)$, which does not depend on the choice of $(a,r)$.
  We also put $u_{w,0} \in \A^{1,\ad}_{L}$ the specialization of $w$ corresponding to $(0,r,1)$ when $w$ corresponds to $(0,r)$ ($r \in \R$), 
  and put $u_{w,0}:= u_{w,\infty} \in \A^{1,\ad}_{L}$ otherwise.

  We put $\kappa(w)\cap \kappa(L)^{\alg} $ the subfield of the residue field $\kappa (w) $ consisting of elements algebraic over $\kappa(L)$.
  By Remark \ref{rem;val;vert;special;resi;fld;val}, 
  for a specialization $u$ of type $5$  of  $w $, we have
  a natural injection
  $\kappa(w)\cap \kappa(L)^{\alg} \hookrightarrow \kappa(u)$.
  
  \begin{lem}{\cite[Theorem 2.1 and Corollary 2.1]{APZ88}}\label{rem;P1;adic;formal;model}
  Let $w \in \A^{1,\ad}_{L}$ be a point of type $2$  corresponding to $(a,r)$ ($a\in L^{\alg}$, $r \in \R$),
  and $u \in \A^{1,\ad}_{L}$ a specialization corresponding to $(a,r,\epsilon)$ ($\epsilon \in \{ \pm 1\}$).
  \begin{enumerate}
  \item   The residue field $\kappa(w)$ is isomorphic to the one variable rational function field over $\kappa(w) \cap \kappa(L)^{\alg}$. \label{enu:1:rem:P1:adic:formal:model}
  	\item Let $\mu := w$ or $u$. 
  	Then the reductions of polynomials $g \in L[T]$ of degree $< \mpd (\mu)$ 
  	with $\mu(g) =0$
  	generate the multiplicative group $(\kappa(\mu) \cap \kappa(L)^{\alg})^{\times}$, 
  	and we have $v_L(g(a))=0$.
  	 This gives a canonical inclusion
  	 $$ \kappa (\mu) \cap \kappa (L)^{\alg} \hookrightarrow \kappa(L(a)).$$ \label{enu:2new:rem:P1:adic:formal:model}
  	  When $\mpd (\mu)=[L(a),L]$,  this inculsion is an equality. \label{enu:3new:rem:P1:adic:formal:model}
       \label{enu:3:rem:P1:adic:formal:model}
  \end{enumerate}
  \end{lem}
  \begin{proof}
  (\ref{enu:1:rem:P1:adic:formal:model}) is \cite[Corollary 2.1]{APZ88}.
  
  (\ref{enu:2new:rem:P1:adic:formal:model})
  $\mu (g)= v_L(g(a))$ 
  can be proved in the same way as \cite[Theorem 2.1 a)]{APZ88}.
  The other parts of 
  (\ref{enu:3new:rem:P1:adic:formal:model}) can be proved in the same way as  \cite[Theorem 2.1 d)]{APZ88}.
  \end{proof}

  \begin{cor}\label{enu:2:rem:P1:adic:formal:model}
  We have 
     $$ \kappa (w) \cap \kappa (L)^{\alg} = \kappa (u_{w,\infty}) = \kappa (u_{w,0}).$$
  \end{cor}
  \begin{proof}
  We have $\mpd (w)=\mpd (u_{w,\infty})$.
  Moreover, when $w$  corresponds to $(0,r)$, we obviously have
  $$\kappa (u_{w,0}) \cap \kappa (L)^{\alg} = \kappa(L(0))=\kappa (L).$$
  Hence the assertion
  follows from Lemma \ref{rem;P1;adic;formal;model} (\ref{enu:3new:rem:P1:adic:formal:model}).
  \end{proof}
  
  By Remark \ref{rem;val;vert;special;resi;fld;val} and Lemma \ref{rem;P1;adic;formal;model} (\ref{enu:1:rem:P1:adic:formal:model}),  we have bijections
  \begin{align*}
  & \{ \text{non-trivial specializations of } w \in  \A^{1,\ad}_{L}\} \\
   \cong & \{ \text{equivalence classes of non-trivial valuations of } \kappa(w) \text{ which are trivial on } \kappa(L) \} \\
   \cong & \{ \text{closed points of }  \P^{1}_{\kappa(w) \cap \kappa(L)^{\alg}} \}.
   \end{align*}
  For a specialization $u$ of $w$, we put $\overline{u}$ the corresponding valuation of $\kappa(w)$.
   
 \subsection{Tropicalizations and skeletons of the affine line}\label{subsec;trop;p1}
  We shall give explicit descriptions of tropicalizations and tropical skeletons of the affine line.
  We fix toric structures of affine spaces.

  \begin{rem}\label{rem[x,inf][a,infty]}
  Let $a \in L$, 
  and $(T-a) \colon \A^{1}_{L} \to \A^{1}_{L}$ the morphism given by $T\mapsto T-a$.
  We identify $\Trop (\A^1) = \R \cup \{ \infty \}$.
  Then for $x\in \R \subset \Trop (\A^1)$, we have 
  $$(\Trop \circ (T-a))^{-1}(x)= \M (L\{(e^{x} (T-a))^{\pm 1}\}) $$
  (see \cite{Ber90} for the affinoid domain $\M (L\{(e^{x} (T-a))^{\pm 1}\})$).
  Its Shilov bounary constists of one point,
  and we have
  $\Sk_{(T-a)} \A^{1,\Ber}_{L} = [a,\infty) \cap \A^{1,\Ber}_L$.
  There is a retraction
  $ \A^{1,\Ber}_{L} \to [a,\infty) \cap \A^{1,\Ber}_L$ 
  which  gives 
  a commutative diagram
  $$\xymatrix{
  \A^{1,\Ber}_{L} \ar[d] \ar[dr]^-{\Trop \circ (T-a)} 
     & \\
   [a,\infty) \cap \A^{1,\Ber}_L  \ar[r]^-{\cong} & \R \cup \{\infty\}. \\
  }$$ 
  
  We have a generalization. 
   Let $T-a_i \in L[T]$ ($i\in I$).
   We have a morphism 
  $(T-a_i)_i \colon \A^{1}_{L} \to (\A^{1}_{L})^{\# I}$.
  For 
  $$x=(x_i)_{i\in I} \in \Trop((T-a_i)_i(\A_{L}^{1,\Ber})) \cap \R^{\# I}
  \subset  \Trop (\A^{\# I}_L)  
  ,$$ 
  we have 
  $$(\Trop \circ (T-a_i)_i)^{-1}(x) 
  \cong \M (L\{(e^{x_i } (T-a_i))^{\pm 1}\}_{i\in I}),$$ 
  which is 
  isomorphic to $\M (L \{ S^{\pm 1} , (S-b_l)^{-1}\}_l )$
  for an indeterminant $S$ and some $b_l \in L$ with $v(b_l)=0$ when $x\in v_L(L^\times)^n $,  
  and is a single point of type $3$ when $x\notin v_L(L^\times)^n $.
  In any case, its Shilov boundary constists of one point, 
  and we have
  $$\Sk_{(T-a_i)_i} \A^{1,\Ber}_{L} = \bigcup_i [a_i,\infty) \cap \A^{1,\Ber}_L.$$
  There is a retraction
  $ \A^{1,\Ber}_{L} \to  \bigcup_i [a_i,\infty) \cap \A^{1,\Ber}_L$ 
  which  gives a commutative diagram
  $$\xymatrix{
  \A^{1,\Ber}_{L} \ar[d] \ar[dr]^-{\Trop \circ (T-a_i)_i} 
     & \\
   \bigcup_i [a_i,\infty) \cap \A^{1,\Ber}_L  \ar[r]^-{\cong} & \Trop ((T-a_i)_i (\A^{1,\Ber}_{L} )) .\\
  }$$ 
  \end{rem}
   
  \begin{lem}\label{skeletons of A1:label}
   Let $f =(f_i)_i \colon \A^1_L  \rightarrow \A^r_L$ be a closed immersion.
   We put $a_{i,k}$ the zeros of $f_i$. 
   Then we have 
   $$\Sk_{f} \A^1_L = \bigcup_{i,k}[a_{i,k},\infty)\cap \A^{1,\Ber}_L.$$
  \end{lem}
  \begin{proof}
  By Lemma \ref{skeleton base change},  we may assume that $L$ is algebraically closed.
  Let $a = (a_{i,k})_{i,k} \colon \A^1_L  \rightarrow \A^s_L$.
  Then there is a natural finite-to-one map 
  $$ \Trop (a (\A^1_L)) \twoheadrightarrow \Trop (f (\A^1_L)).$$
  Hence fibers of $  \Trop \circ f  $ are disjoint unions of finitely many fibers of $\Trop \circ a$. 
  In particular, $\Sk_f \A^1_L = \Sk_a \A^1_L$.
  Hence the assertion follows from Remark \ref{rem[x,inf][a,infty]}.
  \end{proof}
  
  We put 
   $$\Sk_{f} \A^{1,\ad}_L := \bigcup_{i,k}[a_{i,k},\infty).$$

  \begin{lem}\label{tropicalization A1 as retraction}
   Let $f $ and $a_{i,k}$ be as in Lemma \ref{skeletons of A1:label},
  and 
  $$a \in \A^{1,\ad}_L  \setminus \bigcup_{i,k}[a_{i,k},\infty)$$ 
  a point of type $1$. 
  We put $w \in \A^{1,\ad}_L $  the point (of type $2$) such that 
  $$ [a,\infty) \cap  \bigcup_{i,k}[a_{i,k},\infty)
   = [w,\infty) .$$
  Then we have 
  $$ \Trop (f (([a,\infty) \setminus [w,\infty) )\cap \A^{1,\Ber}_L) )
    = \Trop ( f (w) ).$$
  \end{lem}
  \begin{proof}
  This follows from Lemma \ref{skeleton base change} and Remark \ref{rem[x,inf][a,infty]}.
  \end{proof}

  \begin{lem}\label{existence of nice tropicalization of A1}
   Let $f $ and $a_{i,k}$ be as in Lemma \ref{skeletons of A1:label},
   and $L' \subset L$ a dense subfield.
   Then there exist a morphism 
   $g =(g_{i,k})_{i,k} \colon \A^1_L \to \A^s_L$ given by polynomials $g_{i,k} \in L'[T]$ irreducible in $L[T]$
   such that $\Trop \circ (f,g)$ induces a homeomorphism
   $$
    \big( \bigcup_{i,k}[a_{i,k},\infty)
    \cup  [b_{i,k}, \infty)  \big) 
    \cap \A^{1,\Ber}_L
   \cong \Trop ((f,g) (\A^1_L)),$$
   where $b_{i,k}$ is the zero of $g_{i,k}$.
  \end{lem}
  \begin{proof}
  By Lemma \ref{mpd near type 1 point}, 
  for each irreducible factor $f_{i,k} \in L[T]$ of $f_i$, 
  a polynomial $g_{i,k} \in L'[T]$ close to $f_{i,k}$ is irreducible over $L$.
  For $g_{i,k}$ sufficiently close to $f_{i,k}$,
  we get the isomorphism in the assertion using Lemma \ref{tropicalization A1 as retraction} for both $(f, a:=b_{i,k})$ and $(g,a:=a_{i,k})$ ($i,k$). 
  \end{proof}

 \subsection{Tropicalizations of $\A^1$-fibers}\label{subsec;A1;fiber}
  In this subsection, we recall some facts about tropicalizations of fibers of the projection 
  $ \pi \colon X \times \A^1 \to X$ for algebraic variety $X$ over a trivially valued field $K$.
  Recall that we have
  $$X^{\Ber}/ (\text{the equivalence relation of valuations})
  \cong X^{\ad,\height \leq 1} \subset X^{\ad}.$$
  Let $v \in X^{\Ber}$ be a valuation of height $1$. We put $[v] \in X^{\ad}$ its equivalence class.
  
  \begin{rem}\label{rem;pi-1v;Ber;ad;identify}
  There is a natural inclusion
  $$\A^{1,\ad}_{k(\supp(v))_v} \hookrightarrow \pi^{-1}([v])$$
  whose image is the subset consisting of (possibly trivial) specializations of valuations in $\pi^{-1}([v])$ of height $1$.
  We have the following commutative diagram:
  $$\xymatrix{
  &  \A^{1,\Ber}_{k(\supp(v))_v} \ar[r]^-{\cong}\ar@{^{(}->}[d] & \pi^{-1}(v) \ar@{^{(}->}[d] \ar@{^{(}->}[r] 
  & (X \times \A^1)^{\Ber}  \ar[d] 
  \\
  &  \A^{1,\ad}_{k(\supp(v))_v}  \ar@{^{(}->}[r] &\pi^{-1}([v])   \ar@{^{(}->}[r]
  & (X \times \A^1)^{\ad}   ,
  }$$ 
  see \cite[Section 3.1]{Ber90} and \cite[Definition/Exercise 4.1.7.1]{Tem15}.
  We identify elements of the sources and the images of the above injections.
  \end{rem}
  
  Let $\varphi \colon X \times \A^1 \to \A^r$ be a closed immersion over $K$. (We fix a toric structure of $\A^r$.)
  By abuse of notation, we put 
  $$\varphi \colon \A^1_{k(\supp(v))_v} \to \A^r_{k(\supp(v))_v}$$
  the morphism induced by a natural morphism
  $$\iota \colon \A^1_{k(\supp (v))} \cong \pi^{-1}(\supp(v)) \to X \times \A.$$
  We have a commutative diagram 
  $$\xymatrix{
  \A^{1,\Ber}_{k(\supp(v))_v}   
    \ar[d]^-{\Trop \circ \varphi}   
   \ar@{^{(}->}[r]  
   \ar@{^{(}->}[ddr] &
  (X \times \A^1)^{\Ber} \ar[d]_-{\Trop \circ \varphi} 
    \ar[ddr] &
   \\
   \Trop(\varphi(\A^{1,\Ber}_{k(\supp(v))_v}  )) \ar@{^{(}->}[r]\ar@{^{(}->}[ddr] &
   \Trop(\varphi((X \times \A^1)^{\Ber})) \ar[ddr]  &
    \\
    &
   \A^{1,\ad}_{k(\supp(v))_v} 
    \ar[d]^-{\Trop \circ \varphi}   \ar@{^{(}->}[r]   &
  (X \times \A^1)^{\ad} \ar[d]_-{\Trop \circ \varphi} \\
  &
   \Trop^{\ad} (\varphi(\A^{1,\ad}_{k(\supp(v))_v}  )) \ar@{^{(}->}[r] &
   \Trop^{\ad} (\varphi((X \times \A^1)^{\ad}))  , 
  }$$ 
  where 
  $\Trop^{\ad} (\varphi(\A^{1,\ad}_{k(\supp(v))_v}  )) $ is defined as the image of the composition 
  $$
   \A^{1,\ad}_{k(\supp(v))_v} 
   \hookrightarrow
  (X \times \A^1)^{\ad} 
  \twoheadrightarrow 
   \Trop^{\ad} (\varphi((X \times \A^1)^{\ad})) . $$ 
  The image of 
  $$ \Trop(\varphi(\A^{1,\Ber}_{k(\supp(v))_v}  )) 
  \hookrightarrow
  \Trop^{\ad} (\varphi(\A^{1,\ad}_{k(\supp(v))_v}  )) 
  $$  
  is the subset of height $1$ points.

  We assume that 
  $$\varphi = (f_i)_{1 \leq i \leq r} 
  \colon \A^1_{k(\supp(v))_v} \to \A^r_{k(\supp(v))_v}$$
  induces a bijection
   $$\Sk_{\varphi} \A^1_{k(\supp(v))_v} 
   = \bigcup_{i,k}[a_{i,k},\infty)\cap \A^{1,\Ber}_{k(\supp(v))_v}
   \cong \Trop (\varphi (\A^1_{k(\supp(v))_v})),$$
   where $a_{i,k}$ runs through all zeros of $f_i$,
   and the equality is Lemma \ref{skeletons of A1:label}.
  Then we have 
  $$\Sk_\varphi \A^{1,\ad}_{k(\supp (v))_v} 
  \cong 
  \Trop^{\ad} (\varphi(\A^{1,\ad}_{k(\supp(v))_v}  )) 
  . $$

  \begin{rem}
  In a similar way to the trivially valued case, 
  we can directly define tropicalizations and skeletons of adic spaces associated to algebraic varieties over complete non-trivial valuation fields.
  However, this is excessive for our purpose. We instead adopt the above ad hoc approach.
  \end{rem}

  The projection
  $\pi \colon X \times \A^1 \rightarrow X $
  induces a surjective map
  $\pi^{\circ} \colon (X \times \A^1)^{\circ} \rightarrow X^{\circ}$.
  We will study 
  a subset $(\pi^{\circ})^{-1}(v)$ of the fiber $\pi^{-1}(v)$ for $v \in X^{\circ}$.
  The canonical homeomorphism $\pi^{-1}(v)\cong \A^{1,\Ber}_{k(\supp(v))_v}$
  induces a homeomorphism
  $$(\pi^{\circ})^{-1}(v) \cong 
  \{w \in \A^{1,\Ber}_{k(\supp(v))_v} \mid w(T) \geq 0 \} .$$ 
  By abuse of notation, we put 
  $$(\pi^{\circ})^{-1}([v])  := 
  \{w \in \A^{1,\ad}_{k(\supp(v))_v} \mid w(T) \geq 0 \}  
  \subset (X\times \A^1)^{\ad}.   $$ 
  We put
  \begin{align*}  
  \Sk_{\varphi}((\pi^{\circ})^{-1}(v))
  :=  & \Sk_{\varphi}\A^{1,\Ber}_{k(\supp(v))_v} \cap (\pi^{\circ})^{-1}(v), \\
  \Sk_{\varphi}((\pi^{\circ})^{-1}([v]))
  :=  & \Sk_{\varphi}\A^{1,\ad}_{k(\supp(v))_v} \cap (\pi^{\circ})^{-1}([v]) .
  \end{align*}

\section{$\A^1$-homotopy invariance}\label{sec;trocoho;P^1;fiber}
   In this section, we shall prove Proposition \ref{Keyprp},
   i.e., $\A^1$-homotopy invariance of tropical cohomology.
   Let $X  $ be a smooth quasi-projective variety over a trivially valued field $K$.
   We put
   $\pi \colon X \times \A^1 \rightarrow X $
   the first projection.  
   
   By five lemma, to prove Proposition \ref{Keyprp}, we may assume that $Z= X$.
   By Corollary \ref{cor;X;circ;trocoho;T;Z}, 
   Proposition \ref{Keyprp} follows from
   an isomorphism
   \begin{align}\label{equation goal}
   H^{q}((X \times \A^1)^{\circ}, \F^p_{(X\times \A^1)^{\circ}}) \cong H^{q}(X^{\circ}, \F^p_{X^{\circ}} ),
   \end{align}
   where we put e.g., $\F_{X^{\circ}}^p:= \F^p_{X}|_{X^{\circ}}$.
   We fix $v_0 \in X^{\circ}$.
   We shall show that 
   $R^i \pi^{\circ}_* \F_{(X \times \A^1)^{\circ},v_0}^r=0 $ 
   ($i \geq 1$) (Corollary \ref{R2 pi_*=0}, Corollary \ref{R1 pi_* =0})
   and 
   $\F_{X^{\circ},v_0}^r \cong \pi^{\circ}_* \F_{(X\times \A^1)^{\circ},v_0}^r$
   (Corollary \ref{pi_*=F}).
   Isomorphism (\ref{equation goal}) follows from these.
   
   In the rest of this section, we assume that $v_0$ is of height $1$.
   When $v_0$ is of height $0$, the proof is much easier. We omit it.
   To simplify notation, we assume that $X$ is affine and irreducible, and $\supp v_0$ is the generic point of $X$. The general case easily follows from this case by using retractions.

 \subsection{Notations}\label{subsec Restrictions to fibers}
  
  We fix elements $a_1,\dots,a_{\rank v_0} \in K(X)$ 
  such that $v_0(a_1),\dots, v_0(a_{\rank v_0})$ form a basis of a $\Q$-vector space $\Q \cdot \Gamma_{v_0}$.
  We may assume that 
  $a_1,\dots, a_{\rank v_0} \in \O(X)$.
  We put 
  $K(X)_{v_0}$  the $v_0$-adic completion of the function field $K(X)$.
  By abuse of notation, for 
  a morphism $\psi$ from $X \times \A^1$ to a toric variety over $L$, 
  let $\psi $ also denote its base change from 
  $$\A^1_{K(X)_{v_0}}  \cong (X\times \A^1) \times_X \Spec K(X)_{v_0}
   $$  to the toric variety over $K(X)_{v_0}$.
   We fix toric structures of affine spaces and tori.

  We put $I_{X\times \A^1 }$ the small category whose objects are 
  closed immersions 
  $$\varphi= (\varphi',  (a_i)_{i=1}^{\rank v_0} ) 
  \colon 
  X\times \A^1
  \to  \A^r \times \G_m^{\rank v_0}  $$
  over $K$
  such that 
  $\Trop(\varphi ( (\pi^{\circ})^{-1} (v_0) ))$ is not a single point, 
  the map $\Trop \circ \varphi$
  induces a bijection
  $$\Trop \circ \varphi |_{
    \Sk_{\varphi} (\A^1_{K(X)_{v_0}}) 
  }  \colon \Sk_{\varphi} (\A^1_{K(X)_{v_0}}) 
  \cong \Trop(\varphi ( \A^1_{K(X)_{v_0}} )) ,$$
   and 
  $\varphi(X \times \A^1)$ intersects with the dense torus $\G_m^{r + \rank v_0} \subset \A^r \times \G_m^{\rank v_0} $, 
  and morphisms from $\varphi_1 $ to $\varphi_2$ 
  $$(\varphi_i \colon X\times \A^1 \to  \A^{r_i} \times  \G_m^{\rank v_0} \ (i=1,2) ) $$
  are
  toric morphisms $\psi' \colon \A^{r_2}  \to \A^{r_1} $ 
  such that 
  $$\xymatrix{
  X\times \A^1 \ar[r]^-{\varphi_1 } \ar[rd]_-{\varphi_2 } &  \A^{r_1} \times \G_m^{\rank v_0} \\
  & \A^{r_2} \times  \G_m^{\rank v_0} \ar[u]_-{\psi' \times  \id_{ \G_m^{\rank v_0}} }
  }$$
  is commutative. 
  By Lemma \ref{existence of nice tropicalization of A1}, 
  there are sufficiently many objects in $I_{X \times \A^1}$.
  We will compute 
   $R^i \pi^{\circ}_* \F_{(X \times \A^1)^{\circ},v_0}^r $ 
  using $I_{X\times \A^1}$. 

  We put $T_{\Sigma'} :=  \A^r$ when we emphasize that $\A^r$ is a toric variety,  and put $\Sigma'$ the corresponding fan. 
  We put $M' $ the free $\Z$-module such that $\Spec K[M'] \subset  T_{\Sigma'}$ is the dense torus. 
  We put 
  $\G_m^{\rank v_0} =\Spec K[\tilde{a}_i^{\pm 1}]_{i=1}^{\rank v_0}$,
  where $\tilde{a}_i $ maps to $a_i$.
  We put $M:=M' \oplus \Z \langle \tilde{a}_i\rangle_{i=1}^{\rank v_0}$.
  We put $T_{\Sigma} := T_{\Sigma'} \times \G_m^{\rank v_0}$ a toric variety, and $\Sigma$ the corresponding fan.
  There is a natural bijection $\Sigma \cong \Sigma'$. 
  For a cone $\sigma \in \Sigma$, let $\sigma'$ denote its image in $\Sigma'$.
  We put $N:= \Hom (M,\Z)$ and $N' := \Hom (M', \Z)$.

  We have 
  $$\Trop(\varphi(\pi^{-1}(v_0) )) = \Trop(\varphi'(\pi^{-1}(v_0)  )) \times \{ (v_0 (a_i))_{i=1}^{\rank v_0} \}.$$
  We often identify 
  $\Trop(\varphi(\pi^{-1}(v_0) )) $ and $ \Trop(\varphi'(\pi^{-1}(v_0)  )) .$
  We also identify 
  $\Trop(\varphi ((\pi^{\circ})^{-1} (v_0)))$
  and points of 
  $\Trop^{\ad} (\varphi ((\pi^{\circ})^{-1} ([v_0])))$
  of height $1$. 

  Let $\Lambda$ be  a sufficiently fine fan structure of $\Trop(\varphi(X \times \A^1))$. 
  Then 
  $$\Xi_1
  := \Lambda \cap \Trop(\varphi(\pi^{-1}(v_0) ))
  := \{ P \cap  \Trop(\varphi(\pi^{-1}(v_0) ))\}_{P \in \Lambda} $$
  is
  a $\Gamma_{v_0}$-rational polyhedral complex
  structure of  
  $\Trop(\varphi(\pi^{-1}(v_0) )) \subset \Trop(T_{\Sigma})$,
  there is a subcomplex $\Xi_1^{\circ} \subset \Xi_1$ 
  whose support is $\Trop(\varphi ((\pi^{\circ})^{-1} (v_0)))$,
  and
  for  a cone $\sigma' \in \Sigma'$, 
  a polyhedral complex structure 
  $\Xi_1 \cap N'_{\sigma',\R}$ of
   $\Trop (\varphi'(\pi^{-1}(v_0)) )\cap N'_{\sigma',\R}$
  is the restriction of 
  a $\Gamma_{v_0}$-admissible tropical fan  $\Xi_{\sigma'}$ 
  in $N'_{\sigma',\R} \times \R_{\geq 0}$ 
  for $\varphi'(\A^1_{K(X)_{v_0}}) \cap O(\sigma')_{K(X)_{v_0}},$ 
  where the polyhedral complex $\Xi_1 $ is also considered as a polyhedral complex structure of 
  $ \Trop(\varphi'(\pi^{-1}(v_0)  )) \subset \Trop(T_{\Sigma'})$.
  (See Subsection \ref{subsec;trop;triv;val} for $\Gamma_{v_0}$-admissible tropical fans.)
  We have a natural map 
  $$ 
  \Trop^{\ad}_{\Lambda} \big( \varphi \big( (\pi^{\circ})^{-1} ([v_0]) \big) \big)
  \ni P \mapsto 
  P \cap \Trop (\varphi ((\pi^{\circ})^{-1} (v_0) ))
  \in 
  \Xi_{1}^{\circ  }.$$ 
  We put 
  $$
  \Trop^{\ad}_{\Xi_1^{\circ}} \circ \varphi
  \colon  (\pi^{\circ})^{-1} ([v_0]) 
  \twoheadrightarrow
  \Xi_{1}^{\circ  }$$
  the composition of this map and $\Trop^{\ad}_{\Lambda} \circ \varphi $.

  For $x \in \Trop^{\ad} (\varphi ((\pi^{\circ})^{-1} ([v_0]) ))$, 
  we put $v_x $ the unique element in 
  $$ (\Trop^{\ad} \circ \varphi)^{-1} (x) \cap 
    \Sk_{\varphi} (\pi^{\circ})^{-1} ([v_0]) .$$
  For $\xi_1 \subsetneq \xi_2 \in \Xi_1^{\circ}$ with $\xi_1 \in N_\R$, 
  we put $u_{\xi_1,\xi_2} \in \Sk_{\varphi} (\pi^{\circ})^{-1} ([v_0])$ 
  the specialization of $v_{\xi_1}$ which maps to $\xi_2$.

  The following is used in the next subsection. 
  \begin{lem}\label{reduction of A1}
    Let $\sigma \in \Sigma$ be a cone, 
     $\xi \in \Xi_1^{\circ} \cap N_{\sigma,\R}$.
    We put $P_\xi \in \Lambda \cap N_{\sigma,\R}$
    the minimal cone such that 
    $$P_\xi \cap \Trop (\varphi ( (\pi^{\circ})^{-1} (v_0) )) = \xi.$$
    Then 
    $$\overline{
   \bigcup_{ v \in (\pi^{\circ})^{-1}( [v_0] ) 
   \cap (\Trop^{\ad}_{\Xi_1^{\circ}} \circ \varphi)^{-1}(\xi) } 
    \Cent_{T_{P_\xi}} (v)}$$
    is irreducible, and its generic point is $\Cent_{T_{P_\xi}} (v_x)$,
    where  
    $x \in \Trop^{\ad} (\varphi ( (\pi^{\circ})^{-1} ([v_0]) ))$ maps to $\xi$, 
    and
  the centers $\Cent_{T_{P_\xi}} $ of valuations and the closure are taken in the affine toric variety $T_{P_\xi} \supset O(\sigma)$ corresponding to the cone $P_\xi$.
  \end{lem}
  \begin{proof}
  Note that $\Cent_{T_{P_\xi}} (v)$ is contained in the closed orbit $O(P_\xi) \subset T_{P_\xi}$.
  We put $
  c(\xi):=\overline{ \R_{\geq 0} (\xi \times \{1\})} $ 
  the cone in  
  the $\Gamma_{v_0}$-admissible tropical fan  $\Xi_{\sigma'}$ 
   such that $c(\xi) \cap (N'_{\sigma',\R} \times \{1 \} )= \xi$.
  We have a commutative diagram 
  $$\xymatrix{
    & O(\sigma)_K  \ar@{_{(}->}[r]
    & T_{\Lambda \cap N_{\sigma,\R},K} 
    & O(P_\xi)_K  \ar@{_{(}->}[l]
    \\
  \varphi'(\A^1_{K(X)_{v_0}}) \cap O(\sigma')_{K(X)_{v_0}} 
  \ar[r]^-{\varphi'} \ar[ru]^-{\varphi}
    & O(\sigma')_{K(X)_{v_0}}  \ar@{_{(}->}[r] \ar[u]
    & \T_{\Xi_{\sigma'}  ,\O_{v_0}} \ar[u]
    & O(c(\xi))_{\kappa(v_0)}  \ar@{_{(}->}[l] \ar[u],
  }$$ 
  where 
  $\T_{\Xi_{\sigma'}  ,\O_{v_0}}$ is the toric scheme over $\O_{v_0}$ corresponding to $\Xi_{\sigma'}$ (see Subsection \ref{subsec;trop;triv;val}, and more precisely, \cite[7.7]{Gub13}), 
  the horizontal arrows except $\varphi'$ are natural inclusions, 
   the first vertical arrow is given by a morphism 
   $$(a_i)_{i=1}^{\rank v_0} \colon K(X)_{v_0} \to \G_m^{\rank v_0},$$
   the second one is the natural extension of the first one, 
   and the third one is the restriction of the second one.
   Hence the second assertion for 
   $x $ of height $1$ and the first assertion 
   follows from Remark \ref{tropical compactifications and initial degeneration}
   and 
   the fact 
   that
   the inverse images of generic points under reduction maps are Shilov boundaries (\cite[Subsection 2.13]{GRW17}). 
   When $x $ is of height $2$, 
   the polyhedron $\xi$ is of dimension $1$,
   and 
   hence 
   $$ 
    \overline{\varphi'(\A^1_{K(X)_{v_0}}) \cap O(\sigma')_{K(X)_{v_0}} }
    \cap O(c(\xi))_{\kappa(v_0)}  $$
   consists of a single point.
   Hence  the second assertion for $x$ of height $2$ holds.
  \end{proof}

  \begin{rem}\label{reduction of A1 2}
  The morphism 
  $$ O(c(\xi))_{\kappa(v_0)}  \to O(P_\xi)_K $$
  in proof of Lemma \ref{reduction of A1} 
  becomes isomorphism after a base extension.
  By Remark \ref{tropical compactifications and initial degeneration}, 
  when $\xi $  is of dimension $1$, 
  for a point $x \in \xi$ of rank $= \rank v_0$, 
  the natural morphism
  \begin{align*}
   & 
    \overline{\varphi'(\A^1_{K(X)_{v_0}}) \cap O(\sigma')_{K(X)_{v_0}} }^{\T_{c(x),\O_{v_0}}}
    \cap O(c(x))_{\kappa(v_0)}    \\
    \to  &
    \overline{\varphi'(\A^1_{K(X)_{v_0}}) \cap O(\sigma')_{K(X)_{v_0}} }^{\T_{\Xi_\sigma', \O_{v_0}}}
    \cap O(c(\xi))_{\kappa(v_0)}  
  \end{align*}
  is a trivial $\G_m^1$-fibration after a base extension,
  where 
  $$c(x):= \R_{\geq 0} (x \times \{1\}) \subset N_{\sigma',\R} \times \R_{\geq 0}$$
  is a cone, 
  and the closures are taken in toric schemes $\T_{c(x),\O_{v_0}}$
  and $\T_{\Xi_\sigma', \O_{v_0}}$ over $\O_{v_0}$, respectively.
  \end{rem}

 \subsection{Resolutions}\label{sub:resolutions}
  In this subsection, we shall give a resolution of  $\F^p_{(X\times \A^1)^{\circ}} |_{(\pi^{\circ})^{-1}(v_0)}$.

  \begin{dfn}
    Let $\sigma \in \Sigma$ be a cone, 
     $\xi \in \Xi_1^{\circ} \cap N_{\sigma,\R}$.
    We put $P_\xi \in \Lambda \cap N_{\sigma,\R}$
    the minimal cone such that 
    $$P_\xi \cap \Trop (\varphi ( (\pi^{\circ})^{-1} (v_0) )) = \xi.$$
  We put 
  $F^p_{``\overline{(\pi^{\circ})^{-1}([v_0])}''}(\xi)$ the coimage of the natural morphism 
  $$\bigwedge^p (M\cap \sigma^{\perp})_\Q 
  \to \prod_{ v \in (\pi^{\circ})^{-1}([v_0]) \cap 
  (\Trop^{\ad}_{\Xi_1^{\circ}} \circ \varphi)^{-1}(\xi) }
   \F^p_{(X \times \A^1)^{\circ} , v} ,$$
  and put
  $$\overline{F}^p_{``\overline{(\pi^{\circ})^{-1}([v_0])}''}(\xi)
  := F^p \big(0_{(N_{\sigma})_{P_{\xi}  ,\R}}, 
  \Trop \big(\overline{
   \bigcup_{ v \in (\pi^{\circ})^{-1}( [v_0] ) 
   \cap (\Trop^{\ad}_{\Xi_1^{\circ}} \circ \varphi)^{-1}(\xi) } 
    \Cent_{T_{P_\xi}} (v)} 
  	\big) \big).
  $$
  \end{dfn}
  
  Let $\tilde{b}_1,\dots,\tilde{b}_{\dim P_\xi} \in M\cap \sigma^{\perp}$ form a basis of the dual of $\Span P_\xi$ with $\tilde{b}_i:= \tilde{a}_i$ ($1 \leq i \leq \rank v_0$). 
  (Note that $\dim P_\xi = \rank v_0 + \dim \xi$ for sufficiently fine $\Lambda$.)
  They give a decomposition 
  $$ (M\cap \sigma^{\perp} )_\Q \cong \Q\langle \tilde{b}_i \rangle_{i=1}^{\dim P_\xi} 
  \oplus  (M\cap \sigma^{\perp} \cap  P_\xi^{\perp})_\Q. 
  $$
  It induces a surjection 
  $$ \bigwedge^p (M\cap \sigma^{\perp} )_\Q
  \twoheadrightarrow \bigoplus_{i=0}^p  
      \bigwedge^i \Q\langle \tilde{b}_i \rangle_{i=1}^{\dim P_\xi} 
  	\otimes \overline{F}^{p-i}_{``\overline{(\pi^{\circ})^{-1}([v_0])}''}(\xi).
  $$

  \begin{lem}\label{Fp overline decomposition}
  We have 
  $$F^p_{``\overline{(\pi^{\circ})^{-1}([v_0])}''}(\xi)
  \cong 
  \bigoplus_{i=0}^p  
      \bigwedge^i \Q\langle \tilde{b}_i \rangle_{i=1}^{\dim P_\xi} 
  	\otimes \overline{F}^{p-i}_{``\overline{(\pi^{\circ})^{-1}([v_0])}''}(\xi).
  $$ 
  \end{lem}
  \begin{proof}
  When $\xi$ is of dimension $0$, 
  since 
  the elements $v(a_1), \dots, v(a_{\rank v_0})$
  extend to a basis of $\Q \cdot \Gamma_v$ 
  for
  $$v \in (\pi^{\circ})^{-1}([v_0]) \cap 
  (\Trop^{\ad}_{\Xi_1^{\circ}} \circ \varphi)^{-1}(\xi),$$
  the assertion follows from Lemma \ref{stalk of F^p}. 
  When $\xi $ is of dimension $1$,
  for
  $$v \in (\pi^{\circ})^{-1}([v_0]) \cap 
  (\Trop^{\ad}_{\Xi_1^{\circ}} \circ \varphi)^{-1}(\xi)$$
  with
  $\rank v   < \rank v_0 +\dim \xi$,
  the elements $v(a_1), \dots, v(a_{\rank v_0})$
  do not extend to a basis of $\Q \cdot \Gamma_v$. 
  However, 
  the assertion still holds by Remark \ref{reduction of A1 2}.
  \end{proof}

  \begin{rem}\label{Fp overline subset morphisms:label}
  For $\xi_1 \subsetneq \xi_2 \in \Xi_1^{\circ}$,
  by Lemma \ref{reduction of A1} and Lemma \ref{Fp overline decomposition},
  there is a natural morphism 
  $$ i_{\xi_1 \subset \xi_2} \colon F^p_{``\overline{(\pi^{\circ})^{-1}([v_0])}''}({\xi_1})
  \to F^p_{``\overline{(\pi^{\circ})^{-1}([v_0])}''}({\xi_2}),$$
  which is surjective when $\xi_1 \in N_\R$,
  and is injective when $\xi_1 \notin N_\R$.
  (When $\xi_1 \notin N_\R$, by Lemma \ref{reduction of A1} (the irreducibility), 
  we have 
  $$\overline{
   \bigcup_{ v \in (\pi^{\circ})^{-1}( [v_0] ) 
   \cap (\Trop^{\ad}_{\Xi_1^{\circ}} \circ \varphi)^{-1}(\xi_1) } 
    \Cent_{T_{P_{\xi_1}}} (v)}
    =
   \overline{
   \bigcup_{ v \in (\pi^{\circ})^{-1}( [v_0] ) 
   \cap (\Trop^{\ad}_{\Xi_1^{\circ}} \circ \varphi)^{-1}(\xi_2) } 
    \Cent_{T_{P_{\xi_2}}} (v)}$$
    via an identification $O(P_{\xi_1})= O(P_{\xi_2})$.)
  \end{rem}
  
  \begin{rem}\label{new Fp bar and Fp KT}
  For 
  $v \in (\pi^{\circ})^{-1} ([v_0]) $,
  we have 
  natural morphisms
  \begin{align*}
  F^p_{``\overline{(\pi^{\circ})^{-1}([v_0])}''}(
  \Trop^{\ad}_{\Xi_1^{\circ}}  (\varphi (v)) )
     \to  & \F^p_{(X \times \A^1)^{\circ}, v}, \\ 
  \overline{F}^p_{``\overline{(\pi^{\circ})^{-1}([v_0])}''}(
  \Trop^{\ad}_{\Xi_1^{\circ}}  (\varphi (v)) )
  \to & K_T^p (\kappa (v)/K).
  \end{align*}
  By Lemma \ref{reduction of A1} and Lemma \ref{Fp overline decomposition},
  they are injective when $v \in \Sk_{\varphi} ( (\pi^{\circ})^{-1} ([v_0]) )$.
  By Lemma \ref{stalk of F^p}, 
  we have 
  \begin{align*}
  \F^p_{(X \times \A^1)^{\circ} , v} 
  \cong & \underrightarrow{\lim}_{\varphi \in I_{X \times \A^1}}
  F^p_{``\overline{(\pi^{\circ})^{-1}([v_0])}''}(
  \Trop^{\ad}_{\Xi_1^{\circ}}  (\varphi (v)) )   ,  \\
  K_T^p (\kappa (v)/K) 
  \cong & \underrightarrow{\lim}_{\varphi \in I_{X \times \A^1}}
  \overline{F}^p_{``\overline{(\pi^{\circ})^{-1}([v_0])}''}(
  \Trop^{\ad}_{\Xi_1^{\circ}}  (\varphi (v)) )   .
  \end{align*}
  \end{rem}
  
  By Remark \ref{Fp overline subset morphisms:label},
  we can define a complex 
  $C^{p,*}_{``\overline{(\pi^{\circ})^{-1}([v_0])}''} 
  ( \Trop(\varphi((\pi^{\circ})^{-1}(v_0) ))$ 
  of cochains with $F^p_{``\overline{(\pi^{\circ})^{-1}([v_0])}''}({\xi}) $-coefficients ($\xi \in \Xi_1^\circ$),  
  similarly to 
  tropical cochains $C^{p,*} (B, \Lambda)$ in Subsection \ref{subsec;trop;cohomology;fan}.
  Similarly to the usual tropical cohomology (see Subsection \ref{subsec:trop:coh:hom}), 
  these complexes for $\varphi \in I_{X\times \A^1}$ induce a complex  of c-soft sheaves 
  $\C^{p,q}_{``\overline{(\pi^{\circ})^{-1}([v_0])}''} $ on $(\pi^{\circ})^{-1}(v_0) $. 
  It is 
  a resolution of  
  $$\F^p_{(X \times \A^1)^{\circ}} |_{(\pi^{\circ})^{-1}(v_0)} \cong 
  \Ker (\C^{p,0}_{``\overline{(\pi^{\circ})^{-1}([v_0])}''} 
  \to \C^{p,1}_{``\overline{(\pi^{\circ})^{-1}([v_0])}''}  ).$$
  In particular, we have the following.
  \begin{cor}\label{stalk cohomology}
  	$$H^q ((\pi^{\circ})^{-1}(v_0) , \F^p_{(X \times \A^1)^{\circ}} |_{(\pi^{\circ})^{-1}(v_0)} )
  	\cong H^q (\C^{p,*}_{``\overline{(\pi^{\circ})^{-1}([v_0])}''} ((\pi^{\circ})^{-1}(v_0)) ).$$
  \end{cor}

  Since $\pi^\circ  \colon (X\times \A^1)^{\circ} \to X^{\circ}$ is proper,
  by Corollary \ref{stalk cohomology}, we have 
  \begin{align*}
    (R^i \pi^{\circ}_* \F_{(X\times \A^1)^{\circ}}^r)_{v_0}  
   \cong & H^i ((\pi^{\circ})^{-1} (v_0), 
   \F^r_{(X\times \A^1)^{\circ}}  |_{(\pi^{\circ})^{-1} (v_0)})  \\
   \cong & H^i (\C^{r,*}_{``\overline{(\pi^{\circ})^{-1}([v_0])}"} ((\pi^{\circ})^{-1} (v_0)) )  \\
   \cong & \underrightarrow{\lim}_{\varphi \in I_{X \times \A^1} } H^i (C^{r,*}_{``\overline{(\pi^{\circ})^{-1}([v_0])}"} (\Trop (\varphi ((\pi^{\circ})^{-1} (v_0) )) )).
  \end{align*}
  
  \begin{cor}\label{R2 pi_*=0}
  $ R^i \pi_* \F_{(X \times \A^1)^{\circ}}^r =0$
  for $i\geq 2$.
  \end{cor}
   
 \subsection{Local descriptions}
  In this subsection,
  we shall describe a natural morphism 
  $$i_{\xi_1 \subset \xi_2} \colon F^p_{``\overline{(\pi^{\circ})^{-1}([v_0])}''}({\xi_1})
  \to F^p_{``\overline{(\pi^{\circ})^{-1}([v_0])}''}({\xi_2})$$
  for a fixed object $\varphi  \colon X \times \A^1 \to T_{\Sigma'} \times \G_m^{\rank v_0} $ in $I_{X\times \A^1}$ 
  and $\xi_1 \subsetneq \xi_2 \in \Xi_1^{\circ}$.
  
  \begin{rem}\label{local descriptions basics:label}
  Let $\sigma \in \Sigma$ be a cone, 
   $\xi  \in \Xi_1^{\circ} \cap N_{\sigma,\R}$, 
  and $x  \in \Trop^{\ad} (
    \varphi ( (\pi^{\circ})^{-1} ([v_0]) )
  )$ a point of rank $= \rank v_0 + \dim \xi$ mapping to $\xi$.
  Let $\tilde{b}_1,\dots,\tilde{b}_{\rank v_0 + \dim \xi} \in M\cap \sigma^{\perp}$ 
  form a basis of the dual of $\Span P_\xi$ with $\tilde{b}_i:= \tilde{a}_i$ ($1 \leq i \leq \rank v_0$). 
  Then 
  we have a commutative diagram 
  $$ \xymatrix{
  (M\cap \sigma^{\perp} )_\Q 
  \ar[r]^-{\cong} \ar[d] &
   \Q\langle \tilde{b}_i \rangle_{i=1}^{\rank v_0 +\dim \xi} 
  \oplus  (M\cap \sigma^{\perp} \cap  P_\xi^{\perp})_\Q 
  \ar[d] \\
  (k(\supp(v_x))^{\times})_\Q 
  \ar[r]^-{\cong} &
    \Q\langle b_i \rangle_{i=1}^{\rank v_0 +\dim \xi} \oplus \Ker (v_x\otimes \Q \colon (k(\supp(v_x))^{\times})_\Q \to \Q \cdot \Gamma_{v_x}),
  }$$
  where the support $\supp (v_x)$ is taken in $\A^1_{K(X)_{v_0}}$,
  and $b_i := \varphi (\tilde{b}_i) \in k(\supp (v_x))$.
  By Lemma \ref{stalk of F^p} and results in Subsection \ref{sub:resolutions},
  this
  induces a commutative diagram 
  \begin{align}
   \xymatrix{
  F^p_{``\overline{(\pi^{\circ})^{-1}([v_0])}''}({\xi})
  \ar[r]^-{\cong} \ar@{^(->}[d] &
    \bigoplus_{i=0}^p 
    \bigwedge^i 
   \Q\langle \tilde{b}_i \rangle_{i=1}^{\rank v_0 +\dim \xi} 
  \otimes 
  \overline{F}^{p-i}_{``\overline{(\pi^{\circ})^{-1}([v_0])}''}({\xi})
  \ar@{^(->}[d] \\
    \F^p_{(X \times \A^1)^{\circ}, v_x}
  \ar[r]^-{\cong} &
    \bigoplus_{i=0}^p 
    \bigwedge^i 
     \Q\langle b_i \rangle_{i=1}^{\rank v_0 +\dim \xi} 
    \otimes K^{p-i}_T (\kappa (v_x) /K).  
  } \label{eq compatibility Fp KTp decomposition}
  \end{align}
  \end{rem}

  We put $ \tilde{\pi}_{\xi_1,\xi_2} := \tilde{b}_{\rank v_0 +1} \in M'$ for $\xi = \xi_2$.
  Then by a decomposition in (\ref{eq compatibility Fp KTp decomposition}) for $\xi =\xi_1, \xi_2$,
   the morphism 
  $i_{\xi_1 \subset \xi_2}$
  is given by a morphism 
  \begin{align}
    \overline{F}^{p-i}_{``\overline{(\pi^{\circ})^{-1}([v_0])}"}(\xi_1) 
   \to  
   \overline{F}^{p-i}_{``\overline{(\pi^{\circ})^{-1}([v_0])}"}(\xi_2)
   \oplus
  ( \Q \langle \tilde{\pi}_{\xi_1,\xi_2} \rangle \otimes 
   \overline{F}^{p-i-1}_{``\overline{(\pi^{\circ})^{-1}([v_0])}"}(\xi_2), \label{eq morphism Fp overline subset}
  \end{align}
  which can be described using Remark \ref{new Fp bar and Fp KT} as follows.

  \begin{rem}\label{compatibility local description type 2}
  We assume $\xi_1 \in N_\R$. 
  We take $ \tilde{\pi}_{\xi_1,\xi_2}  \in M \cap P_{\xi_1}^{\perp}$. 
  Then morphism (\ref{eq morphism Fp overline subset})
  is given by a decomposition
  $$
  (M \cap P_{\xi_1}^{\perp}) \otimes \Q 
  \cong 
  \Q \langle \tilde{\pi}_{\xi_1,\xi_2} \rangle \oplus 
  (M \cap P_{\xi_2}^{\perp}) \otimes \Q .$$
  We put $\pi_{\xi_1,\xi_2} \in K(X)(T)$ the image of $ \tilde{\pi}_{\xi_1,\xi_2}$. 
  Then we have a morphism
  \begin{align*}
     K_T^{p-i}(\kappa(v_{\xi_1})/K)     
   \to
   &  K_T^{p-i}(\kappa(u_{\xi_1,\xi_2})/K) \oplus (\Q\langle \pi_{\xi_1,\xi_2} \rangle \otimes K_T^{p-i-1}(\kappa(u_{\xi_1,\xi_2})/K) ).
  \end{align*}
  	given by a  decomposition 
  \begin{align*}
  & \Ker (v_{\xi_1}\otimes \Q \colon (K(X)(T)^{\times})_\Q \to \Q \cdot \Gamma_{v_{\xi_1}}) \\
  = &  \Q\langle \pi_{\xi_1,\xi_2} \rangle \oplus
  \Ker (u_{\xi_1,\xi_2}\otimes \Q \colon (K(X)(T)^{\times})_\Q \to \Q \cdot \Gamma_{u_{\xi_1,\xi_2}}).
  \end{align*}
  By construction, this morphism can be written as 
  $$
    ( s_{\overline{u_{\xi_1,\xi_2}}}^{\overline{\pi_{\xi_1,\xi_2}}}, \partial_{\overline{u_{\xi_1,\xi_2}}}) 
    \colon 
     K_T^{p-i}(\kappa(v_{\xi_1})/K)      
     \to 
     K_T^{p-i}(\kappa(u_{\xi_1,\xi_2})/K) \oplus K_T^{p-i-1}(\kappa(u_{\xi_1,\xi_2})/K)  $$
  by an identification 
  $\Q \langle \pi_{\xi_1,\xi_2} \rangle \ni \pi_{\xi_1,\xi_2} 
  \mapsto \overline{u_{\xi_1,\xi_2}} ( \overline{ \pi_{ \xi_1,\xi_2} } ) \in \Q,$
  where 
  $\overline{u_{\xi_1,\xi_2}}$ is the normalized discrete valuation 
  of $\kappa (v_{\xi_1})$ corresponding to $u_{\xi_1,\xi_2}$
  in the sense of Remark \ref{rem;val;vert;special;resi;fld;val},
  the element $ \overline{ \pi_{ \xi_1,\xi_2} } \in \kappa (v_{\xi_1})$
  is the reduction, 
  and 
  $s_{\overline{u_{\xi_1,\xi_2}}}^{\overline{\pi_{\xi_1,\xi_2}}}
  := \partial_{\overline{u_{\xi_1,\xi_2}}} \circ  \overline{\pi_{\xi_1,\xi_2}}\wedge -$.
  By construction,  morphism (\ref{eq morphism Fp overline subset}) is compatible with this morphism of  $K_T$
  through diagram (\ref{eq compatibility Fp KTp decomposition}).
  \end{rem}

  \begin{rem}\label{local computation type 1}
  We assume $\xi_1 \notin N_\R$.
  Then morphism (\ref{eq morphism Fp overline subset})
  factors through 
  \begin{align*}
   \overline{F}^{p-i}_{``\overline{(\pi^{\circ})^{-1}([v_0])}"}(\xi_1) 
  \hookrightarrow & 
  \overline{F}^{p-i}_{``\overline{(\pi^{\circ})^{-1}([v_0])}"}(\xi_2)  \\
  \subset  &
  \overline{F}^{p-i}_{``\overline{(\pi^{\circ})^{-1}([v_0])}"}(\xi_2)  
  \oplus (\Q \langle \tilde{\pi}_{\xi_1,\xi_2} \rangle \otimes_\Q
  \overline{F}^{p-i-1}_{``\overline{(\pi^{\circ})^{-1}([v_0])}"}(\xi_2) .  
  \end{align*}
  Since $\Lambda$ (and hence $\Xi_1^{\circ}$) is sufficiently fine,
  by 
  Lemma \ref{mpd near type 1 point}
  and Lemma \ref{rem;P1;adic;formal;model} 
  (\ref{enu:3:rem:P1:adic:formal:model}),
  for $$u \in \Sk_{\varphi} ( (\pi^{\circ})^{-1} ([v_0]) ) 
  \cap (\Trop^{\ad}_{\Xi_1^{\circ}} )^{-1} (\xi_2)$$
  of height $2$, 
  we have $\kappa (u) \cong \kappa (v_{\xi_1})$.
  In paticular, we have 
  $ K_T^{p-i}(\kappa(v_{\xi_1})/K)    \cong
  K_T^{p-i}(\kappa(u)/K)  $.
  By construction,
  the above morphism 
   $$ \overline{F}^{p-i}_{``\overline{(\pi^{\circ})^{-1}([v_0])}"}(\xi_1) 
  \hookrightarrow 
  \overline{F}^{p-i}_{``\overline{(\pi^{\circ})^{-1}([v_0])}"}(\xi_2)  $$
  is compatible with this isomorphism
  through diagram (\ref{eq compatibility Fp KTp decomposition}).
  \end{rem}
   
 \subsection{Proof of $\F_{X^{\circ},v_0}^r \cong \pi^{\circ}_* \F_{(X\times \A^1)^{\circ},v_0}^r$}
  In this subsection, we shall  show $\F_{X^{\circ},v_0}^r \cong \pi^{\circ}_* \F_{(X\times \A^1)^{\circ},v_0}^r$.
  Let  $s_0 \colon X \to X \times \A^1$ be the section at $0$.
  Since the composition  $ \pi \circ s_0$ is the identity map,
  the morphism
  $\F_{X^{\circ},v_0}^r \rightarrow \pi^{\circ}_* \F_{(X\times \A^1)^{\circ},v_0}^r $
  is injective. 
  We shall show surjectivity. 
  Let 
  $$\alpha \in \F_{ (X \times \A^1)^{\circ } }^r 
  |_{ (\pi^{\circ})^{-1} (v_0) } 
  ( (\pi^{\circ})^{-1} (v_0)) $$ 
  be such that $s_o^* ( \alpha )=0$.
  We shall show that $\alpha =0$.
  There is $\varphi \in I_{X \times \A^1}$ 
  such that 
  $\alpha$ is the image of some element $\tilde{\alpha}$ in 
  $$H^0 (C^{r,*}_{``\overline{(\pi^{\circ})^{-1}([v_0])}"} (\Trop (\varphi ((\pi^{\circ})^{-1} (v_0) )) )).$$

  The following is the main tool in proof.
  Since  tropial Milnor $K$-groups form a cycle module (Theorem \ref{trKcymod}), by
  Lemma \ref{rem;P1;adic;formal;model} (\ref{enu:1:rem:P1:adic:formal:model})  and  \cite[Proposition 2.2]{Ros96}, 
  for a point $w \in (\pi^{\circ})^{-1}([ v_0])$ of type $2$, 
  we have 
  	\begin{align}
   & \Ker \big(  K_T^k (\kappa(w)/K)\xrightarrow{  (\partial_{\overline{u}})_{u \in  \overline{\{w\}} \setminus \{w, u_{w,\infty}\}  } } 
    \bigoplus_{u\in  \overline{\{w\}} \setminus \{w, u_{w,\infty}\}} K_T^{k-1}(\kappa(u)/K) \big) \notag \\
    \cong  & K_T^k (\kappa (w) \cap \kappa (v_0)^{\alg} /K),  \label{R0 A1 homtopy local computation}
  	\end{align} 
  where $u \in (\pi^{\circ})^{-1} ([v_0])$ runs through all non-trivial specializations of $w$ except for $u_{w,\infty}$. 
  (Recall that when $w$ corresponds to $(a,r)$, the valuation $u_{w,\infty}$ is a specialization of $w$ corresponding to $(a,r,-1)$.)

  For $\xi \in \Xi_1^{\circ}$, 
  we put 
  $\tilde{\alpha}_\xi $
  its restriction to 
  $$ H^0 (C^{r,*}_{``\overline{(\pi^{\circ})^{-1}([v_0])}"} ( \relint \xi ) )
  \cong F^{r}_{``\overline{(\pi^{\circ})^{-1}([v_0])}"}(\xi).$$
  For $v \in (\pi^{\circ})^{-1}( [v_0] )$, 
  we put $\alpha_v \in \F^r_{(X\times \A^1)^{\circ},v}$ the stalk of $\alpha$ at $v$.

  We shall use 
    Remark \ref{local descriptions basics:label},
   Remark \ref{compatibility local description type 2}, 
   and 
     Remark \ref{local computation type 1} freely.
  \begin{lem}\label{Fu,pi}
  For $\xi_1 \in \Xi_1^{\circ} \cap N_\R$ of dimension $1$, 
  via the decomposition in Lemma \ref{stalk of F^p},
  we have 
  $$\alpha_{v_{\xi_1}} \in 
  \bigoplus_{i=0}^r  
      \bigwedge^i \Q\langle a_j \rangle_{j=1}^{\rank v_0} 
    \otimes K^{r-i}_T (\kappa ( v_{\xi_1} ) \cap \kappa (v_0)^{\alg} /K). $$ 
  \end{lem}
  \begin{proof}
  	Suppose the assertion does not hold, i.e., 
    by
     (\ref{R0 A1 homtopy local computation}), 
     there exists a polyhedron 
    $\xi_2 \in \Xi_1^{\circ}$ of dimension $1$
    such that 
  $$\tilde{\alpha}_{\xi_2} \notin 
  \bigoplus_{i=0}^r  
      \bigwedge^i \Q\langle \tilde{a}_j \rangle_{j=1}^{\rank v_0} 
    \otimes \overline{F}^{r-i}_{``\overline{(\pi^{\circ})^{-1}([v_0])}"}(\xi_2).$$
     Since $\tilde{\alpha}$ is a cocycle, 
     we may assume that 
  $\partial_{\overline{u'}}\alpha_{v_{\xi'_1}} = 0$ 
  for a non-trivial specialization $u'\in  (\pi^{\circ})^{-1}([v_0]) \setminus \{u_{v_{\xi'_1},\infty}\} $ of $v_{\xi'_1}$,
  where 
     $\xi'_1 \in \Xi_1^{\circ}$ is the $0$-dimensional polyhedron contained in $\xi_2 \cap N_\R$
     such that  $u_{\xi'_1,\xi_2}= u_{v_{\xi'_1},\infty }$. 
  However, 
  this contradicts to (\ref{R0 A1 homtopy local computation}).
   Hence the assertion holds.
  \end{proof}

  \begin{prp}
  We have $\alpha=0$.
  \end{prp}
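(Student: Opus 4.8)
The plan is to exploit that the polyhedral complex $\Xi$ is a tree — this is the content of the remark following Assumption \ref{asm:varphi:satisfy:cond}, and it rests on Condition \ref{condinj} — and to propagate the vanishing of $\alpha$ outward from the leaf $0\in\Xi$, at which $\alpha_0=0$ is the hypothesis. The starting point is the observation that, since $\alpha\in Z^{p,0}_{\Trop}$, the cocycle relation $\delta\alpha=0$ says exactly this: for every edge $e\in\Xi$ and every $0$-dimensional $l\in\Xi$ contained in $e$, the component $\alpha_e\in F^p(P_e)$ is the image of $\alpha_l\in F^p(P_l)$ under the natural map $F^p(P_l)\to F^p(P_e)$; in particular the images of $\alpha$ at the two endpoints of an edge coincide in $F^p(P_e)$. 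I would root the tree $\Xi$ at $0$ and prove, by induction on the distance to $0$, that $\alpha_l=0$ for every $0$-dimensional $l\in\Xi$ and $\alpha_e=0$ for every edge $e\in\Xi$. For the base case, $\alpha_0=0$ gives $\alpha_e=0$ for the unique edge incident to $0$; and once $\alpha_l=0$ is known at a vertex $l$, every edge $e'$ having $l$ as its endpoint closer to $0$ has $\alpha_{e'}=0$, which both discharges the edge values and provides the hypothesis for the next stage.

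For the inductive step I must show that if $l$ is a vertex whose parent edge $e_{l,0}$ satisfies $\alpha_{e_{l,0}}=0$, then $\alpha_l=0$. If $l\notin N_\R$, then $l$ is a leaf of $\Xi$: a $0$-dimensional element of $\Xi$ outside $N_\R$ comes from a type $1$ point of $\pi^{-1}(v_0)$ (for $\varphi'=(\varphi'_0,\Id_{\A^1})$ the only way a coordinate can be $\infty$ at it is to lie in its support, forcing it to be a closed point), hence has a single incident edge, necessarily $e_{l,0}$; then the map $F^p(P_l)\to F^p(P_{e_{l,0}})$ is injective by Corollary \ref{cor;F;closed;tropK}, applied summand-by-summand in the decomposition $F^p(P_l)\cong\bigoplus_k\wedge^{p-k}\langle a_j\rangle_\Q\otimes\overline{F}^k(P_l)$ with the $\langle a_j\rangle$-parts matching, so $\alpha_{e_{l,0}}=0$ forces $\alpha_l=0$. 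If $l\in N_\R$, I write $\alpha_l=\sum_k\alpha_l^{(k)}$ with $\alpha_l^{(k)}\in\wedge^{p-k}\langle a_j\rangle_\Q\otimes\overline{F}^k(P_l)$; the vanishing $\alpha_{e_{l,0}}=0$ kills the $(\pr_1\circ\iota_{l,e_{l,0}})$-component of each $\alpha_l^{(k)}$, and Lemma \ref{Fu,pi} — applied to every edge $e$ incident to $l$, including the ones further from $0$ that the induction has not yet reached — gives $\alpha_l(\Span(e)\wedge F_{p-1}(P_e))=0$, i.e. it kills the $(\pr_2\circ\iota_{l,e})$-component of each $\alpha_l^{(k)}$. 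These are precisely the coordinates of the injective map of Lemma \ref{FA^1} (tensored with $\wedge^{p-k}\langle a_j\rangle_\Q$), so $\alpha_l^{(k)}$ lies in its kernel, whence $\alpha_l^{(k)}=0$ for all $k$ and $\alpha_l=0$.

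The main obstacle — and the reason Lemma \ref{Fu,pi} is needed rather than a bare "the restriction maps are injective" argument — is that at an interior vertex $l$ the map $F^p(P_l)\to F^p(P_{e_{l,0}})$ along the parent edge alone fails to be injective: its kernel is seen precisely on the summands $\Q\cdot g_{l,e}\otimes\overline{F}^{k-1}(P_e)$ attached to the other edges $e$ at $l$, and for those edges the induction has not yet produced $\alpha_e=0$, so the cocycle relation there is not yet available. Lemma \ref{Fu,pi}, established by the earlier descending induction on $L_a$ using the specialization structure of the $\A^1$-fiber at $\infty$ (Corollary \ref{lemadicuinftytrop}, Proposition \ref{Fbar}, Lemma \ref{Fdelta}), supplies exactly the vanishing on those summands. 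Given this, the remaining work is bookkeeping on the tree $\Xi$ together with the translation Lemmas \ref{lemFs^g_e}, \ref{Fdelta}, \ref{cor;F;closed;tropK} between the maps of the $\overline{F}$-groups and residue/specialization homomorphisms of tropical $K$-groups; no new geometric input is required.
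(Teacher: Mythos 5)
Your proof is correct and follows essentially the same route as the paper: induction outward from the leaf $0$ (the paper's increasing induction on $L_0(l)$), with the leaf case ($l\notin N_\R$) handled by Corollary \ref{cor;F;closed;tropK} and the interior case ($l\in N_\R$) by combining Lemma \ref{Fu,pi}, Lemma \ref{FA^1}, the cocycle relation, and the inductive hypothesis. Your write-up spells out the $\langle a_j\rangle$-decomposition and the role of $\pr_1\circ\iota_{l,e_{l,0}}$ versus $\pr_2\circ\iota_{l,e}$ that the paper leaves implicit, but the mathematical content is the same.
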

  \begin{proof}
  	Suppose $\alpha \neq 0$.
  	Then there exists 
    $ \xi_1 \in \Xi_1^{\circ}$
    of dimension $0$ with $\xi_1 \in N_\R$
   such that 
  $\tilde{\alpha}_{\xi_1} \neq 0 .$
  We put $\xi_2 := \Trop^{\ad}_{\Xi_1^{\circ}} (
   \varphi (u_{v_{\xi_1},0})
    )$
    a $1$-dimensional polyhedron containing $\xi_1$
     (see above Lemma \ref{rem;P1;adic;formal;model} for $u_{v_{\xi_1},0}$).
  Since $s_0^* (\alpha) =0$ and $\tilde{\alpha}$ is a cocycle, 
  we may assume that $\tilde{\alpha}_{\xi_2}=0$.
  However, 
  by 
  $\kappa (v_{\xi_1}) \cap \kappa (v_0)^{\alg}  =\kappa (u_{v_{\xi_1},0})$
  (Corollary \ref{enu:2:rem:P1:adic:formal:model})
  and 
  Lemma \ref{Fu,pi}, 
  this is a contradiction.
   Hence the assertion holds.
  \end{proof}
  
  Consequently, we have 
  \begin{cor}\label{pi_*=F}
  $\F_{X^{\circ},v_0}^r \cong \pi^{\circ}_* \F_{(X\times \A^1)^{\circ},v_0}^r$.
  \end{cor} 

 \subsection{Proof of $R^1 \pi^{\circ}_* \F_{(X \times \A^1)^{\circ},v_0}^r=0 $}
  In this subsection, 
  we shall show $R^1 \pi^{\circ}_* \F_{(X \times \A^1)^{\circ},v_0}^r=0 $.
  More presicely, 
  for $\varphi \in I_{X \times \A^1}$
  and a cocycle
  $$ \beta  
  \in 
  C^{r,1}_{\Trop} ( \Trop (\varphi( (\pi^{\circ})^{-1} ([v_0])) )) $$
  (i.e., $\partial \beta =0$),
  we shall show that there exists $\varphi_{\new} \in I_{X \times \A^1}$
  with a morphism $\psi \colon \varphi \to \varphi_{\new}$ in $I_{X \times \A^1}$, 
  such that 
  $$\overline{\psi^* (\beta)} =0\in 
  H^{r,1}_{\Trop} ( \Trop (\varphi_{\new} ( (\pi^{\circ})^{-1} ([v_0])) )).$$

  We shall use 
    Remark \ref{local descriptions basics:label},
   Remark \ref{compatibility local description type 2}, 
   and 
     Remark \ref{local computation type 1} freely.

  \begin{lem}\label{R1Fp=0 induction 1 local}
  For a polyhedron $\xi_1 \in \Xi_1^{\circ} \cap N_\R$ of dimension $0$,
  there exist $\varphi_{\new} = (\varphi, (g_i)_{i} )\in I_{X \times \A^1}$
  given by finitely many polynomials $g_i$ of degree $< \mpd (v_{\xi_1})$ 
  and 
  $$ \beta'_{\xi_1} 
  \in 
  F^{r}_{ ``\overline{(\pi^{\circ})^{-1}([v_0])}"} 
  (\Trop (\varphi_{\new} ( 
     v_{\xi_1}
    )))$$
  such that 
  for 
  $$\xi_{\new,2} \in \Xi_{\new, 1}^{\circ} 
  \setminus 
  \{ \Trop_{\Xi_{\new,1}^{\circ}}^{\ad} (\varphi_{\new} ( u_{v_{\xi_1}, \infty }
   )) \} $$
  of dimension $1$ containing $\Trop (\varphi_{\new} ( v_{\xi_1}   )) $,
    we have 
  $$\psi^*(\beta)_{\xi_{\new,2}} + 
   i_{\Trop (\varphi_{\new} (v_{\xi_1})) \subset \xi_{\new,2} }
    (\beta'_{\xi_1} )
  \in 
  \bigoplus_{i=0}^r  
      \bigwedge^i \Q\langle \tilde{a}_j \rangle_{j=1}^{\rank v_0} 
    \otimes \overline{F}^{r-i}_{``\overline{(\pi^{\circ})^{-1}([v_0])}"}(\xi_{\new,2}) ,
     $$
  where 
  for example, $\Xi_{\new,1}^{\circ}$ is defined similarly to $\Xi_1^{\circ}$,
  $$\psi^*(\beta)_{\xi_{\new,2}}
    := \psi^*(\beta) ([\xi_{\new,2}]) \in F^p_{``\overline{(\pi^{\circ})^{-1}([v_0])}"}(\xi_{\new,2}) $$
  (where the orientation of the chain $[\xi_{\new,2}]$ is inward from $\Trop(\varphi_{\new} (v_{\xi_1}))$),
  and
  $\psi \colon \varphi \to \varphi_{\new}  $ in $I_{X \times \A^1}$ is given by a projection.
  \end{lem}
  \begin{proof}
  Since  tropial Milnor $K$-groups form a cycle module (Theorem \ref{trKcymod}), by
  Lemma \ref{rem;P1;adic;formal;model} (\ref{enu:1:rem:P1:adic:formal:model})  and  \cite[Proposition 2.2]{Ros96}, 
  for a point $w \in (\pi^{\circ})^{-1}([ v_0])$ of type $2$, 
  we have a surjection
    \begin{align*}
   &   K_T^k (\kappa(w)/K)\xrightarrow{  (\partial_{\overline{u}})_{u \in  \overline{\{w\}} \setminus \{w, u_{w,\infty}\}  } } 
    \bigoplus_{u\in  \overline{\{w\}} \setminus \{w, u_{w,\infty}\}} K_T^{k-1}(\kappa(u)/K) ,
    \end{align*} 
  where $u \in (\pi^{\circ})^{-1} ([v_0])$ runs through all non-trivial specializations of $w$ except $u_{w,\infty}$. 
   Hence the assertion follows form Lemma \ref{rem;P1;adic;formal;model}
    (\ref{enu:3:rem:P1:adic:formal:model}).
  \end{proof}
  
  \begin{prp}\label{lem;prf;infty;Rpi}
  There exist $\varphi_{\new} \in I_{X \times \A^1}$
  with a morphism $\psi \colon \varphi \to \varphi_{\new}$ in $I_{X \times \A^1}$ 
  and a cocycle
  $$ \beta' 
  \in 
  C^{r,1}_{\Trop} ( \Trop (\varphi_{\new}( (\pi^{\circ})^{-1} ([v_0])) )) $$
  such that 
  $$\overline{\beta'} = \overline{\psi^* (\beta)} \in 
  H^{r,1}_{\Trop} ( \Trop (\varphi_{\new} ( (\pi^{\circ})^{-1} ([v_0])) ))$$
  and 
  for $ \xi_{\new,2} \in \Xi_{\new,1}^{\circ}$ of dimension $1$,
  	we have 
  $$ \beta'_{\xi_{\new,2}} 
  \in 
  \bigoplus_{i=0}^r  
      \bigwedge^i \Q\langle \tilde{a}_j \rangle_{j=1}^{\rank v_0} 
    \otimes \overline{F}^{r-i}_{``\overline{(\pi^{\circ})^{-1}([v_0])}"}(\xi_{\new,2})
    . $$
  \end{prp}
  \begin{proof}
  This follows from Lemma \ref{tropicalization A1 as retraction}  and Lemma \ref{R1Fp=0 induction 1 local}
  by induction on $\mpd (v_{\xi_1})$ in the descending order.
  \end{proof}

  By Proposition \ref{lem;prf;infty;Rpi},
  we may assume that 
  for $ \xi_2 \in \Xi_1^{\circ}$ of dimension $1$,
    we have 
  $$\beta_{\xi_2}
  \in 
  \bigoplus_{i=0}^r  
      \bigwedge^i \Q\langle \tilde{a}_j \rangle_{j=1}^{\rank v_0} 
    \otimes \overline{F}^{r-i}_{``\overline{(\pi^{\circ})^{-1}([v_0])}"}(\xi_2)
    . $$

  \begin{lem}\label{R1Fp=0 induction 2 local}
  For a polyhedron $\xi_1 \in \Xi_1^{\circ}$ of dimension $0$ with $\xi_1 \in N_\R$ (resp. $\xi_1 \notin N_\R$),
  there exist $\varphi_{\new} = (\varphi, (g_i)_{i} )\in I_{X \times \A^1}$
  given by finitely many polynomials $g_i$ of degree $< \mpd (v_{\xi_1})$ 
  (resp. $< [K(X)_{v_0}(v_{\xi_1}): K(X)_{v_0}]$)
  and 
  $$ \beta'_{\xi_1} 
  \in 
  F^{r}_{ ``\overline{(\pi^{\circ})^{-1}([v_0])}"} 
  (\Trop (\varphi_{\new} ( 
     v_{\xi_1}
    )))$$
  such that 
  we have 
  \begin{align*}
  \psi^*(\beta)_{\Trop^{\ad}_{\Xi_{\new,1}^{\circ}} (\varphi_{\new} ( u_{v_{\xi_1}, \infty }
   ))}=  &
   i_{\Trop (\varphi_{\new} (v_{\xi_1})) \subset
   \Trop_{\Xi_{\new,1}^{\circ}}^{\ad} (\varphi_{\new} ( u_{v_{\xi_1}, \infty }) )
   }
    (\beta'_{\xi_1} )  \\
  \in  &
  \bigoplus_{i=0}^p  
      \bigwedge^i \Q\langle \tilde{a_j} \rangle_{j=1}^{\rank v_0} 
    \otimes \overline{F}^{p-i}_{``\overline{(\pi^{\circ})^{-1}([v_0])}"}
    ( \Trop^{\ad}_{\Xi_{\new,1}^{\circ}} (\varphi_{\new} ( u_{v_{\xi_1}, \infty }
   )) )
  \end{align*}
  when $u_{v_{\xi_1}, \infty} \in (\pi^{\circ})^{-1}([v_0])$,
     and 
  for 
  $$\xi_{\new,2} \in \Xi_{\new, 1}^{\circ} 
  \setminus 
  \{ \Trop_{\Xi_{\new,1}^{\circ}}^{\ad} (\varphi_{\new} ( u_{v_{\xi_1}, \infty }
   )) \} $$
  of dimension $1$ containing $\Trop (\varphi_{\new} ( v_{\xi_1}   )) $,
    we have 
  $$ 
   i_{\Trop (\varphi_{\new} (v_{\xi_1})) \subset \xi_{\new,2} }
   (\beta'_{\xi_1} )
  \in 
  \bigoplus_{i=0}^r  
      \bigwedge^i \Q\langle \tilde{a}_j \rangle_{j=1}^{\rank v_0} 
    \otimes \overline{F}^{r-i}_{``\overline{(\pi^{\circ})^{-1}([v_0])}"}(\xi_{\new,2})
     $$
  (resp. such that we have 
  \begin{align*}
  \psi^*(\beta)_{\xi_{\new,2}}
   =  &
   i_{\Trop (\varphi_{\new} (v_{\xi_1})) \subset \xi_{\new,2} }
    (\beta'_{\xi_1} )  \\
  \in  &
  \bigoplus_{i=0}^r  
      \bigwedge^i \Q\langle \tilde{a}_j \rangle_{j=1}^{\rank v_0} 
    \otimes \overline{F}^{r-i}_{``\overline{(\pi^{\circ})^{-1}([v_0])}"}
    (\xi_{\new,2})
  \end{align*}
  where $\xi_{\new,2} \in \Xi_{\new,1}^{\circ}$
  is the unique $1$-dimensional polyhedron containing $\Trop (\varphi_{\new } (v_{\xi_1}))$),
  where $\psi \colon \varphi \to \varphi_{\new}$ is a projection.
  \end{lem}
  \begin{proof}
  This follows from 
  Lemma \ref{rem;P1;adic;formal;model}
    (\ref{enu:3:rem:P1:adic:formal:model}) and 
    Corollary \ref{enu:2:rem:P1:adic:formal:model}.
     \end{proof}

  \begin{prp}
  There exists $\varphi_{\new} \in I_{X \times \A^1}$
  with a morphism $\psi \colon \varphi \to \varphi_{\new}$ in $I_{X \times \A^1}$, 
  such that 
  $$\overline{\psi^* (\beta)} =0\in 
  H^{r,1}_{\Trop} ( \Trop (\varphi_{\new} ( (\pi^{\circ})^{-1} ([v_0])) )).$$
  \end{prp}
  \begin{proof}
  This follows from Lemma \ref{tropicalization A1 as retraction}
  and  Lemma \ref{R1Fp=0 induction 2 local}
  by induction on $\mpd (v_{\xi_1})$ in the increasing order.
  \end{proof}
  
  Consequently, we have 
  \begin{cor}\label{R1 pi_* =0}
  $R^1 \pi^{\circ}_* \F_{(X \times \A^1)^{\circ},v_0}^r=0 $ 
  \end{cor}

\section{The existence of corestriction maps}\label{section finite fields}
  In this section, we show the existence of corestriction maps (Proposition \ref{corestriction}), which is used to prove the main theorem (Theorem \ref{thm,trop,main}) over finite fields.
  Let $L/K$ be an extension of trivially valued finite fields,
  and $X$ be a smooth irreducible algebraic variety over $K$.
  We put $\pi \colon X_L \to X$ the base change.
  
  Since $R^i \pi_* \F^p_{X_L} =0 $ for $i\geq 1$,
  Proposition \ref{corestriction} follows from the following.
  \begin{prp}\label{corestriction of Fp}
  There is a morphism
  $$ \cores \colon \pi_* \F^{p}_{X_L} \to  \F^{p}_{X}$$
  of sheaves on $X^{\Ber}$
  such that 
  $\cores \circ \res = [L:K]$, 
  where 
  $ \res \colon \F^{p}_{X} \to \pi_* \F^{p}_{X_L}$
  is the natural morphism.
  \end{prp}
  
  Let $v \in X^{\Ber}$ be a valuation. 
  We put $x:= \supp v \in X$ and  $r:= \rank v$. 
  We fix elements
  $a_1,\dots, a_{ r} \in k(x)^{\times}$ 
  such that $v(a_1), \dots, v(a_r )$ form a basis of a $\Q$-vector space $\Q \cdot \Gamma_v$.
  We put 
  $$\cores_v \colon \bigoplus_{w \in \pi^{-1}(v)}
  \F^p_w \to \F^p_v$$
  the sum of morphisms
  $$  \F^p_w \cong
   \bigoplus_i \bigwedge^i \Q \langle a_j \rangle_j
   \otimes K_T^{p-i} (\kappa (w)/K)
   \to 
   \bigoplus_i \bigwedge^i \Q \langle a_j \rangle_j
   \otimes K_T^{p-i} (\kappa (v)/K)
   \cong \F^p_v$$
  given by identity maps on $\Q \langle a_j \rangle_j$ 
  and norm homomophisms $K_T^{p-i} (\kappa (w)/K) \to K_T^{p-i} (\kappa (v)/K)$,
  where the isomorphisms are given in Lemma \ref{stalk of F^p}.
  
  Since every section of $ \pi_* \F^{p}_{X_L}$
  is locally the image of an element of 
  $\pi_* \supp^* \bigwedge^p (\O_{X_L}^{\times})_\Q$, 
  Proposition \ref{corestriction of Fp} 
  follows from 
  by the following,
  where $\supp \colon X_L^{\Ber} \to X_L$ is the map of taking supports of valuations, and $\O_{X_L}^{\times}$ is the sheaf of invertible algebraic functions.

  \begin{prp}\label{commutativity of cores}
  We have a commutative diagram 
  \begin{align*}
  \xymatrix{
   \Im ( 
  \bigwedge^p (\O_{X,x} \otimes_K L)^{\times }_\Q \to 
  	\bigoplus_{ Y \subset X_L } K_T^p (K(Y)/K) ) 
  	\ar[r] \ar@{->>}[d] &
  \Im ( 
  \bigwedge^p (\O_{X,x} ^{\times })_\Q \to
  	K_T^p (K(X)/K) ) 
  	 \ar@{->>}[d] \\
  	\bigoplus_{ y \in \pi^{-1}(x) } K_T^p (k(y)/K)  
  	\ar[r] \ar@{->>}[d] &
  	K_T^p (k(x)/K)   \ar@{->>}[d] \\
  \bigoplus_{w \in \pi^{-1} (v)}
   \F^p_w
   \ar[r] &
   \F^p_v ,
  }
  \end{align*}
  where 
  $Y$ runs through all irreducible components of $X_L$, 
  the horizontal arrows are sums of norm homomophisms and $\cores_v$, 
  and the vertical arrows are 
  factorizations of natural maps 
  from $\bigwedge^p (\O_{X,x} ^{\times } \otimes_K L )_\Q $ and 
  $\bigwedge^p (\O_{X,x} ^{\times })_\Q $.
  \end{prp}
  
  The higher vertical arrows  in Proposition \ref{commutativity of cores} 
  are well-defined by Remark \ref{rem;val;vert;special;resi;fld;val} and the existence of extensions of valuations under extensions of fields.
  By \cite[Proposition 5.7]{Ker09} (compatibility of norm homomophisms of Milnor K-groups of semi-local rings), 
  the first horizontal arrow in Proposition \ref{commutativity of cores} 
  is well-defined. 
  Consequently, all morphisms in Proposition \ref{commutativity of cores} are well-defined.

  \begin{rem}\label{surjectivity in commutativity of cores}
  The surjectivity of the vertical arrows 
  except for the first lower one
  in Proposition \ref{commutativity of cores}
  is trivial. 
  The surjectivity of the first lower vertical arrow can be seen as follows.
  Let $y \in \pi^{-1}(x)$ be a point.
  By \cite[Chapter VI, Section 7.2, Theorem 1, Corollary 1]{Bou72}, 
  we have a surjection 
  $$ k(y)^{\times} \twoheadrightarrow  
  \bigoplus_{ 
  	\substack{w \in \pi^{-1}(v) 
  	          \\    \supp w =y }} 
  \Gamma_w.$$
  Hence we have a splitting 
  $$ k(y)^{\times}_\Q \cong \bigoplus_{ 
    \substack{w \in \pi^{-1}(v) 
              \\    \supp w =y }} 
  \Gamma_{w,\Q}
  \oplus \bigcap_{\substack{w \in \pi^{-1}(v) 
              \\    \supp w =y }}   \O_{w,\Q}^{\times} .$$
  Since $\bigcap_{ 
    \substack{w \in \pi^{-1}(v) 
        \\    \supp w =y }}   \O_w$ is a semi-local ring,
  we have a surjection 
  $$ \bigcap_{ 
    \substack{w \in \pi^{-1}(v) 
              \\    \supp w =y }}   \O_w^{\times} 
          \twoheadrightarrow 
          \bigoplus_{ 
    \substack{w \in \pi^{-1}(v) 
              \\    \supp w =y }}  \kappa (w)^{\times} . $$
  Hence the first lower vertical arrow in Proposition \ref{commutativity of cores}
   is surjective.
  \end{rem}
  
  In the rest of this section, we shall show the commutativity of the second squeare of the diagram in Proposition  \ref{commutativity of cores}.
  The commutativity of the first one follows similarly, and we omit it. 
  These complete proof of Proposition \ref{commutativity of cores} (and hence Proposition \ref{corestriction of Fp}).
  
  Recall that 
  for $w \in \pi^{-1}(v)$ and $y:= \supp w$,
  the morphism 
  $$ K_T^p (k(y)/K)   \to \F^p_{X_L,w} \cong 
  \bigoplus_i 
  \bigwedge^i \Q \langle a_j \rangle_j 
  \otimes
    K_T^{p-i} (\kappa (w) /K) 
    $$
  is given by  a decomposition
  $k(y)^{\times}_\Q \cong \Q \langle a_j \rangle_j \oplus (\O_{w}^{\times})_\Q$
  (Lemma \ref{stalk of F^p}).
  We also have 
  $\O_{w}^{\times} \cong \underrightarrow{\lim}_{X'} \O_{  X'_L, \Cent_{X'_L} w}^{\times} $, 
  where $X'$ runs through all proper algebraic varieties 
  whose function field is $k(x)$.
  The same statements hold for $x$ and $v$ instead of $y$ and $w$.
  Hence it suffices to show that 
  the restriction 
  $$\xymatrix{
    \bigoplus_{ y \in \pi^{-1}(x) } 
  K_T^p (k(y)/K)|_{X'}
    \ar[r] \ar[d] &
  K_T^p (k(x)/K)|_{X'}
      \ar[d] \\
  \bigoplus_{w \in \pi^{-1} (v)}
  \bigoplus_i 
  \bigwedge^i \Q \langle a_j \rangle_j 
  \otimes
    K_T^{p-i} (\kappa (w) /K)  
    \ar[r] &
  \bigoplus_i 
  \bigwedge^i \Q \langle a_j \rangle_j 
  \otimes
    K_T^{p-i} (\kappa(v)/K)  
  }$$
  of the second squeare of the diagram in Proposition  \ref{commutativity of cores}
  is commutative, 
  where 
  we put  $x':= \Cent_{X'}v$ the center of $v$, 
  by abuse of notation, let $\pi$ also denote the base change $\pi \colon X'_L \to X'$,
  \begin{align*}
  K_T^p (k(y)/K)|_{X'} := & \bigcap_{ \substack{ y' \in \pi^{-1}(x') \cap  \overline{ \{y \}} } }
    \Im \big( \bigwedge^p 
    (\Q \langle a_j \rangle_j \oplus (\O_{X'_L, y'}^{\times})_\Q ) \to K_T^p (k(y)/K)  
      \big) ,   \\
  K_T^p (k(x)/K)|_{X'} := &
  \Im \big( \bigwedge^p  
      (\Q \langle a_j \rangle_j \oplus
    (\O_{X', x'}^{\times})_\Q ) 
    \to 
    K_T^p (k(x)/K  ) \big) .
  \end{align*}

  We may assume that there is
  an open neighborhood $U' \subset X'$ of $x'$, 
  a basis $a_j' \in \O (U') $ 
  of $\Q \langle a_j \rangle_{j=1}^r $,
  and 
  an $i$-codimensional point $x'_i \in U'$ ($ 0 \leq i \leq r$)
  such that 
  $$\overline{ \{x'_0\} } \supset 
  \overline{\{ x'_1 \} } \supset
  \dots \supset 
  \overline{ \{ x'_r\} } \ni x'$$
  and 
  $$
  \partial_{x'_{r -1}}^{x'_{r}}
  ( \dots (\partial_{x'_0}^{x'_1}
  (a'_1 \wedge \dots \wedge a'_{r})) \dots ) 
  \neq 0 
  \in K_T^0 (k(x'_{r})/K) \cong \Q.$$
  (Recall that $
  \partial_{x'_{i -1}}^{x'_{i}} 
  $ is given by residue homomorphisms (Subsection \ref{Subsection tropical K group cycle module}).)

  It suffices to prove the following two Lemmas.
  
  \begin{lem}\label{compatibility of norm hom and certain specializations}
  We have a commutative diagram
  \begin{align}
  \xymatrix{
    \bigoplus_{ y \in \pi^{-1}(x) } K_T^p (k(y)/K)  |_{X'}
    \ar[r] \ar[d] &
    K_T^p (k(x)/K) |_{X'}   \ar[d] \\
    \bigoplus_{ y'_r \in \pi^{-1}(x_r') } 
  \bigoplus_i 
  \bigwedge^i \Q \langle a_j \rangle_j 
  \otimes
    K_T^{p-i} (k(y'_r)/K)  
    \ar[r]  &
  \bigoplus_i 
  \bigwedge^i \Q \langle a_j \rangle_j 
  \otimes
    K_T^{p-i} (k(x'_r)/K)   ,
  }\label{diagram for cores 2}
  \end{align}
  where  the vertical arrows are the natural morphisms, and
  the horizontal arrows are given by identity maps on $\Q \langle a_j \rangle_j$ and norm homomophisms.
  \end{lem}
  \begin{proof}
  For $i$ and $J=\{j_1 , \dots, j_{i } \} \subset [r]:=\{1,\dots,r\}$, 
  we put 
  $$K_T^p(k(x)/K) \to
   \Q \langle a'_{j_1} \wedge \dots \wedge a'_{j_{ i }} \rangle
   \otimes 
  K_T^{p-i} (k(x'_r)/K)$$
  the morphism given by 
  $$f \mapsto 
  \epsilon_J
  a'_{j_1} \wedge \dots \wedge a'_{j_{ i }} \otimes
  \frac{
  \partial_{x'_{r-1}}^{x'_{r}}
  ( \dots (\partial_{x'_{0}}^{x'_{1}}
  ( \bigwedge_{i \in [r] \setminus J} a'_i \wedge f)) \dots )
  }{  
  \partial_{x'_{r -1}}^{x'_{r}}
  ( \dots (\partial_{x'_0}^{x'_1}
  (a'_1 \wedge \dots \wedge a'_{r})) \dots ) 
  }
  $$
  (suitable $\epsilon_J \in \{ \pm 1 \}$).
  The second vertical arrow  
   in (\ref{diagram for cores 2})
   is the restriction of the morphism
  $$K_T^p (k(x)/K) \to \bigoplus_i 
  \bigwedge^i \Q \langle a_j \rangle_j
   \otimes	K_T^{p-i} (k(x'_r)/K) $$
   defined as the sum of the above morphisms
   by identifying 
  \begin{align*}
  \bigwedge^i \Q \langle a_j \rangle_j
   \otimes K_T^{p-i} (k(x'_r)/K) 
   \cong &
  \bigwedge^i \Q \langle a'_j \rangle_j
   \otimes K_T^{p-i} (k(x'_r)/K) \\
   \cong  &
   \bigoplus_{  J=\{ j_1 , \dots, j_r \} \subset [r] }
   \Q \langle a'_{j_1} \wedge \dots \wedge a'_{j_{ i }} \rangle
   \otimes K_T^{p-i} (k(x'_r)/K) .
  \end{align*}

  Similarly, the first vertical arrow
  in (\ref{diagram for cores 2})
  is the restriction of the morphism
  $$\bigoplus_{ y \in \pi^{-1}(x) } K_T^p (k(y)/K)  
  	\to 
  	\bigoplus_{ y'_r \in \pi^{-1}(x_r') } 
  \bigoplus_i 
  \bigwedge^i \Q \langle a_j \rangle_j 
  \otimes
  	K_T^{p-i} (k(y'_r)/K)  $$
  defined as the sum of morphisms
  $$\bigoplus_{ y \in \pi^{-1}(x) } K_T^p (k(y)/K)  
  	\to 
   \Q \langle a'_{j_1} \wedge \dots \wedge a'_{j_{ i }} \rangle
  \otimes
  	K_T^{p-i} (k(y'_r)/K)  $$
  ($i$, $J=\{j_1 , \dots, j_{i } \} \subset [r]$, and $y'_r \in \pi^{-1}(x_r')$)
  given by 
  $$f =( f_y)_{y \in \pi^{-1}(x)} \mapsto 
  \epsilon_J
  a'_{j_1} \wedge \dots \wedge a'_{j_{ i }} 
  \otimes
  \sum_{ \substack{ (y'_0, \dots,y'_{r-1}) \\  \overline{y'_{r-1}} \ni y'_r} }
  \frac{
  \partial_{y'_{r-1}}^{y'_{r}}
  ( \dots (\partial_{y'_{0}}^{y'_{1}}
  ( \bigwedge_{i \in [r] \setminus J} a'_i \wedge f_{y'_0})) \dots )
  }{  
  \partial_{y'_{r -1}}^{y'_{r}}
  ( \dots (\partial_{y'_0}^{y'_1}
  (a'_1 \wedge \dots \wedge a'_{r})) \dots ) 
  },
  $$
  where 
  $(y'_0,\dots, y'_{r-1})$
  runs through $y'_i \in \pi^{-1}(x'_i)$ ($0\leq i \leq r-1$)
  such that 
  $$\overline{ \{y'_0 \} } \supset 
  \overline{\{ y'_1 \} } \supset
  \dots \supset 
  \overline{ \{ y'_{r-1} \} } \ni  y'_r.$$

  By \cite[Proposition 4.6 (2)]{Ros96},
  we have 
  \begin{align*}
  \partial_{x'_{r -1}}^{x'_{r}}
  ( \dots (\partial_{x'_0}^{x'_1}
  (a'_1 \wedge \dots \wedge a'_{r})) \dots ) 
  = 
  \partial_{y'_{r -1}}^{y'_{r}}
  ( \dots (\partial_{y'_0}^{y'_1}
  (a'_1 \wedge \dots \wedge a'_{r})) \dots ). 
  \end{align*}
  Consequently, 
  by compatibility of norm homomophisms and residue homomophisms
  (\cite[Proposition 4.6 (1)]{Ros96}),
  diagram (\ref{diagram for cores 2}) 
  is commutative.
  \end{proof}
  
  We put 
  \begin{align*}
  K_T^{p-i} (k(y'_r)/K)|_{\O_{X'_L, \pi^{-1}(x') }^{\times}} := & \bigcap_{ \substack{ y' \in \pi^{-1}(x') \cap  \overline{ \{y'_r \}} } }
    \Im \big( \bigwedge^{p-i} 
    ( (\O_{X'_L, y'}^{\times})_\Q ) \to K_T^{p-i} (k(y'_r)/K)  
      \big) ,   \\
  K_T^{p-i} (k(x'_r)/K)|_{\O_{X', x'}^{\times}} := &
  \Im \big( \bigwedge^{p-i}  
      ( (\O_{X', x'}^{\times})_\Q ) 
    \to 
    K_T^{p-i} (k(x'_r)/K  ) \big) .
  \end{align*}

  \begin{lem}
  We have a natural commutative diagram 
  \begin{align*}
  \xymatrix{
    \bigoplus_{ y'_r \in \pi^{-1}(x_r') } 
    K_T^{p-i} (k(y'_r)/K)  |_{\O_{X'_L, \pi^{-1}(x')}^{\times}}
    \ar[r] \ar[d] &
    K_T^{p-i} (k(x'_r)/K) |_{\O_{X', x'}^{\times}}  \ar[d] \\
  \bigoplus_{w \in \pi^{-1} (v)}
    K_T^{p-i} (\kappa (w) /K)  
    \ar[r] &
    K_T^{p-i} (\kappa(v)/K)  ,
  }
  \end{align*}
  where  
  the horizontal arrows are the sum of norm homomophisms.
  \end{lem}
  \begin{proof}
  Similar to Lemma \ref{compatibility of norm hom and certain specializations}, 
  we have a commutative diagram
  \begin{align*}
  \xymatrix{
    \bigoplus_{ y'_r \in \pi^{-1}(x_r') } 
    K_T^{p-i} (k(y'_r)/K)  |_{\O_{X'_L,  \pi^{-1}(x')}^{\times}}
    \ar[r] \ar[d] &
    K_T^{p-i} (k(x'_r)/K) |_{\O_{X', x'}^{\times}}  \ar[d] \\
  \bigoplus_{y' \in \pi^{-1} (x')}
    K_T^{p-i} (\kappa (y') /K)  
    \ar[r] &
    K_T^{p-i} (\kappa(x')/K)  .
  }
  \end{align*}
  The assertion follows from this diagram and the compatibility of norm homomophisms and extensions of fields.
  \end{proof}

\section{``Monodromy weight'' spectral sequences for geometric strictly semi-stable reductions}\label{sec ssreduction}
  In this section, we shall give spectral sequences (Corollary \ref{spectral sequence for ss reduction}) for geometric strictly semi-stable reductions. 
  For this, we also introduce log tropical cohomology (Subsection \ref{subsec log tropical cohomology}).
  Our theory is parallel to monodromy weight spectral sequences for singular cohomology of degenerations of complex algebaric varieties, see e.g., \cite[Chapter 4 and 11]{PS08}.
   
 \subsection{Log tropical cohomology}\label{subsec log tropical cohomology}
  In this subsection, we introduce and study log tropical cohomology in the trivially valued case.
  Let $X$ be a smooth algebraic variety over a trivially valued field $K$, 
  and
  $D=\bigcup_{i\in I} D_i$ a simple normal crossing divisor in $X$.
  We put $U := X\setminus D$ the complement and $i\colon U \hookrightarrow X$ the inclusion.
  
  First, we shall study cohomology of sheaves of log tropical Milnor $K$-groups.
  We have a distinguished triangle 
  $$ R \Gamma_D (\K^p_{T,X}) \to \K^p_{T,X} \to Ri_* \K^p_{T,U} \to^{[1]} \cdot $$
  in the derived category of sheaves  
  on $X_{\Zar}$.
  In particular, we have 
  \begin{align*}
  	R^0 \Gamma_D (\K^p_{T,X}) & = 0, \\
  	R^1 \Gamma_D (\K^p_{T,X}) & \cong i_* \K^p_{T,U} /  \K^p_{T,X},  \\
  	R^i \Gamma_D (\K^p_{T,X}) & \cong R^{i-1} i_* \K^p_{T,U}  \quad (i \geq 2).
  \end{align*}
  
  The following is same as the case of Milnor $K$-groups (\cite[Lemma 2.1]{RS18}).
  \begin{lem}\label{vanish of R^i Gamma_D (K^p_T,X)}
     $R^i \Gamma_D (\K^p_{T,X})=0$ for $i \geq 2$.
  \end{lem}
  \begin{proof}
  	When $\# I=1$, i.e., $D $ is irreducible, 
  	we have 
  	$$R \Gamma_{D} (\K^p_{T,X}) \cong
        \Gamma_{D} C_{X}^{p,*} \cong 
  	  j_* C_{D}^{p-1,* }[-1],$$
  	where $C_X^{p,*} $ is the Gersten resolution of $\K_{T,X}^p$ and $j\colon D \hookrightarrow X$ is the inclusion.
    We assume that $\# I \geq 2$. 
  	We fix $r \in I$. We put $D':= \bigcup_{i \in I \setminus \{r\}} D_i .$ 
  	We have a distinguished triangle 
  	$$R \Gamma_{D'} (\K^p_{T,X}) \to 
  	R \Gamma_{D} (\K^p_{T,X})  \to 
  	R \Gamma_{D\setminus D'} (\K^p_{T,X})  \to^{[1]} \cdot. $$
  	We put 
  		$$ D_r \setminus D' \to^{j_r}  D_r \to^{i_r}  X .$$
  	Then we have 
  	\begin{align*}
  		R \Gamma_{D\setminus D'} (\K^p_{T,X}) 
  	= &	R \Gamma_{D_r \setminus D'} (\K^p_{T,X}) \\
  	\cong &	R i_{r,*} R j_{r,*} j_r^{*} i_r^{!} (\K^p_{T,X}) \\
  	\cong &	R i_{r,*} R j_{r,*}  (\K^{p-1}_{T,D_r \setminus D' })[-1]. 
  	\end{align*}
  	Hence the assertion follows by induction on $\# I$.
  \end{proof}
  
  \begin{cor}\label{log trop Mil K coh = trop Mil K}
  	$$H^q(U_{\Zar}, \K_{T,U}^p) \cong H^q (X_{\Zar}, i_* \K_{T,U}^p).$$
  \end{cor}
  We put $\K_{T,X}^p (\log D):= i_* \K_{T,U}^p$, the sheaf of $p$-th \emph{log tropical Milnor $K$-groups}.
  
  Let $J\subset I$ be a subset such that $\bigcap_{j\in J} D_j \neq \emptyset$.
  We fix an order $J= \{j_1,\dots,j_{\# J}\} $.
  We have a composition of morphisms 
  \begin{align*}
   i_*  \K_{T,U}^p  &
  \to R^1 \Gamma_D (\K^p_{T,X}) 
  \to R^1 \Gamma_{D_{j_1} \setminus  (\bigcup_{i \neq j_1 }D_i )  } (\K^p_{T,X}) 
  \to 
  h_{j_1,* } \K^{p-1}_{T, D_{j_1} \setminus (\bigcup_{i \neq j_1 }D_i )   } \\
  & \to 
  h_{\{j_1,j_2\} ,* } \K^{p-2}_{T, (D_{j_1} \cap D_{j_2}) \setminus  (\bigcup_{i \in I \setminus \{ j_1,j_2\} }D_i )   } 
  \to \dots \to 
  h_{J ,* } \K^{p- \# J}_{T, \bigcap_{j \in J} D_j  \setminus  (\bigcup_{i \in I \setminus J }D_i )} ,
  \end{align*}
  where 
  $$h_{\{j_1, \dots, j_s\} } \colon  
  \bigg(\bigcap_{1\leq i \leq s} D_{j_i} \bigg) 
  \setminus 
   \bigg(\bigcup_{i \in I \setminus  \{ j_1 , \dots, j_s \} }D_i \bigg) 
  \to X$$ 
  ($1 \leq s \leq  \# J$)
  is the inclusion.
  We put 
  \begin{align*}
   W_r \K_{T,X}^p (\log D)
  :=  \Ker \bigg( 
  \K_{T,X}^p (\log D)  
  \to 
  \bigoplus_{\substack{J \subset I \\ \# J= r+1}}
  h_{J ,* } \K^{p-r-1}_{T, \bigcap_{j \in J} D_j  \setminus  (\bigcup_{i \in I \setminus J }D_i )} ,
  \bigg).
  \end{align*}
  This is an increasing filtration, called the \emph{weight filtration}.
  For example, 
  $W_0 \K_{T,X}^p (\log D) = 
   \K_{T,X}^p$.
  We put 
  $$ 
   \Gr^W_r \K_{T,X}^p (\log D)
  := W_r \K_{T,X}^p (\log D) / W_{r-1} \K_{T,X}^p (\log D).$$
  
  For each $J\subset I $, we put $D_J := \bigcap_{j \in J} D_j$ and $j_J \colon D_J \hookrightarrow X$.
  
  \begin{lem}\label{gr W K (log D)}
  	For $r \geq 0$, 
  	we have 
   $$\Gr^W_r \K_{T,X}^p (\log D)
  \cong \bigoplus_{\substack{J \subset I \\ \# J= r}}
  j_{J ,* } \K_{T, D_J  }^{p-r} .$$
  \end{lem}
  \begin{proof}
  Obviously, we have an injection 
  $$\Gr^W_r \K_{T,X}^p (\log D)
  \hookrightarrow \bigoplus_{\substack{J \subset I \\ \# J= r}}
  j_{J ,* } \K_{T, D_J  }^{p-r} .$$
  We shall show surjectivity.
  Let $x \in X$, and $I_x := \{ i \in I \mid x \in D_i \}$.
  Let 
  $$(a_J)_J \in 
  \bigg(\bigoplus_{\substack{J \subset I \\ \# J= r}}
  j_{J ,* } \K_{T, D_J  }^{p-r} \bigg)_x
  = 
  \bigg(\bigoplus_{\substack{J \subset I_x \\ \# J= r}}
  j_{J ,* } \K_{T, D_J  }^{p-r} \bigg)_x $$
  be an element.
  For $L \subset I_x$ with $D_L \neq \emptyset$, 
  we put 
  $\eta_{D_{L}, x}$ the generic point of the irreducible component of $D_{L}$ containing $x$. 
  For each $J \subset I_x$ with $\# J =r$, 
  let $C_{D_{J \setminus \{j_r \}}}^{p-r+1,*}$ be 
  the Gersten resolution (Corollary \ref{corshftrK}) of $\K_{T, D_{J \setminus \{j_r \}}  }^{p-r+1}$.
  Then there exists 
  $$a_{J,J\setminus \{ j_r \}} \in
   K_T^{p-r+1}  (k(\eta_{D_{J \setminus \{j_r \}}, x})/K)
   = C_{D_{J \setminus \{j_r \}} , x}^{p-r+1,0}$$
  such that 
  $$d a_{J,J\setminus \{ j_r \}} = a_J \in C_{D_{J \setminus \{j_r \}} ,x }^{p-r+1,1},$$
  where 
  we consider $a_J $ as an element in $ C_{D_{J \setminus \{j_r \}} ,x }^{p-r+1,1}$
  via natural inclusions 
  $$ (j_{J ,* } \K_{T, D_J  }^{p-r})_x 
     \subset K_T^{p-r}  (k(\eta_{D_{J \setminus \{j_r \}}, x})/K)
     \subset
     C_{D_{J \setminus \{j_r \}} ,x }^{p-r+1,1}.$$

  Similarly, 
  since 
  $$a_{J, J\setminus \{ j_r \}}
   \in h_{J,r-1,*} C_{D_{J \setminus \{j_{r-1}, j_r \}}  \setminus D_{j_r}, x}^{p-r+2,1}$$
   satisfies 
  $$d a_{J, J\setminus \{ j_r \}}=0
   \in h_{J,r-1,*} C_{D_{J \setminus \{j_{r-1}, j_r \}}  \setminus D_{j_r}, x}^{p-r+2,2}, $$
  where we put 
  $$ h_{J,r-1} \colon D_{J \setminus \{j_{r-1}, j_r \}}  \setminus D_{j_r} 
  \hookrightarrow D_{J \setminus \{j_{r-1}, j_r \}} ,$$
  by the log-version of the Gersten resolution (Lemma \ref{vanish of R^i Gamma_D (K^p_T,X)}),
  there exists 
  $$a_{J, J\setminus \{j_{r-1}, j_r \}}
  \in
   K_T^{p-r+2}  (k(\eta_{D_{J \setminus \{j_{r-1}, j_r \}}, x})/K)
   =h_{J,r-1,*} C_{D_{J \setminus \{j_{r-1}, j_r \}}  \setminus D_{j_r}, x}^{p-r+2,0}$$
  such that 
  $$d a_{J, J\setminus \{ j_{r-1},j_r \}} = a_{J, J\setminus \{ j_r \}} 
   \in h_{J,r-1,*} C_{D_{J \setminus \{j_{r-1}, j_r \}}  \setminus D_{j_r}, x}^{p-r+2,1}.$$
  By repeating this argument, 
  we get $a_{J, \emptyset} \in \K_{T, X ,x }^p $
  such that 
  $$ W_r \K_{T,X}^p (\log D) \ni \sum_{\substack{J \subset I_x  \\ \# J =r}} 
  a_{J,\emptyset }  
  \mapsto 
  (a_J)_J \in 
  (\bigoplus_{\substack{J \subset I_x \\ \# J= r}}
  j_{J ,* } \K_{T, D_J  }^{p-r} )_x   .
  $$
  \end{proof}

  \begin{cor}\label{weight spectral sequence for smooth open varieties}
  	For $r \geq 0$, 
  	the $E_1$-terms of the spectral sequence 
  	$$E_1^{p,q}= H^{p+q} ( \Gr^W_{-p} \K_{T,X}^r (\log D) )  \Rightarrow H^{r,p+q}_{\Trop } (U)$$
      induced by the weight filtration are isomorphic to
  	$$ \bigoplus_{\substack{J \subset I \\ \# J= -p}}
          H^{r+p, p+q } ( D_J ) .$$
  \end{cor}

  Next, we shall study log tropical cohomology.
   We put $\F^p_X (\log D)$ the subsheaf of $i_* \F^p_U$ on $X^{\Ber}$ whose stalk at each point $x\in X^{\Ber}$ is the image of  
  $ \bigwedge^p (\O_{X, \supp (x)}[f_i^{-1}]_{i \in I_x})^{\times }_\Q$ under the natural morphism,
  where 
  $f_i \in \O_{X,\supp (x)}$ is a local equation of $D_i$, 
   and $I_x := \{ i \in I  \mid x \in D_i^{\Ber} \}$.
   We call $ H^q (X^{\Ber}, \F^p_X (\log D)) $ \emph{log tropical cohomology} of $(X,D)$.
  By definition, 
  we have 
  \begin{align}
  (\F^p_X (\log D))_x 
  \cong \bigoplus_{ i=0}^p 
  \bigwedge^i \Q \langle f_i \rangle_{i \in I_x}  
  \otimes  \F^{p-i}_x  \label{log Fp local description} .
  \end{align}

  Let $J:= \{j_1,\dots,j_{\# J}\} \subset I$ be a subset such that $\bigcap_{j\in J} D_j \neq \emptyset$.
  We put $D_J \cap D := D_J \cap \big(\bigcup_{i \in I \setminus J} D_i \big)$ a simple normal crossing divisor in $D_J$.
  By (\ref{log Fp local description}), 
  for $ i_0 \in I$, 
  the residue homomorphism
  $$ \partial_\eta^{\eta_{D_i}} \colon K^p_T (K(X)/K) \to K^{p-1}_T (K(D_i)/K)$$
  (where $\eta \in X$ and $\eta_{D_i} \in D_i$ are the generic point)
  induces a morphism 
  $$\F^p_X (\log D) \to j_{\{i_0\},*} \F^{p-1}_{D_{i_0}} (\log (D_{i_0} \cap  D ) ) .$$
  We get a composition of morphisms 
  \begin{align*}
  	\F^p_X (\log D)&  \to j_{\{j_1\},*} \F^{p-1}_{D_{j_1}} (\log (D_{j_1} \cap D ) )  
  	\\
  & \to 
  j_{\{j_1,j_2\},*} \F^{p-1}_{D_{\{j_1,j_2\}}} (\log (D_{\{j_1,j_2\}} \cap D ) )
  \\ 
  & \to \dots \to 
  j_{J,*} \F^{p-r}_{D_J} (\log (D_J \cap D ) ).
  \end{align*}
  We put 
  \begin{align*}
   W_r \F^p_X (\log D) 
  :=  \Ker \bigg( 
  \F^p_X (\log D)  
  \to 
  \bigoplus_{\substack{J \subset I \\ \# J= r+1}}
  j_{J,*} \F^{p-r-1}_{D_J} (\log (D_J \cap D ) ),
  \bigg).
  \end{align*}
  This is an increasing filtration, called the \emph{weight filtration}.
  For example, 
  $W_0 \F^p_X (\log D) = 
   \F^p_X  $.
  We put 
  $$ 
   \Gr^W_r \F^p_X (\log D) 
  := W_r \F^p_X (\log D)  / W_{r-1} \F^p_X (\log D) .$$

  \begin{lem}\label{gr W F (log D)}
  	For $r \geq 0$, 
  	we have 
   $$\Gr^W_r \F^p_X (\log D) 
  \cong \bigoplus_{\substack{J \subset I \\ \# J= r}}
  j_{J ,* } \F_{D_J  }^{p-r} .$$
  \end{lem}
  \begin{proof}  
  	  We obviously have an injection from the left-hand side to the right-hand side.
  	  We shall show surjectivity. 
  Let 
  $$(a_J)_J \in 
  \bigg(\bigoplus_{\substack{J \subset I \\ \# J= r}}
  j_{J ,* } \F_{D_J  }^{p-r} \bigg)_x
  = 
  \bigg(\bigoplus_{\substack{J \subset I_x \\ \# J= r}}
  j_{J ,* } \F_{ D_J  }^{p-r} \bigg)_x $$
  be an element.
  For each $J \subset I_x$ with $\# J =r$, 
  the element $a_J$ is the image of some $\tilde{a}_J \in \bigwedge^{p-r} (\O_{X,\supp (x)}^{\times})_\Q $.
  Then 
  $$\sum_{\substack{J \subset I_x \\ \# J= r}}
  f_{j_1} \wedge \dots \wedge f_{j_r} \wedge \tilde{a}_J
  \in (W_r \F^p_X (\log D))_x$$
  maps to $(a_J)_J$.
  \end{proof}

  \begin{cor}\label{log Fp coh = log trop Mil K}
  	We have a natural isomorphism
      $$ H^q (X^{\Ber}, \F^p_X (\log D)) \cong  H^q (X_{\Zar}, \K_{T,X}^p (\log D)).$$
  \end{cor}
  \begin{proof}
  By Theorem \ref{thm,trop,main} (i.e., the case of $D = \emptyset $), 
 	Lemma \ref{gr W K (log D)}, and Lemma \ref{gr W F (log D)}, 
 	we have 
 	$R^i \pi_* \F^p_X (\log D) =0 $ for $i \geq 1$, 
 	and  a natural morphism $ \pi_*  \F^p_X (\log D) \to \K_{T,X}^p (\log D) $
  compatible with the weight filtrations is an isomorphism,
 	where $\pi \colon X^{\Ber} \to X$ is the map taking supports.
  \end{proof}

  By Theorem \ref{thm,trop,main},
   Corollary \ref{log trop Mil K coh = trop Mil K}, and 
  Corollary \ref{log Fp coh = log trop Mil K}, 
  we have the following
  \begin{cor}\label{coh log Fp = coh FpU}
  $$ H^{q} ( U^{\Ber}, \F^p_U ) 
   \cong H^{q} ( X^{\Ber}, \F^p_X (\log D) ).$$
  \end{cor}
  
 \subsection{``Monodromy weight'' spectral sequences for geometric strictly semi-stable reductions}
  In this subsection, we shall give ``monodromy weight'' spectral sequences for geometric strictly semi-stable reductions (Corollary \ref{spectral sequence for ss reduction}). 
  
  Let 
  $\pi \colon X \to C$ be a flat, generically smooth, projective morphism from a smooth algebraic variety $X$ to a smooth algebraic curve $C$ over a trivially valued field $K$.
  Let $c \in C$ be a closed point.
  We assume that $X_c:= \pi^{-1}(c)$ is a simple normal crossing divisor.
  We put $\hat{K}_c$ the fraction field of the formal completion $\hat{\O_{C,c}}$ of the local ring $\O_{C,c}$,
  and 
  $X_{\hat{K}_c} := X \times_C \Spec \hat{K}_c$
  the base change.
  The field $\hat{K}_c$ is equipped with a natural normalized discrete valuation $\hat{K}_c^{\times} \twoheadrightarrow \Z$ (taking orders of zeros and poles at $c$).
  We will give a spectral sequence converging to 
  $H_{\Trop}^{p,q} (X_{\hat{K}_c})$. 
  
  We shall show that $H_{\Trop}^{p,q} (X_{\hat{K}_c})$ is isomorphic to cohomology of a relative log version of $\F^p$ on $X_c^{\Ber}$ (Proposition \ref{ss reduction trop coh = log coh}).
  We may assume that $c \in C$ is the zero set of some $t_c \in \O(C)$.
  We put 
  $X^{\Ber}_{0 \leq t_c < 1}$  (resp. $X^{\Ber}_{0 < t_c < 1}$)
  the subset of $X^{\Ber}$ consisting of 
  valuations $v$ with $ 0 < v(t_c) \leq \infty$ (resp. $ 0 < v(t_c) < \infty$).
  Note that for $0 < R < \infty$,
  there is a unique point $v_{c,R} \in C^{\Ber}$ such that $v(t_c) =R$,
  and 
  we have 
  a natural homeomorphism $X_{\hat{K}_c}^{\Ber} \cong \pi^{-1}( v_{c,R} )$.
  We put $U:= X \setminus X_c$.
  Then by Mayer-Vietrious for $ X^{\Ber} =U^{\Ber} \cup X^{\Ber}_{0 \leq t_c <1}$ and $\F^p_X (\log X_c)$, 
  we have 
  \begin{align*}
  \dots \to H^{q} ( X^{\Ber}_{0 < t_c < 1}, \F^p_X ) 
  & \to H^{q} ( X^{\Ber}_{0 \leq t_c < 1}, \F^p_X (\log X_c ))
  \oplus H^{q} ( U^{\Ber}, \F^p_U ) \\
  & \to H^{q} ( X^{\Ber}, \F^p_X (\log X_c) ) 
  \to H^{q+1} ( X^{\Ber}_{0 < t_c < 1}, \F^p_X ) \to \dots .
  \end{align*}
  Hence by Corollary \ref{coh log Fp = coh FpU},
  we have 
  \begin{align}
   H^{q} ( X^{\Ber}_{0 < t_c < 1}, \F^p_X ) 
   \cong H^{q} ( X^{\Ber}_{0 \leq t_c < 1}, \F^p_X (\log X_c )). \label{eq tro coh relative log1}
  \end{align}
  
  We put 
  $$ \F^1_C (\log c) \wedge \F^{p-1}_X (\log X_c)$$
  the subsheaf of $\F^p_X (\log X_c)$
  generated by the image of 
  $ \pi^{*} \F^1_C (\log c) \otimes \F^{p-1}_X (\log X_c)$.
  
  \begin{lem}\label{Fp log computation}
  	For $p \geq 0$, there is an exact sequenece 
  	$$0 \to \F^1_C (\log c) \wedge \F^{p-1}_X (\log X_c)
  	   \to   \F^p_X (\log X_c)
  	   \to \F^1_C (\log c) \wedge \F^{p}_X (\log X_c) 
  	   \to 0$$
  	on $X^{\Ber}_{0 \leq t_c < 1}$, 
  	where the third morphism is given by $ t_c \wedge -$,
    and 
  	we put $\F^1_C (\log c) \wedge \F^{-1}_X (\log X_c) := 0 $.
  \end{lem}
  \begin{proof}
      The morphism $\pi \colon X^{\Ber}_{0 < t_c <1} \to C^{\Ber}$  induces a homeomorphism 
  	$X^{\Ber}_{0 < t_c <1} \cong \pi^{-1}(v_{c,R}) \times (0,1)$ ($ 0 < R < \infty$).
  	Hence the exactness of the sequence on $X^{\Ber}_{0 < t_c <1} $ holds.
  	The exactness on  $X^{\Ber}_{c}$ follows from (\ref{log Fp local description}).
  \end{proof}
  
  As a corollary of (\ref{eq tro coh relative log1}) 
  and Lemma \ref{Fp log computation},
  by induction, we have 
  \begin{align}
   H^{q} ( X^{\Ber}_{0 < t_c < 1}, \F^1_C \wedge \F^p_{X} ) 
   \cong H^{q} ( X^{\Ber}_{0 \leq t_c < 1}, \F^1_C (\log c) \wedge \F^{p}_X (\log X_c) ). \label{eq tro coh relative log c wedge log2}
  \end{align}
  We put 
   $ \F^p_{X/C} :=  \F^p_{X}/  \F^1_C \wedge \F^{p-1}_{X}$,
   a sheaf on $X^{\Ber}_{0 < t_c < 1}$,
   and  put 
   $$\F^p_{X/C} (\log X_c ) := \F^p_X (\log X_c) / \F^1_C (\log c) \wedge \F^{p-1}_X (\log X_c),$$
   a sheaf on $X^{\Ber}_{0 \leq t_c < 1}$.
   Recall that we have a homeomorphism
  $$X^{\Ber}_{0 < t_c <1} \cong 
  \pi^{-1}(v_{c,R}) \times (0,1) $$
  ($ 0 < R < \infty$).
  \begin{lem}\label{tropical charts defined over dense fields are enough}
  By an identification $ \pi^{-1}(v_{c,R})
  \cong  X_{\hat{K}_c}^{\Ber} $
  ($ 0 < R < \infty$), 
  we have
  $$ \F^p_{X/C}|_{\pi^{-1}(v_{c,R})} \cong \F^p_{X_{\hat{K}_c}}.$$
  \end{lem}
  \begin{proof}
  It is enough to show that every section of 
  $ \F^p_{X_{\hat{K}_c}}$ 
  is given by tropical charts of $X_{\hat{K}_c}$ defined over $K(C)$.
  Over a non-trivially valued field $\hat{K}_c$, 
  invertible algebraic functions (instead of general algebraic functions) over $\hat{K}_c$  give sufficiently many  
  tropical charts 
  for $ \F^p_{X_{\hat{K}_c}}$ (\cite{Gub16}, \cite{Jel16}).
  However, for each point $x \in X_{\hat{K}_c}^{\Ber}$ and an algebraic function $f $ invertible at $x$, 
  by ultrametric inequality, 
  there exists an algebraic function $g$ defined over $K(C)$ 
  such that $\Trop \circ f = \Trop \circ g$ on an open neighborhood of $x$.
  Hence the assertion holds.
  \end{proof}

  By (\ref{eq tro coh relative log1}), 
  (\ref{eq tro coh relative log c wedge log2}), and
  Lemma \ref{tropical charts defined over dense fields are enough},
  we have 
  $$ H^{q} ( X^{\Ber}_{\hat{K}_c}, \F^p_{X_{\hat{K}_c}}) 
   \cong H^{q} ( X^{\Ber}_{0 \leq t_c < 1},\F^p_{X/C} (\log X_c ) ). $$
  Similarly to $\F^p$ and $\C_T^{p,*}$, we have a c-soft resolution $\C^{p,*}_{T,X/C} (\log X_c)$ of $\F^p_{X/C} (\log X_c ) $ 
  which are given by tropical charts (given by closed immersions of $X$ to toric varieties) and cochains with ``$\F^p_{X/C} (\log X_c ) $''-coefficients.
  By using retractions of tropicalizations (like Lemma \ref{lem;X;circ;trocoho}), we have 
  $$H^{q} ( X^{\Ber}_{0 \leq t_c < 1},\F^p_{X/C} (\log X_c ) )
  \cong H^{q} ( X^{\Ber}_{c},\F^p_{X/C} (\log X_c )|_{X_c^{\Ber}} ).$$
  Consequently, we have the following.
  \begin{prp}\label{ss reduction trop coh = log coh}
  $$	H^{q} ( X^{\Ber}_{\hat{K}_c}, \F^p_{X_{\hat{K}_c}}) 
  \cong H^{q} ( X^{\Ber}_{c},\F^p_{X/C} (\log X_c )|_{X_c^{\Ber}} ).$$
  \end{prp}
  
  We shall construct a spectral sequence converging to $H^{q} ( X^{\Ber}_{c},\F^p_{X/C} (\log X_c )|_{X_c^{\Ber}} )$, the \emph{``monodromy weight'' spectral sequence}.
  We put 
  $$A^{p,*} := (\F^{p+*+1}_{X} (\log X_c )/ W_* \F^{p+*+1}_{X} (\log X_c )) |_{X_c^{\Ber}}$$
  ($* \geq 0$)
  and 
  $$d \colon A^{p,q } \ni \alpha \mapsto t_c \wedge \alpha \in A^{p,q+1}.$$
  By (\ref{log Fp local description}), 
  a morphism 
  $$\F^p_{X/C} (\log X_c ) |_{X_c^{\Ber}} \ni \alpha 
  \mapsto (-1)^p t_c \wedge \alpha 
   \in A^{p,0}$$
  induces a quasi-isomorphism 
  $$\F^p_{X/C} (\log X_c )|_{X_c^{\Ber}} \cong A^{p,*}.$$
  We put 
  $ W(M)_r A^{p,q} $ the image of $W_{r+2q+1} \F^{p+q+1}_{X} (\log X_c ) |_{X_c^{\Ber}}  $ to $A^{p,q}$.
  Then we have $d W(M)_r \subset W(M)_{r-1}$.
  We put $X_{c,i} $ ($i \in I$) the irreducible components of $X_c$, 
  and for $J \subset I$, we put $X_{c,J} :=\bigcap_{i \in J} X_{c,i}$.
  By Lemma \ref{gr W F (log D)},
  we have 
  \begin{align*}
   \Gr^{W(M)}_r A^{p,*} 
   \cong & 	
    \bigoplus_{\substack{ 0 \leq u \\ u+1 \leq r + 2u+1 } }
    \Gr^W_{r +2u +1} \F_X^{p+u+1} (\log X_c) |_{X^{\Ber}_c} [-u] \\
   \cong & 	
    \bigoplus_{ \max \{0, -r \} \leq u  }
    \bigoplus_{\substack{ J \subset I \\ \# J = r + 2u+1 } }
     j_{J,*} \F_{X_{c,J}}^{p-r-u}|_{X^{\Ber}_c}  [-u]. \\
  \end{align*}

  \begin{cor}\label{spectral sequence for ss reduction}
  	For $r \geq 0$, 
  	the $E_1$-terms of the spectral sequence 
  	$$E_1^{p,q}= H^{p+q} (X_c^{\Ber}, \Gr^{W(M)}_{-p} A^{r,*}  )  \Rightarrow H^{r,p+q}_{\Trop } (X_{\hat{K}_c}^{\Ber})$$
      induced by the ``monodromy weight'' filtration $(W(M)_s A^{r,*})_{s } $ are isomorphic to
    $$\bigoplus_{ \max \{0,p\} \leq u  }
    \bigoplus_{\substack{ J \subset I \\ \# J = -p + 2u+1 } }
  H^{r+p-u, p+q -u}_{\Trop} (X_{c,J}). $$
  \end{cor}
  
  The rest of this subsection is 
  devoted to show that the spectral sequence in Corollary \ref{spectral sequence for ss reduction}
  is compatible with
   the monodromy weight spectral sequence for singular cohomology when $K=\mathbb{C}$ (\cite[p.269 and Corollary 11.23]{PS08}).

   We shall give an expression of the spectral sequence in Corollary \ref{spectral sequence for ss reduction} using Zariski sheaves.
  We put 
  $$A_{K_T}^{p,*} := (\K_{T,X}^{p+*+1} (\log X_c )/ W_* \K_{T,X}^{p+*+1} (\log X_c )) |_{X_c}$$
  ($* \geq 0$)
  and 
  $$d \colon A_{K_T}^{p,q } \ni \alpha \mapsto t_c \wedge \alpha \in A_{K_T}^{p,q+1},$$
  a complex of Zariski sheaves on $X_c$.
  We put 
  $ W(M)_r A_{K_T}^{p,q} $ the image of the sheaf 
  $W_{r+2q+1} \K^{p+q+1}_{T,X} (\log X_c ) |_{X_c}  $ to $A_{K_T}^{p,q}$.
  Then we have $d W(M)_r \subset W(M)_{r-1}$.
  By Lemma \ref{gr W K (log D)}, 
  we have 
  \begin{align*}
   \Gr^{W(M)}_r A_{K_T}^{p,*} 
   \cong & 	
    \bigoplus_{\substack{ 0 \leq u \\ u+1 \leq r + 2u+1 } }
    \Gr^W_{r +2u +1} \K_{T,X}^{p+u+1} (\log X_c) |_{X_c} [-u] \\
   \cong & 	
    \bigoplus_{ \max \{0, -r\}  \leq u }
    \bigoplus_{\substack{ J \subset I \\ \# J = r + 2u+1 } }
     j_{J,*} \K_{T,X_{c,J}}^{p-r-u}  |_{X_c}[-u]. \\
  \end{align*}
  Let $\Psi \colon X_c^{\Ber} \to X_c$ be the map of taking supports.
  By Theorem \ref{thm,trop,main},
  $A^{p,*}$ is a $\Psi_*$-injective, 
  and 
  we have 
  $$R\Psi_* (A^{p,*}) 
  \cong \Psi_* (A^{p,*}) 
  \cong  A_{K_T}^{p,*} .$$
  The last isomorphism is compatible with the ``monodromy weight'' filtrations. 
  Hence the spectral sequence in Corollary \ref{spectral sequence for ss reduction}
  coincides with 
  the spectral sequence 
  $$E_1^{p,q}= H^{p+q} (X_c, \Gr^{W(M)}_{-p} A_{K_T}^{r,*}  ) 
  \Rightarrow  H^{p+q} (X_c, A_{K_T}^{r,*}) $$
  induced by the ``monodromy weight'' filtration $(W(M)_s A_{K_T}^{r,*} )_s $.
  
  We shall briefly recall monodromy weight filtrations in complex algebraic geometry, see \cite[Section 11.2]{PS08} for details.
  In the following, we assume that $K=\mathbb{C}$.
  Let $\Omega_{X}^* (\log X_c) $ be
  the log holomorphic differential forms  on $X(\CC)$.
  We have a natural increasing filtration $W_* \Omega_{X}^* (\log X_c)$, the weight filtration.
  Then the nearby cycle complex $\psi_\pi \underline{\CC}_{X(\CC)}$ at $c \in C$
  is quasi-isomorphic to the single complex $s(A^{*,*})$ associated to 
  a double complex
  $$A_{\Omega}^{p,q} := (\Omega_{X}^{p+q+1} (\log X_c )/ W_q \Omega_{X}^{p+q+1} (\log X_c )) |_{X_c(\CC)}$$
  ($* \geq 0$)
  with
  \begin{align*}
  d' \colon & A_{K_T}^{p,q } \ni \alpha \mapsto  \frac{ - d (\log t_c)}{2 \pi i} \wedge \alpha \in A_{K_T}^{p,q+1},  \\
  d'' \colon & A_{K_T}^{p,q } \ni \alpha \mapsto d \alpha \in A_{K_T}^{p+1,q}.
  \end{align*}
  We put 
  $ W(M)_r A_{\Omega}^{p,q} $ the image of $W_{r+2q+1} \Omega^{p+q+1}_{X} (\log X_c ) |_{X_c(\CC)}  $ to $A_{\Omega}^{p,q}$, the monodromy weight filtration. 
  
  Recall that 
  by \cite[Proposition 5.5]{M21},
  there exists a natural morphism 
  $$\Phi^*\K_T^p \ni f_1 \wedge \dots \wedge f_p 
  \mapsto  \frac{ - d (\log f_1)  }{2 \pi i  }  \wedge \dots \wedge \frac{ - d  (\log f_p) }{2 \pi i  }  
  \in   \Omega_X^p ,$$ 
  where we put $\Phi \colon X( \CC) \to X$ the natural map.
  It induces a morphism 
  \begin{align}
  H_{\Trop}^{p,q}(X) \to H_{\sing}^{p+q} (X(\CC),\CC) 
  \label{tro coh to sing coh CC}
   \end{align}
  which is compatible with 
  the cycle class map $\CH^p (X) \to  H_{\sing}^{2p} (X(\CC),\CC)$ and 
  the tropical cycle class map 
  $\CH^p(X) \otimes \Q \to H_{\Trop}^{p,p}(X)$ 
  (\cite[Remark 5.6]{M21}).
  This induces  a natural morphism 
  $$\Phi^*\K_{T,X}^p(\log X_c) \to \Omega_X^p (\log X_c),$$
  and hence induces a morphism
  $$\Phi^* A_{K_T}^{p,*}[-p] \to s(A_{\Omega}^{*,*}),$$
  which is compatible with the monodromy weight filtrations.
  Hence we get a morphism 
  from
  $$E_1^{p,q}= H^{p+q} (X_c, \Gr^{W(M)}_{-p} A_{K_T}^{r,*}[-r]  )  
  \Rightarrow  H^{p+q-r} (X_c, A_{K_T}^{r,*}) $$
  to the monodromy weight spectral sequence
  $$E_1^{p,q}= H^{p+q} (X_c (\CC), \Gr^{W(M)}_{-p} s(A_{\Omega}^{*,*})  ) \Rightarrow H^{p+q}(X_{\infty};\mathbb{C}), $$
  where $H^{p+q}(X_{\infty};\mathbb{C})$ is the limit mixed Hodge structure of 
  fibers of $\pi$ at $c \in C$.
  The morphism of $E^1$-terms are given by natural morphisms
  \begin{align*}
   \Gr^{W(M)}_{-p} A_{K_T}^{r,*} [-r]
   \cong & 	
    \bigoplus_{ \max \{ 0, p\} \leq u  }
    \bigoplus_{\substack{ J \subset I \\ \# J = -p + 2u+1 } }
     j_{J,*} \K_{T,X_{c,J}}^{r+p -u} [-r-u] \\
   \to & 	
    \bigoplus_{ \max \{ 0, p\} \leq u  }
    \bigoplus_{\substack{ J \subset I \\ \# J = -p + 2u+1 } }
     j_{J,*} \Phi_* \Omega_{X_{c,J}}^{*} [p-2u] \\
  \cong &
   \Phi_* \Gr^{W(M)}_{-p} s(A_{\Omega}^{*,*}). 
  \end{align*}
  Consequently, we have the following.
  \begin{prp}
  	There is a natural morphism from 
  	the ``monodromy weght'' spectral sequence
  	$$E_1^{p,q}= \bigoplus_r \bigoplus_{ \max \{0,p\} \leq u  }
    \bigoplus_{\substack{ J \subset I \\ \# J = -p + 2u+1 } }
  H^{r+p-u, p+q -u-r}_{\Trop} (X_{c,J})  \Rightarrow  \bigoplus_r H^{r,p+q-r}_{\Trop } (X_{\hat{K}_c}^{\Ber})$$
  to the monodromy weght spectral sequence
  $$E_1^{p,q}=\bigoplus_{ \max \{0,p\} \leq u  }
    \bigoplus_{\substack{ J \subset I \\ \# J = -p + 2u+1 } }
  H^{2p+q -2u}_{\sing} (X_{c,J}(\CC);\Q) (p -u)
    \Rightarrow   H^{p+q}_{\sing } (X_{\infty};\Q),$$
    where $(p-u)$ is the Tate twist.
  Moreover, the morphisms of $E_1$-terms 
  are the direct sums of morphisms
  $$\bigoplus_r H^{r+p-u, p+q -u-r}_{\Trop} (X_{c,J})  \to H^{2p+q -2u}_{\sing} (X_{c,J}(\CC);\Q) $$
  given by natural morphisms
  $\Phi^* \K_{T,X_{c,J}}^{*}[-*] \to  \Omega_{X_{c,J}}^*$.
  \end{prp}
  \begin{rem}
  	In particular, when for $J \subset I$,
  	we have 
  	\begin{align*}
  	H^{p, q }_{\Trop} (X_{c,J} ) & =0  \quad (p \neq q) ,\\
  	H_{\sing}^{2p+1}(X_{c,J}(\mathbb{C} ); \Q  )& =0 \quad  (p \geq 0),  \\
  	 H_{\sing}^{2p}(X_{c,J} (\mathbb{C} ); \Q )  & \cong \CH^p (X_{c,J}) \otimes \Q  \quad 
  	(p \geq 0),
  	\end{align*}
  	we have an isomorphism 
  	$$ H^{p, q }_{\Trop} (X_{\hat{K}_c}) \cong \Gr^W_{2p} H_{\sing}^{p+q}(X_{\infty} ; \Q ).$$
    (These assumptions hold when e.g., $X_{c,J}$ is a smooth projective toric variety, or the wonderful compactification of the complement of a hyperplane arrangement (\cite{CP95}) (see Subsection \ref{subsec example}).)
  	Moreover, since the right-hand side is of $(p,p)$-type, 
    by \cite[Corollary 11.25]{PS08}, 
    we also have 
  	$$ \dim H^{p, q }_{\Trop} (X_{\hat{K}_c}) \cong h^{p,q} (\pi^{-1}(c')),$$
  	for a closed point $c' \in C$ in general position. 
  
  Similar results were previously proved by 
  by Gross-Siebert (\cite{GS10}) for  certain (possibly non-semi-stable) toric degenerations of Calabi-Yau varieties using their cohomology of 
  certain tropical affine manifolds with singularities,
  and 
  by 
  Itenberg-Katzarkov-Mikhalkin-Zharkov (\cite{IKMZ19}) 
  for maximally degenerate smooth projective varieties 
  having smooth (i.e., locally matroidal) tropicalizations
  using tropical cohomology of the tropicalizations.
  \end{rem}
    
\subsection*{Acknowledgements}
 The author would like to express his sincere thanks to his adviser Tetsushi Ito for  his invaluable advice and persistent support.
 The author also thanks Yuji Odaka, Masanori Kobayashi, Hiroshi Iritani, Kazuhiko Yamaki, Sam Payne, Walter Gubler, Klaus K\"{u}nnemann for their comments and encouragement.
 The author also thanks  Kazuhiro Ito for answering the author's questions on references and his helpful comments on this paper.
 The author thanks Philipp Jell for letting the author know his paper.
 This work was supported by JSPS KAKENHI Grant Number 18J21577.
 Parts of this paper was written when the author is at Academia Sinica. 
 He would like to express his sincere gratitude to his mentor Yuan-Pin Lee for his thoughtful encouragement.

\end{document}